\theoremstyle{plain}
\newtheorem{theo}{Theorem}[section]
\newtheorem{cor}[theo]{Corollary}
\newtheorem{proposition}[theo]{Proposition}
\newtheorem{theorem}[theo]{Theorem}
\newtheorem{lemma}[theo]{Lemma}
\newtheorem{remark}[theo]{Remark}
\newtheorem{fact}[theo]{Fact}
\newtheorem{definition}[theo]{Definition}
\newtheorem{sublemma}[theo]{Sublemma}
\newcommand{\lieo}{\ensuremath{\mathfrak{o}}}
\newcommand{\liek}{\ensuremath{\mathfrak{k}}}
\newcommand{\liep}{\ensuremath{\mathfrak{p}}}
\newcommand{\lieg}{\ensuremath{\mathfrak{g}}}
\newcommand{\lien}{\ensuremath{\mathfrak{n}}}
\newcommand{\liel}{\ensuremath{\mathfrak{l}}}
\newcommand{\liea}{\ensuremath{\mathfrak{a}}}
\newcommand{\liem}{\ensuremath{\mathfrak{m}}}
\newcommand{\lieh}{\ensuremath{\mathfrak{h}}}
\newcommand{\lies}{\ensuremath{\mathfrak{s}}}
\newcommand{\RR}{\ensuremath{{\Bbb R}}}
\newcommand{\re}{\ensuremath{{\mathbb
R}}}
\theoremstyle{definition}
\newtheorem{claim}[theo]{Claim}
\theoremstyle{remark}
 \def\RR{{\mathbb R}}
\begin{document}

\title[Actions of Lie groups]{Actions of semisimple Lie groups preserving a degenerate Riemannian metric}

\author[E. Bekkara]{E. Bekkara$^\star$} 
\thanks{${}^\star$ Partialy supported by the project CMEP 05 MDU 641B of the Tassili program. }

 \address{${}^\star$ ENSET-Oran, Algeria}  
\email{esmaa.bekkara@gmail.com}

\author[C. Frances]{C. Frances$^\dagger$}
\address{${}^\dagger$Laboratoire de
Math\'ematiques, Univ. Paris Sud.}
\email{charles.frances@math.u-psud.fr} 

\author[A. Zeghib]{A. Zeghib$^\ddagger$}
 \address{${}^\ddagger$CNRS, UMPA, \'Ecole
Normale Sup\'erieure de Lyon.}
\email{zeghib@umpa.ens-lyon.fr}


\keywords{Lighlike metric, lightcone, isotropic direction}
\subjclass{53B30, 53C22, 53C50}
\date{\today}

\setcounter{tocdepth}{1} 

 \maketitle

\begin{abstract}   
We prove a rigidity of the lightcone in Minkowski space. It is (essentially) the  unique space endowed with a degenerate Riemannian metric, of lightlike type, and supporting  an isometric non-proper action of a semi-simple Lie group.


\end{abstract}



\section{Introduction}

Our subject of study here is {\bf lightlike} metrics on smooth manifolds.
First, a lightlike scalar product on a vector space $E$ is a symmetric  bilinear form  $b$ which is positive but non-definite, and  has exactly a 
1-dimensional kernel. If $E$ has dimension $1+n$, then, in some linear coordinates $(x^0, x^1, \ldots, x^n)$, the associated quadratic form 
$q$ can be written $q = (x^1)^2+ \ldots + (x^n)^2$. 
 Now, a lightlike metric $h$  on a manifold $M$ 
is a smooth tensor which  is a lightlike scalar product on the tangent space of each point.

\subsubsection{Characteristic foliation}    The Kernel of $h$ is a  1-dimensional sub-bundle $N \subset TM$, and thus determines a 1-dimensional foliation ${\mathcal N}$, called the {\bf characteristic} (or null, normal, radical, isotropic...) foliation of $h$.  By definition, any null  curve (i.e. a curve  with  everywhere
 isotropic speed)  of $(M, h)$ trough $x$ 
is contained in the {\it null  leaf}   ${\mathcal N}_x$.  
The (abstract) normal bundle of ${\mathcal N}$, i.e. the quotient $TM/N$ is a Riemannian  vector bundle. Conversely, a lightlike metric consists in giving a 1-dimensional foliation together with a Riemannian metric on its normal bundle.

\subsection{Major  motivations} 

 Lightlike geometry appears naturally in a lot of geometric situations. We list now some natural examples motivating their study.
 
\subsubsection{Submanifolds of Lorentz manifolds}
 Let $M$ be a submanifold in 
 a Lorentz manifold $(V, g)$. The metric $g$ is non-degenerate with signature $-+ \ldots+$.  However, for  a given $x \in M$, the restriction 
 $h_x$ of $g$ to $T_xM$ has not necessarily the same signature.  Two  easy stable situations are those where $h_x$
 is everywhere of Riemannian type ($M$ is spacelike), or $h_x$ is everywhere of Lorentzian type
 ($M$ is timelike). In both  cases, all the submanifold theory 
 valid in the Riemannian context generalizes: shape operator, Gauss and Codazzi equations...
 
  The delicate situation is when $h_x$ is degenerate for any $x$. Because the ambient metric has Lorentz signature,   
 $h_x$ is then lightlike as defined above. Unfortunately, by opposition to the previous cases, these lightlike submanifolds are generally ``to poor"  to generate a coherent extrinsic local metric differential geometry. Let us give examples of interesting  lightlike submanifolds: 
 
 $\bullet$ {\it Horizons of domains of dependence and black holes.} Unfortunately, they have an essential disadvantage:  their lower smoothness. One can believe that smooth horizons are sufficiently rigid to be classifiable (see for instance \cite{Bonsante, Pio, Hawking}).
  
  $\bullet$  {\it Characteristic hypersurfaces of the wave equation.}  There is a nice interpretation of lightlike hypersurfaces in terms of propagation of waves:  a hypersurface is degenerate iff  it is characteristic for the wave equation (on the  ambient Lorentz space) \cite{Friedlander}.  These hypersurfaces enjoy the  nice property that their null curves are geodesic in the ambient space (this is 
  not true for submanifolds of higher codimension). However, no deeper study of their extrinsic geometry seems to be available in the literature.

 $\bullet$ {\it Lightlike geodesic hypersurfaces.} They are characterized 
 by the fact that  their lightlike metrics are basic (see the example \ref{transversalement.riemannien}). They inherit a connection from the ambient space. See \cite{Dambra-Gromov, DMZ, Zeghib.Commentari, Zeghib.GAFA2}, for their use in Lorentz dynamics. 
 
$\bullet$ {\it Degenerate orbits of Lorentz isometric actions.} Let $G$ be a Lie group acting isometrically on a Lorentz manifold $(V,g)$. Then, any orbit which is lightlike at a point is lightlike everywhere, hence yields an embedded lightlike submanifold in $V$.  The problem of understanding these lightlike orbits, and more generally degenerate invariant submanifolds, is essential when studying such  isometric actions.

$\bullet$ {\it Terminology.} We believe that the choose of the  word ``lightlike'' here is widely justified  from the  relationship between lightlike submanifolds and fields on one hand, and geometric as well as physical optics in general  Relativity  on the other hand (see for instance \cite{R-T}). We also think this terminology is naturally adapted 
to our situation here, but less for the general situation of ``singular 
pseudo-Riemannian'' metrics (compare with \cite{Dug, Kup}).

\subsubsection{From submanifolds to intrinsic lightlike geometry.}

In the last example given above, when restricting the action of the Lie group $G$ to a lightlike orbit, we are led to study the isometric action of $G$ on a lightlike submanifold in a Lorentzian manifold.  In fact, one realizes that the submanifold structure is irrelevant in this problem, and the pertinent framework is that of isometric actions on abstract lightlike manifolds.    

The main difficulty when dealing with this intrinsic formulation is that we loose the rigidity of the ambient action: as we will see below, the isometry group of a lightlike manifold can be infinitely dimensional.



\subsection{Two fundamental examples} 

We give now two important examples of lightlike geometries, which are in some sense antagonistic.

\subsubsection{The most flexible example: transversally Riemannian flows}



\label{transversalement.riemannien}

 The linear situation reduces
to the  case of ${\Bbb R}^{0,n}$, i.e  ${\Bbb R}^{1+n}$  with coordinates $(x^0, x^1, \ldots, x^n)$, 
   endowed with the lightlike  quadratic form $q=(x^1)^2 +
\ldots + (x^n)^2$.  

We will denote its (linear) {\bf orthogonal group}  by  $O(0, n)$ (this is somehow natural since reminiscent of the notation $O(1, n)$). 
We have: 

$$O(0,n)=\left\{ \left(
\begin{array}{cc}
\lambda  &
\begin{array}{ccc}
a_{1} & ... & a_{n}%
\end{array}
\\
\begin{array}{c}
0 \\
. \\
. \\
0%
\end{array}
& A%
\end{array}%
\right) \in GL(1+n,\mathbb{R}),\  A\in O(n)\right\}
$$  

It is naturally isomorphic to the affine similarity group 
$ {\mathbb R} \times Euc_n =  {\mathbb R}.O(n)\ltimes \mathbb{R}^{n}$ (here 
$Euc_{n}=O(n)\ltimes \mathbb{R}^{n}$ is  the group of rigid motions of
the Euclidian space of dimension $n$).

Let us now  see ${\mathbb R}^{1+n}$ as a lightlike manifold. 
The group  of its affine isometric transformations is $O(0, n) \ltimes {\mathbb R}^{1+n}$.

$\bullet$ Contrary to the non-degenerate case, {\it there is  here  a huge 
group (infinitely dimensional) of non-affine isometries}.  Take any
$$\psi: (x^0, x^1, \ldots x^n) \mapsto (\psi _{1}(x^0, x^1, \ldots, x^n), \psi_2(x^{1},..,x^{n}) ),$$
where $\psi _2 \in
Euc_{n}$, and 
 $\psi _{1}:
\mathbb{R}^{n+1} \to \mathbb{R}$  is a smooth function with 
$\frac{\partial \psi_1 }{\partial x^0} \neq 0$ (in order to get a diffeomorphism). 

More generally, let $(L, g)$ be a Riemannian manifold, and  $M = {\mathbb R} \times L$ endowed 
with the  lightlike metric $0 \oplus g$, that is the null foliation is given by the 
${\mathbb R}$-factor, and the metric does not depend on the coordinate along it. Then, we have also here an infinitely dimensional group 
of isometric  transformations given by: 
$ \psi: (t, l) \in {\mathbb R} \times L \mapsto ( \psi_1(t, l), \psi_2(l))$, where $\psi_2$ is an isometry of $L$, e.g. $\psi_2$ the identity map, and $\frac{\partial \psi_1 }{\partial t} \neq 0$ . 

 Conversely, assume that  the lightlike metric $(M, h)$ is  such that 
there exists a non-singular vector field $X$ tangent to the characteristic 
foliation, which flow preserves $h$ (equivalently, the Lie derivative $L_Xh =0$). Then, locally, there is  a metric splitting $M = {\mathbb R} \times L$ as above.  Observe in fact that any vector field collinear to $X$ will preserve $h$ too, in other words any vector field orienting the characteristic foliation ${\mathcal N}$ preserves $h$.  Let us call the lightlike metric {\bf basic}  in this case (they can also be naturally called,  locally product, or stationary). This terminology is  justified by the fact that 
$h$ is the pull-back by the projection map $M \to L$ of the Riemannian metric on the basis $L$. 

$\bullet$ Recall the classical notion from  the geometric theory foliations: a 1-dimensional foliation ${\mathcal N}$ on a manifold $M$ is  transversally Riemannian (one then says ${\mathcal N}$ is a transversally Riemannian flow), if it is the characteristic 
foliation of some  lightlike metric $h$ on $M$, which is preserved by (local) vector fields tangent to ${\mathcal N}$. Therefore this data is strictly equivalent to giving a locally basic lightlike metric on $M$. Of course, the usual classical definition does not involve lightlike metrics. 
 
 There is a well developed theory of 
transversally Riemannian foliations,  with sharp conclusions in the 1-dimensional case  \cite{Car2, Mol}...

The isometry
group of a basic lightlike metric  contains at least all flows tangent to ${\mathcal N}$
which form an  infinitely dimensional group (surely not so
beautiful). However, these metrics are 
somehow tame, since, at least locally, the metric is encoded in an associated Riemannian one.  Moreover, it t was proved 
by D. Kupeli \cite{Kup} (and reproduced in many other places) that some kinf of Levi-Civita connection exists exactly if   the lightlike metric is basic. The connection is never unique, and so enrichment of the structure is  always in order. Actually, the most useful additional   structure is that of a {\bf screen}, mostly developed  in \cite{Dug}, which allows to develop ``calculus'', and get sometimes invariant quantities (see for instance \cite{Gol}). Nevertheless, there is generally no distinguished screen left invariant by the isometry group, so that this notion will not be helpful for us.

\subsubsection{The example  of the Lightcone in Minkowski space}
 \label{lightcone.example} 

We are going now to consider an opposite situation, where the  isometry group is ``big", though remaining finitely dimensional.  
 Let $Min_{1, n}$ be  the Minkowski space
of dimension $1+n$, that is ${\Bbb R}^{1+n}$ endowed with the form
$q= -x_0^2 + x_1^2+ \ldots x_n^2$. The isotropic (positive) cone, or {\it lightcone}, 
$Co^n$ is the set $\{q (x)= 0, \; x_0 >0\}$.  The metric induced by
$q$  on $Co^n$  is lightlike.  The  group $O^+(1, n)$ (subgroup of
$O(1,n)$ preserving the positive cone) acts isometrically on $Co^n$.
This action is in fact transitive so that $Co^n = O^+(1, n) /
Euc_{n-1}$ becomes a lightlike homogenous space, with isotropy group
$Euc_{n-1} = O(n-1) \ltimes {\Bbb R}^{n-1}$, the group of rigid
motions of the Euclidean space of dimension $n-1$.

A key observation is:

\begin{theorem} (Liouville Theorem for lightlike geometry)  \label{isometry.cone} For, $n \geq 3$, any isometry of $Co^n$ belongs to $O^+(1, n)$. In fact, this is true
even locally for $n \geq 4$: any isometry between two connected open
subsets of $Co^n$ is the restriction of an element of
$O^+(1, n)$.\\
$\bullet$ For $n=3$, the group of local isometries is in one-to-one correspondence with the group of local conformal transformations of ${\bf S}^2$.\\
$\bullet$ For $n=2$, there is no rigidity at all, even globally,
since to any diffeomophism of the circle corresponds  an isometry of
$Co^2$.

\end{theorem}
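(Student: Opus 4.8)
The plan is to reduce everything to the classical Liouville theorem of \emph{conformal} geometry by realizing the lightcone as the metric cone over the round sphere. First I would fix convenient coordinates: writing a point of $Co^n$ as $x=r\,(1,\omega)$ with $r>0$ and $\omega$ in the unit sphere $S^{n-1}$ of the spatial $\RR^n$, a direct computation of the restriction of $q$ gives that the induced lightlike metric is $h=r^2\,g$, where $g$ denotes the round metric of $S^{n-1}$ pulled back by $(r,\omega)\mapsto\omega$. Thus $(Co^n,h)$ is isometric to $(\RR_{>0}\times S^{n-1},\,r^2g)$, its characteristic foliation $\mathcal N$ is the foliation by rays $\{\omega=\text{const}\}$ (the radial field $\partial_r$ spans $\ker h$), and the transverse Riemannian structure on $TCo^n/N$ is $r^2g$. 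Note this metric is \emph{not} basic, since $L_{\partial_r}h=2r\,g\neq0$; this is exactly what places the example on the rigid side of the dichotomy set up in the introduction.

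Next I would analyse an arbitrary (local) isometry $\phi$. Since $\ker h$ is intrinsic, $\phi$ preserves $\mathcal N$, carries leaves to leaves, and descends to a (local) diffeomorphism $\psi$ of the leaf space $S^{n-1}$. Writing $\phi(r,\omega)=(\rho(r,\omega),\psi(\omega))$, the point is that $\psi$ is independent of $r$ because the whole ray over $\omega$ is sent into the ray over $\psi(\omega)$. Imposing $\phi^*h=h$ and using that $\psi$ involves only the $\omega$-variables gives $\phi^*h=\rho^2\,(\psi^*g)$, whence the single identity $\rho(r,\omega)^2\,(\psi^*g)=r^2\,g$ of symmetric forms on the base directions. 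In particular $\psi^*g$ is a pointwise scalar multiple of $g$, i.e. $\psi$ is \emph{conformal}; writing $\psi^*g=e^{2u(\omega)}g$ and comparing factors forces $r^2/\rho^2=e^{2u(\omega)}$, so the lift is rigid: $\rho(r,\omega)=e^{-u(\omega)}\,r$. Conversely every conformal $\psi$ yields, through this formula, an isometry of $h$. This establishes an isomorphism between the (local) isometries of $Co^n$ and the (local) conformal transformations of $S^{n-1}$, under which the linear action of $O^+(1,n)$ corresponds precisely to the M\"obius subgroup, since $O^+(1,n)$ acts on the ray space $S^{n-1}$ as its M\"obius group.

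Finally I would feed this correspondence into the known structure of the conformal group of $S^{n-1}$, which produces exactly the stated trichotomy. For the global statement ($n\geq3$, so $\dim S^{n-1}=n-1\geq2$) the \emph{global} conformal group of $S^{n-1}$ is the M\"obius group $O^+(1,n)$ --- including $n=3$, where the global conformal automorphisms of $S^2$ are $PSL_2(\CC)$ together with the antiholomorphic ones --- so every global isometry of $Co^n$ lies in $O^+(1,n)$. For local maps one invokes the classical Liouville theorem: when $n-1\geq3$, i.e. $n\geq4$, every local conformal transformation of $S^{n-1}$ is the restriction of a M\"obius transformation, giving local rigidity. When $n-1=2$, i.e. $n=3$, the local conformal transformations of $S^2$ form the infinite-dimensional (anti)holomorphic pseudogroup, and the correspondence above is precisely the asserted one-to-one correspondence. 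When $n-1=1$, i.e. $n=2$, a one-dimensional conformal structure is no structure at all, so every diffeomorphism of $S^1$ is ``conformal'' and lifts to an isometry of $Co^2$, destroying rigidity even globally.

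As for the main difficulty: the conceptual steps are light, and the genuine external input is the classical Liouville theorem on conformal maps, which may simply be cited. The point requiring care is the reduction itself --- showing that \emph{every} isometry, assumed only to be a diffeomorphism preserving $h$, necessarily takes the normal form $(r,\omega)\mapsto(e^{-u(\omega)}r,\psi(\omega))$ with $\psi$ conformal --- together with, in the local statement, checking that the leaf-space map $\psi$ is well defined and $r$-independent on a neighbourhood on which the rays appear only partially. Once that normal form is secured, the trichotomy is merely a matter of comparing $\dim S^{n-1}$ with the dimension threshold in Liouville's theorem.
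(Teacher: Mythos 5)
Your proposal is correct and is essentially the paper's own proof: both realize the cone as a product over the sphere with metric conformal to the round one along the rays (the paper uses the coordinate $t=\log r$, writing $h = 0 \oplus e^{2t}g_{{\bf S}^{n-1}}$ on ${\mathbb R}\times {\bf S}^{n-1}$), derive the normal form of an isometry $(t,x)\mapsto (t-\mu(x),\phi(x))$ with $\phi^*g_{{\bf S}^{n-1}}=e^{2\mu}g_{{\bf S}^{n-1}}$ --- identical to your $(r,\omega)\mapsto(e^{-u(\omega)}r,\psi(\omega))$ --- and conclude from classical conformal rigidity of the sphere. Your write-up is somewhat more explicit about why an isometry must preserve the characteristic foliation and descend to the leaf space, but the reduction and the appeal to the Liouville theorem are the same.
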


This theorem, which will be proved in  \S  \ref{preliminaries},  shows in particular  that for $n \geq 3$, $Co^n$ is a homogeneous lightlike manifold 
with  isometry group    $O(1,n)$. 
Remark that for the sake of simplicity, we will often use the notation $O(1, n)$
to mean its identity component $SO_0(1, n)$, and generally any finite index subgroups of $O(1, n)$. Actually, to be precise, we can say that our geometric descriptions of objects are always given {\it up to a finite cover}. 

 It seems likely that  being homogeneous and having a maximal isotropy $O(0, n-1)$ characterizes the flat case, i.e. ${\mathbb R}^{0,n-1}$, and 
having a maximal unimodular isotropy, i.e. $Euc_{n-1}$, characterizes 
the lightcone. In some sense the lightcone is the maximally symmetric 
non-flat lightlike space, analogous to spaces  of constant non-zero curvature in the pseudo-Riemannian case.

\subsection{Statement of results} The present article contains in particular detailed  proofs of the results announced in  \cite{BFZ1}. Before giving the statements, let us recall that two (lightlike) metrics $h$ and $h^{\prime}$ on a manifold $M$ are said to be {\it homothetic} if $h=\lambda h^{\prime}$, for a real $\lambda >0$. A Lie group acts {locally faithfully} on $M$ if  the kernel of the action is a discrete subgroup.

One motivation of the present work was Theorem 1.6 of \cite{DMZ}, that we state here as follows:

\begin{theorem}[\cite{DMZ}]
Let $G$ be a connected group with finite center, locally isomorphic to $O(1,n)$ or $O(2,n)$, $n \geq 3$. 
 If $G$ acts isometrically on a Lorentz manifold, and has a degenerate orbit with non-compact stabilizer, then $G$ is locally isomorphic to $O(1,n)$, and the orbit is homothetic to the lightcone $Co^n$.
\end{theorem}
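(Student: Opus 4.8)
The plan is to discard the ambient manifold as quickly as possible and to study the degenerate orbit intrinsically. Put $M_0 = G\cdot x = G/H$ with $H = G_x$. Because $g$ is Lorentzian, its restriction to any degenerate subspace has a one-dimensional radical (the Witt index being one), so the metric induced on $M_0$ is lightlike in the sense of the introduction, $G$ acts on $(M_0,h)$ by isometries, and $H$ is non-compact by hypothesis. Everything then reduces to deciding which homogeneous lightlike spaces $G/H$ can occur when $\lieg \in \{\lieo(1,n),\lieo(2,n)\}$ and $H$ is non-compact, and to recognizing the answer as the cone.

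The tool is the linear isotropy. Since $G$ acts by isometries of the non-degenerate ambient metric $g$, an element of $H$ fixing $x$ with trivial differential at $x$ is trivial near $x$ (one-jet rigidity via the Levi-Civita connection of $g$); hence $\tilde\rho\colon H\to O(g_x)\cong O(1,N)$ is locally faithful and $\lieh$ embeds into $\lieo(1,N)$. As $H$ preserves $T_xM_0$ and $g_x$, it preserves the null line $\ell=\mathrm{rad}(g_x|_{T_xM_0})$, so $\tilde\rho(H)$ lies in the parabolic $P\subset O(1,N)$ stabilizing $\ell$, namely the similarity group $(\re_{>0}\times O(N-1))\ltimes \re^{N-1}$. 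The induced action on $T_xM_0$ preserves the lightlike form $g_x|_{T_xM_0}$, giving $H\to O(0,k)$ with $k+1=\dim M_0$. The benchmark is the cone of Example \ref{lightcone.example}: there $H=Euc_{n-1}$ sits in $O(0,n-1)$ with trivial scaling factor, non-compact through its translation (null-rotation) part, and this is exactly the feature that places us in the rigid rather than the basic/flexible regime of Example \ref{transversalement.riemannien}.

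The heart is then a Lie-algebraic classification, which I expect to be the main obstacle: among the non-compact subalgebras $\lieh\hookrightarrow\lieo(1,N)$ landing in $\liep$ and preserving a lightlike subspace, determine which occur as isotropy of $\lieo(1,n)$ or $\lieo(2,n)$ and admit an $H$-invariant lightlike form on $\lieg/\lieh$. The delicate point is to exclude $\lieo(2,n)$: computing the weights of the isotropy representation, one should show that the unipotent radical and the higher-rank directions of a non-compact isotropy coming from $O(2,n)$ cannot be fitted into the abelian unipotent part $\re^{N-1}$ of the rank-one parabolic $P$ while leaving an invariant lightlike form (positive, with one-dimensional kernel) on a quotient of the right dimension, so that no such degenerate orbit exists. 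For $\lieo(1,n)$ the same analysis should leave only the pair $(\lieo(1,n),\mathfrak{euc}_{n-1})$, with the standard $Euc_{n-1}$-module structure on $\lieg/\lieh$ and $\dim M_0=n$.

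Once the pair is pinned to $(O(1,n),Euc_{n-1})$ and the $H$-module $\lieg/\lieh$ is identified, the $G$-invariant lightlike metrics on $G/H$ correspond to the $H$-invariant lightlike forms on $\lieg/\lieh$, which by a Schur-type argument form a single positive ray; hence $(M_0,h)$ is, up to homothety, the standard homogeneous space $O^+(1,n)/Euc_{n-1}=Co^n$. Finally I would appeal to the Liouville Theorem \ref{isometry.cone}: for $n\ge 3$ the cone is rigid, so this infinitesimal identification integrates to a global homothety $M_0\to Co^n$ intertwining the $G$-action with that of $O^+(1,n)$, and in particular $G$ is locally isomorphic to $O(1,n)$. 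This is the assertion.
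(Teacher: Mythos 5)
Your opening reduction is sound and matches the paper's own point of view: the restriction of the ambient Lorentz metric to the degenerate orbit is a lightlike metric, $G$ acts on it isometrically, and the non-compact stabilizer makes this action non-proper; indeed the paper quotes this statement from \cite{DMZ} only as motivation and derives it from the intrinsic generalization, Theorem \ref{simple.transitive}, by exactly this first step. The problem is that in your proposal everything after this step is announced rather than proved. The entire content of the theorem sits in what you yourself call ``the heart'' and ``the main obstacle'': showing that $\lieo(2,n)$ admits \emph{no} non-compact isotropy $\lieh$ whose adjoint action on $\lieg/\lieh$ preserves a lightlike form, and that in $\lieo(1,n)$ the only possibility is $\mathfrak{euc}_{n-1}$ in its standard embedding. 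Your plan for this (``computing the weights of the isotropy representation, one should show that \dots cannot be fitted \dots'') is not an argument. Moreover the handle you propose --- that $\tilde\rho(H)$ lies in the rank-one parabolic of the ambient $O(1,N)$ --- gives essentially no grip, since $N$ is the unknown, unbounded ambient dimension; the only usable constraint is the intrinsic one, $Ad(H)\subset O(0,k)$ on $\lieg/\lieh$, and classifying the non-compact subalgebras satisfying it is precisely the hard problem left open. So the proof has a genuine gap at its central step.

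For comparison, the paper's mechanism for that step is dynamical rather than a direct subalgebra classification. Non-properness feeds Kowalsky's argument (Proposition \ref{Kowalsky}): there exist $p$ and $X$ in a Cartan subalgebra with $W^s_X$ isotropic at $p$. Combining Lemma \ref{nilpotantstabilizer} (brackets against stabilizer nilpotents land in $\lies_p$, resp.\ $\lieg_p$) with Lemma \ref{centerfact} ($\liea \cap \lieg_p = 0$ in the absence of local $\mathfrak{sl}(2,\re)$ factors) then forces, in order: $W^s_X \subset \lieg_p$; the real rank of $\lieg$ to be $1$ --- this single step is what eliminates $O(2,n)$, whose real rank is $2$, because the coroots $T_\alpha, T_{\alpha+\beta}$ of two adjacent roots would be two independent elements of $\liea$ isotropic at $p$, so some nontrivial combination would lie in $\lieg_p$; the root system to be reduced, hence $\lieg \cong \lieo(1,n)$, via $[\lieg_{2\alpha},\lieg_{-\alpha}] \neq 0$; and finally $\lieg_p = \liem \oplus \lieg_{\alpha} \cong \mathfrak{euc}_{n-1}$, whence the orbit is the cone. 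Any rescue of your approach must supply an algebraic substitute for these three exclusions. Two smaller points: you never rule out that the degenerate orbit is $1$- or $2$-dimensional (where the conclusion fails or the lightlike structure degenerates; the paper excludes this because only $\mathfrak{sl}(2,\re)$ acts faithfully on a $1$-manifold, cf.\ Lemma \ref{reduction.transitive}), and your closing appeal to Theorem \ref{isometry.cone} is unnecessary --- once the pair is $(O(1,n), Euc_{n-1})$ with invariant metric unique up to scale, homogeneity alone identifies the orbit with $Co^n$ up to homothety.
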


 Here, we prove an intrinsic version of this result: 


\begin{theorem} \label{simple.transitive}
Let $G$ be a non-compact semi-simple Lie group  with  finite center   acting locally faithfully,
isometrically and {\bf non-properly} on a  lightlike
manifold $(M, h)$.  Assume that $G$ has no factor locally isomorphic to $SL(2,\re)$. Then, looking if necessary at a finite cover of $G$:

\begin{itemize} 
\item{$G=H \times H^{\prime}$, where $H$ is locally isomorphic to $O(1,n)$.}
\item{ $G$ has an orbit which is  homothetic, up to a  finite cover,  to a metric product $Co^n \times N$, where $N$ is a Riemannian $H^{\prime}$-homogeneous manifold. The action of $H \times H^{\prime}$ on $Co^n \times N$ is the product action.}
\end{itemize}
\end{theorem}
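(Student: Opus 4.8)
The plan is to extract from non-properness a precise dynamical mechanism, read it off on tangent spaces through the linear isometry group $O(0,m)$ of the lightlike form (here $m=\dim M-1$), and then recognize in it the restricted-root structure of $O(1,n)$. Since the action is non-proper, I would first fix a compact $K\subset M$ and a sequence $g_k\to\infty$ in $G$ with $g_kx_k,x_k\in K$; after extracting subsequences, $x_k\to p$ and $g_kx_k\to q$. Writing $g_k=k_ka_k\ell_k$ in a Cartan ($KAK$) decomposition, the divergence is carried by $a_k\to\infty$ in a closed Weyl chamber of a maximal $\re$-split torus $A$, while $k_k,\ell_k$ stay bounded. The whole geometric content of the non-properness is then encoded in how the one-parameter directions of $A$ expand and contract $TM$ near $p$.

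The key linear constraint is that every differential $D_xg$ is a linear isometry of lightlike forms, hence conjugate into $O(0,m)\cong(\re\times O(m))\ltimes\re^m$, whose semisimple Levi part is the \emph{compact} group $O(m)$. Consequently the divergence of $D_{x_k}g_k$ cannot be absorbed by the $O(m)$-factor and must live in the solvable part, i.e.\ in the $\re$-factor (scaling of the null line $N_p\subset T_pM$) together with the unipotent translations $\re^m$. I would show that this forces the expansion of $a_k$ to be governed by a single positive restricted root whose root spaces fit the pattern $\mathfrak{g}_{-\alpha}\oplus\mathfrak{g}_0\oplus\mathfrak{g}_{\alpha}$ of a real rank one factor, with $\mathfrak{g}_{\pm\alpha}$ abelian of dimension $n-1$ --- exactly the structure of $O(1,n)$. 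Here the hypothesis that $G$ has no factor locally isomorphic to $SL(2,\re)$ (equivalently to $O(1,2)$) rules out the degenerate case $Co^2$, for which there is no rigidity at all by Theorem \ref{isometry.cone}, and guarantees $n\geq 3$.

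Having isolated a rank one factor $H\cong O(1,n)$, I would next produce an actual orbit modeled on the lightcone. The null line field carries an $\re$-action coming from the $A$-direction of $H$ (the null scaling), and $Co^n=O^+(1,n)/(O(n-1)\ltimes\re^{n-1})$ is precisely the homogeneous space obtained by letting $H$ act with isotropy the parabolic-minus-$A$ subgroup $MN$. I would identify a point whose $H$-orbit has non-compact stabilizer conjugate to $MN$, so that the orbit is $H$-equivariantly diffeomorphic to $Co^n$; the Liouville rigidity of Theorem \ref{isometry.cone} (valid since $n\geq 3$) then shows the induced lightlike metric on it is homothetic to that of $Co^n$. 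The commuting factor $H'$, being semisimple and contributing no null scaling, must act by transverse Riemannian isometries, hence properly; one then shows the full orbit splits as a metric product $Co^n\times N$ with $N$ an $H'$-homogeneous Riemannian manifold, and $G=H\times H'$ up to finite cover.

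The step I expect to be the main obstacle is exactly the passage from the infinitesimal, derivative-level information to the global orbit structure, because lightlike geometry is \emph{not} rigid: as the basic (transversally Riemannian) example shows, the isometry group is infinite dimensional and isometries are not determined by their jets, so the isotropy representation can have a large kernel and one cannot linearize naively. The delicate point is to separate the essential non-properness (the null scaling genuinely tied to the semisimple $A$-action) from the infinite-dimensional gauge freedom of reparametrizing the characteristic foliation $\mathcal N$, and to show a semisimple group with no $SL(2,\re)$ factor cannot produce non-properness purely through this flexible part. Upgrading the infinitesimal $O(1,n)$-pattern to an honest lightcone orbit --- presumably via an integrability/holonomy argument along the leaves of $\mathcal N$ --- is where the real work lies.
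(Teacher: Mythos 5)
Your setup (non-properness, $KAK$ decomposition, the observation that derivatives land in $O(0,m)$ whose Levi part is compact, so the divergence must be carried by the solvable part) is exactly the Kowalsky-type starting point of the paper (Lemma \ref{action de l'algebre de cartan} and Proposition \ref{Kowalsky}). But everything after that is asserted, not proved, and the assertion is precisely the theorem. The sentence ``I would show that this forces the expansion of $a_k$ to be governed by a single positive restricted root \dots exactly the structure of $O(1,n)$'' is the entire content of the paper's Sections 3--4, and the solvability constraint alone cannot deliver it: higher-rank groups and the rank-one groups $SU(1,k)$, $Sp(1,k)$, $F_4^{-20}$ all admit divergent sequences whose derivatives also lie in $O(0,m)$, so something must rule them out. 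In the paper this requires a chain of genuinely new ingredients you do not supply: (i) Lemma \ref{centerfact}, that $\liea\cap\lieg_p=0$ --- and note this is where the no-$SL(2,\re)$ hypothesis really enters, via the fact that an ideal generated by $W^s_X$ and $W^u_X$ would act faithfully on the $1$-dimensional null leaf and only $\mathfrak{sl}(2,\re)$ can do that; your reading of the hypothesis as merely ``ruling out $Co^2$'' misses this mechanism; (ii) Proposition \ref{Key}, which uses Jacobson--Morozov and the classification of $SL(2,\re)$-homogeneous lightlike surfaces (Proposition \ref{PSL(2,R)surfaces}, Fact \ref{2-orbites}) to produce a point where $X_0$ \emph{and} $W^s_{X_0}$ are simultaneously isotropic; (iii) Step 1, that $W^s_X\subset\lieg_p$, via the two possible actions of $\mathfrak{aff}(\re)$ on a $1$-manifold; (iv) Step 2, rank one, via adjacent roots in the Dynkin diagram and the dual vectors $T_\alpha$, $T_{\alpha+\beta}$ becoming isotropic; (v) Step 3, excluding the non-reduced root system $\{\alpha,2\alpha\}$, via the claim $[\lieg_{2\alpha},\lieg_{-\alpha}]\neq 0$ and the Cartan involution. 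Your proposal contains no substitute for any of these steps.

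The last part of your argument is also hand-waved at the point where work is needed. That $H'$ ``contributes no null scaling, must act by transverse Riemannian isometries, hence properly'' and that ``one then shows the full orbit splits as a metric product'' is the paper's Proposition \ref{produit}, whose proof needs Lemma \ref{commute} (an isometry of $Co^d$ commuting with $O(1,d)$ is the identity --- this is what forces $H'.p$ to be Riemannian) and an isotropy-representation argument (irreducibility of the $O(n-1)$-action on a complement) to get the orthogonal splitting; neither is automatic, precisely because lightlike isometry groups are flexible. Finally, your proposed patch for the metric identification --- invoking the Liouville rigidity of Theorem \ref{isometry.cone} --- is aimed at the wrong statement: what is needed is uniqueness up to homothety of the $O(1,n)$-\emph{invariant} lightlike metric on the homogeneous space $O(1,n)/Euc_{n-1}$, which follows from the isotropy representation, not from rigidity of isometries of the standard cone. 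Your own closing paragraph concedes that the passage from infinitesimal data to the orbit structure is unresolved (and the ``integrability/holonomy along $\mathcal N$'' route you gesture at is not what is done; the paper instead computes $\lieg_p=\liem\oplus\lieg_{\alpha}$ exactly and recognizes the homogeneous space). So the proposal identifies the correct entry point but has a genuine gap at the theorem's core.
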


 Using this theorem and working a little bit more, we can also handle the case where some factors of $G$ are locally isometric to $SL(2,\re)$, when the action is transitive. The following result can be thought  as  a converse to Theorem \ref{isometry.cone}:
 
 \begin{cor}
 \label{simple.homogeneous}
 Let $G$ be a non-compact semi-simple Lie group with finite center, acting locally faithfully, isometrically, transitively and non-properly on a lightlike manifold $(M,h)$, i.e $M$ is a homogeneous lightlike space $G/H$, with a non-compact isotropy group $H$. Then a finite cover of $G$ is isomorphic to $O(1,d) \times H$, where $d \geq 2$ and $H$ is semi-simple.  The manifold $M$ is, up to a finite cover, homothetic to a metric product  $Co^m \times N$, where $N$ is an $H$-homogeneous Riemannian space.  Moreover $m=d$ when $d \geq 3$, and $m=1$ or $2$ when $d=2$.  The action of $G$ on $M$ is the product action
 
 
 \end{cor}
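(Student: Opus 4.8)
The plan is to recast everything as invariant linear algebra on the isotropy representation, to exploit the one-dimensionality of the characteristic direction in order to isolate a single real-rank-one factor, and finally to read off the cone from the Liouville rigidity of Theorem~\ref{isometry.cone}. First I would fix a base point $o$, write $M=G/H$ with $H$ the non-compact isotropy, and identify $T_oM$ with $V:=\lieg/\lieh$; a $G$-invariant lightlike metric is then exactly an $\mathrm{Ad}(H)$-invariant lightlike form $b$ on $V$. Its kernel $K=\ker b$ is an $\mathrm{Ad}(H)$-stable line --- the characteristic direction at $o$ --- and $b$ descends to a positive-definite form on $V/K$. Hence the isotropy representation $\rho\colon H\to GL(V)$ preserves the flag $0\subset K\subset V$, acts on $K$ by a character and orthogonally on $V/K$; equivalently $\rho(H)$ lands in the linear lightlike group $O(0,\dim V-1)$, which is $\bigl(\re^{*}\times O(V/K)\bigr)\ltimes\mathrm{Hom}(V/K,K)$ and is isomorphic to a minimal parabolic of $O(1,\dim V)$. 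In particular $\rho(H)$ is amenable, and every semisimple subgroup of it is conjugate into the compact factor $O(V/K)$.

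\emph{Next, that the isotropy is amenable --- the main obstacle.} I would first note that no simple factor of $G$ can lie inside $H$, since being normal it would act trivially on $G/H$, contradicting local faithfulness. The delicate point is to rule out a non-compact \emph{semisimple} subgroup $L\subset H$ itself. By the first step such an $L$ acts trivially on $V/K$ and, having no non-trivial character, trivially on $K$, hence on all of $T_oM$; it therefore fixes $o$ with its full $1$-jet and preserves the null leaf through $o$, acting there by germs of diffeomorphisms tangent to the identity. Since every finite-order jet group of such germs is unipotent and a simple group has no non-trivial homomorphism to a unipotent group, $L$ must act trivially to all orders along the leaf, and likewise on the local leaf space; a standard rigidity argument then forces $L$ trivial near $o$, hence on $M$ --- a contradiction. \textbf{Controlling this kernel of $\rho$, i.e.\ the flexibility of lightlike metrics along null leaves, is the heart of the matter.} It follows that $H$ is amenable, so all of its non-compactness sits in its amenable radical.

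\emph{Then, a single factor $O(1,d)$ and the splitting.} Writing $G=G_1\times\cdots\times G_k$ up to finite cover, non-properness yields a sequence leaving every compact set, whose escaping $A$-part (Cartan decomposition) contracts $M$ onto the characteristic foliation. Because the contracted line $K$ is one-dimensional, the analysis of this contraction --- exactly as in the proof of Theorem~\ref{simple.transitive} --- localises the escape in a single real-rank-one factor $G_1$, the others staying bounded. Thus $G':=\prod_{i\neq1}G_i$ acts properly, $H_0:=G'\cap H$ is compact, and $N:=G'/H_0$ is a Riemannian homogeneous space for the now positive-definite transverse metric. On the $G_1$-side the escaping unipotent acts on $V$ through the \emph{abelian} shears $\mathrm{Hom}(V/K,K)$; applying the previous step to its centre forces the unipotent radical of $G_1$ to be abelian, which among real rank-one simple groups singles out $G_1=O(1,d)$ (the Heisenberg-type unipotents of $SU(1,\cdot)$, $Sp(1,\cdot)$ and $F_4^{-20}$ being thereby excluded). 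As in Theorem~\ref{simple.transitive} the $G_1$-orbit is a lightcone; and since $K$ belongs to $G_1$ while $G'$ acts by genuine isometries, the isotropy cannot twist between the two commuting factors, so $H=(G_1\cap H)\times H_0$. By transitivity $M=G/H$ is then the metric product $\bigl(G_1/(G_1\cap H)\bigr)\times(G'/H_0)=Co^{m}\times N$, on which $G=O(1,d)\times G'$ (with $G'$ semisimple, the factor denoted $H$ in the statement) acts by the product action.

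\emph{Finally, computing $m$.} For $d\geq3$ I would invoke the Liouville rigidity of Theorem~\ref{isometry.cone}: the only non-proper lightlike homogeneous quotient of $O(1,d)$ is $Co^{d}$, since the sole competitor $O(1,d)/P\simeq S^{d-1}$ carries merely a conformal --- not an invariant lightlike --- structure as soon as $d-1\geq2$; hence $m=d$. For $d=2$, where $O(1,2)\simeq PSL(2,\re)$ and rigidity genuinely fails, the outcome depends on whether the $\re^{*}$-scaling on $K$ is external or internal to the isotropy --- equivalently, whether $G_1\cap H$ is the unipotent-by-compact $Euc_1$ or the full Borel $MAN$: in the first case the orbit is the honest lightcone $Co^{2}$, in the second the degenerate one-dimensional cone $Co^{1}\simeq\re P^{1}$, giving $m=2$ or $m=1$. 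This is exactly the stated dichotomy, which together with the product structure completes the proof.
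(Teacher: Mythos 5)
Your overall skeleton (isolate a rank-one factor, identify its orbit as a cone, split off a Riemannian factor, settle $d=2$ by the $N$-versus-Borel dichotomy) runs parallel to the paper's, but the step you yourself flag as ``the heart of the matter'' is resolved by an argument that is false for lightlike metrics. You claim that a connected subgroup of the isotropy acting trivially on $T_oM$ --- indeed on all finite jets at $o$ --- must act trivially near $o$ by ``a standard rigidity argument''. No such argument exists here: lightlike metrics are not rigid geometric structures, and this is precisely their flexibility. Concretely, on $\re \times L$ with the basic metric $0 \oplus g$, the maps $(t,l)\mapsto (t+s f(l),l)$ form a one-parameter group of isometries; choosing $f$ flat at $l_0$ but not identically zero gives a connected isometry group fixing $(0,l_0)$ with trivial jets of every order, yet acting non-trivially on every neighbourhood of that point. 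For your semisimple subgroup $L$ the gap is repairable, exactly as in the paper's Lemma \ref{dimension.orbite.lightlike}: $L$ contains a non-trivial compact subgroup (finite center), a compact group preserves some Riemannian metric by averaging, and a Riemannian isometry fixing a point with trivial derivative is the identity --- no jets needed. But the second time you invoke the step, namely applying it to the centre of the unipotent radical of $G_1$ in order to force that radical to be abelian (hence $G_1=O(1,d)$, excluding $SU(1,m)$, $Sp(1,m)$, $F_4^{-20}$), the repair is unavailable: the centre of a Heisenberg-type unipotent group is itself unipotent, contains no non-trivial compact subgroups, and admits plenty of non-trivial homomorphisms into jet groups, so neither your jet argument nor the averaging fix gives anything. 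This exclusion is a genuine hole in the proposal.

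The paper closes exactly this hole by pure algebra, with no appeal to any rigidity of the structure: in Step 3 of the simple case it proves $[\lieg_{2\alpha},\lieg_{-\alpha}]\neq 0$, takes a non-zero $Y$ in this bracket, and applies Lemma \ref{nilpotantstabilizer} twice to produce $[Y,\Theta Y]\in \liea\cap\lieg_p$, contradicting Lemma \ref{centerfact}; that is what eliminates the rank-one groups with non-abelian horospherical algebra. Two smaller weaknesses of the same kind: your assertion that the escaping sequence stays bounded in all factors but one is stated, not proved --- in the paper, properness of the complementary factor is a \emph{consequence} of the product structure of Proposition \ref{produit} (whose proof shows the complementary orbits are Riemannian via Lemma \ref{commute}), not an input; and ``the isotropy cannot twist between the two commuting factors'' likewise hides the actual content of Lemma \ref{commute}, since graph-type isotropies are ruled out precisely by showing that an isometry of the cone commuting with the $O(1,d)$-action is trivial.
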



 






The non-properness assumption is essential in the previous theorems. If one removes it, ``everything becomes possible". Indeed, consider a Lie group $L$ and a lightlike scalar product on its Lie algebra $\liel$. Translating it on $L$ by left multiplication yields a lightlike metric on $L$, with isometry group containing $L$, acting by left translations.

It is quite surprising that kind of global rigidity theorems can be proved in the framework of lightlike metrics, which are not rigid geometric structures (see \S \ref{transversalement.riemannien}). Here, it is, in some sense,  the algebraic assumption of semi-simplicity which makes the situation rigid. However,  since  any Lie algebra is  a semi-direct product of a semi-simple and a solvable one, it is natural to start  looking  to  actions of semi-simple Lie groups.

When the manifold $M$ is compact, only one simple Lie group can act isometrically, as shows the:

\begin{theorem}\label{compact}
Let $G$ be a non-compact simple Lie group with finite center, acting isometrically on a compact lightlike manifold $(M,h)$. Then $G$ is a finite covering of $PSL(2,\re)$, and all the orbit of $G$ are closed, 1-dimensional, and lightlike.

\end{theorem}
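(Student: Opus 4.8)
The plan is to exploit two facts: a non-compact group cannot act properly on a compact space, and the restriction of $h$ to any orbit is tightly constrained by the representation theory of $G$. Since $M$ is compact and $G$ is non-compact, the map $G\times M\to M\times M$, $(g,x)\mapsto(gx,x)$, cannot be proper, so the action is automatically \emph{non-proper}, and Theorem \ref{simple.transitive} is available. I would first dispose of every $G$ that has no local $SL(2,\re)$-factor. For such $G$, Theorem \ref{simple.transitive} gives $G=H\times H'$ with $H$ locally isomorphic to $O(1,n)$; simplicity forces $H'$ to be trivial, so $G$ is locally $O(1,n)$ and has an orbit homothetic to the lightcone $Co^n$. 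As $O(1,2)$ is locally $SL(2,\re)$, the no-$SL_2$ hypothesis forces $n\geq 3$. Now $Co^n$ is non-compact, hence not closed in $M$, so its closure has a nonempty $G$-invariant frontier of strictly smaller dimension; passing to an orbit of minimal dimension there yields a \emph{compact} orbit. For $O^+(1,n)$ the only positive-dimensional homogeneous space of dimension $<n$ is $S^{n-1}=O^+(1,n)/P$, $P$ the stabiliser of a null ray (a fixed point being excluded because the isotropy representation of the simple non-compact group into the amenable group $O(0,n)$ must be trivial). On $S^{n-1}$ the induced $G$-invariant tensor corresponds to a $P$-invariant symmetric form on $\lieg/\liep$, and since the $\re$-split part of $P$ acts there by a nontrivial homothety, this form vanishes. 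Then $TS^{n-1}\subset N$, which is absurd as $n-1\geq 2>1=\dim N$. This leaves only groups locally isomorphic to $SL(2,\re)$, i.e.\ $G$ is a finite covering of $PSL(2,\re)$.

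It remains to determine the orbits when $G$ is a finite cover of $PSL(2,\re)$. The basic dictionary is that on an orbit $\mathcal O=G/G_x$ the restriction $h|_{\mathcal O}$ is a $G$-invariant positive semi-definite tensor, hence encoded by a $G_x$-invariant positive semi-definite form $\beta$ on $V=\lieg/\lieg_x$. Because $h$ is positive semi-definite with one-dimensional kernel $N$, a tangent vector $v$ to the orbit satisfies $h(v,v)=0$ if and only if $v\in N$; therefore $\ker(h|_{\mathcal O})=T_x\mathcal O\cap N$ is at most one-dimensional, so $\dim\ker\beta\leq 1$. Running through the conjugacy classes of subgroups of $PSL(2,\re)$: a fixed point is excluded exactly as above (the image of $G$ in $O(0,n)$ is trivial, incompatible with the nontrivial near-$p$ dynamics forced by non-properness); the two-dimensional subgroups are all conjugate to a Borel subgroup $B$, giving $\mathcal O=G/B\cong S^1$, which is compact, with $V$ one-dimensional and the split torus $A\subset B$ acting on $V$ by a nonzero weight, so that $\beta=0$. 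Thus these orbits are tangent to $N$, that is lightlike, and closed --- precisely the orbits described in the statement. So everything reduces to showing that no orbit of dimension $\geq 2$ occurs.

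This last point is the heart of the matter and the step I expect to be the main obstacle, because the purely algebraic weight count does not settle all of it. It does dispose of $G_x=A$ (the torus acts on $V=\lien^{+}\oplus\lien^{-}$ by the two opposite nonzero weights, forcing $\beta=0$ and violating $\dim\ker\beta\leq 1$). But the degenerate case $G_x=N$ survives algebraically: there $A$ fixes one line and contracts the transverse direction, and a rank-one invariant $\beta$ is permitted; the spacelike case $G_x=K=SO(2)$ yields a Riemannian orbit $G/K\cong\mathbb H^2$ on which $A$ acts \emph{isometrically}; and the open orbit leaves $\beta$ essentially unconstrained. I would exclude these dynamically, using that the split Cartan flow $a_t$ on the compact manifold $M$ has a minimal set, hence a recurrent point, while its derivative cocycle lies in the linear model $O(0,n)=\re\cdot O(n)\ltimes\re^{n}$, whose conformal factor on the Riemannian bundle $TM/N$ must stay bounded along a recurrent orbit. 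On a $G/N$-orbit, or on an open orbit, the surviving transverse-Riemannian direction is a root space contracted by $a_t$ at the rate $e^{-2t}\to 0$, contradicting this boundedness; the spacelike orbit $G/K\cong\mathbb H^2$ is ruled out by analysing its closure, whose frontier is a union of the lightlike circles $G/B$ on which $h$ vanishes, incompatible with the orbit's own complete Riemannian (rank two) metric near that frontier. Carrying out this recurrence-and-frontier analysis rigorously --- in particular locating the recurrent points relative to the orbit and its boundary --- is the delicate part. Once it is done, every orbit is one-dimensional, lightlike and closed, and $G$ is a finite covering of $PSL(2,\re)$.
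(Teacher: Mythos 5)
Your overall architecture (compactness forces non-properness; split into ``no local $SL(2,\re)$-factor'' versus ``finite cover of $PSL(2,\re)$'') matches the paper, but both halves of your argument have genuine gaps. The fatal one is the step ``$Co^n$ is not closed, so the frontier of its closure has strictly smaller dimension, and an orbit of minimal dimension there is compact.'' Both claims are facts about \emph{algebraic} actions and fail for smooth actions: for an irrational linear flow on the torus, every orbit in the closure of an orbit has the same dimension and none is closed. Worse, the step cannot be repaired in the present setting, because on a compact $M$ \emph{every} point is the limit of a non-escaping sequence (take $p_k=p$ constant; the images automatically stay in the compact $M$), so Proposition \ref{Kowalsky} applies at every point, and as the paper shows this forces \emph{every} $G$-orbit to be homothetic to $Co^n$ --- hence non-compact of dimension $n$. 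There is simply no compact orbit in any frontier, so the contradiction cannot come from orbit topology; it has to come from metric scaling. This is exactly what the paper does: the cone orbit carries a homothety flow $h^t$ multiplying $h_{|G.p_0}$ by $e^{2t}$ and commuting with a maximal compact $K$; evaluating $h$ on $\liek$-fields defines a continuous map $\mu$ on the compact fixed-point set $M_0$ of $K_0$, satisfying $\mu(h^t.p_0)=e^{2t}\mu(p_0)$, and choosing $t_k\to+\infty$ with $h^{t_k}.p_0\to p_\infty\in M_0$ makes $\mu$ blow up along a convergent sequence --- contradiction. Nothing in your proposal plays this role.

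In the $PSL(2,\re)$ half you flag the gap yourself: your exclusions of $G/N$-orbits, open orbits and $\hh^2$-orbits rest on finding a \emph{recurrent} point of the Cartan flow \emph{on} the bad orbit, which you do not know how to do. The observation you are missing is that no recurrence is needed: in Kowalsky's argument the image points $q_k=\phi_X^{t_k}(p_k)$ only need to remain in a compact set, not to return near $p$, so on compact $M$ one may take $p_k=p$ for every $p$. Hence $Y$ and $Z$ are isotropic at \emph{every} point of $M$; since the isotropic vectors of $h_p$ form a line, some nontrivial combination $aY+bZ$ lies in $\lieg_p$ at every $p$, so all orbits are lightlike of dimension at most $2$ --- open orbits and Riemannian $\hh^2$-orbits are eliminated at once, with no dynamics. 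The remaining $2$-dimensional orbits (homothetic to $Co^2$ by Proposition \ref{PSL(2,R)surfaces}) are then killed by the same homothety-plus-compactness argument as above, with $\liek_0=0$. Your algebraic exclusion of $G_x=A$ and your description of the closed circle orbits $G/B$ are correct, but as written neither half of your proof closes.
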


\subsection{The mixed signature case: sub-Lorentz metrics} This notion will naturally  modelize the situation of general submanifolds 
in Lorentz submanifolds. A {\it sub-Lorentz metric} $g$ on $M$
is a symmetric covariant 2 -tensor, which is at each point, a scalar product of  either Lorentz, Euclidean, or lightlike type. The point is that we allow the type to vary over $M$.  So, if $(L, h)$ is a Lorentz manifold, and 
$M$ a submanifold of $L$, then the restriction on $h$ on $M$
is a sub-Lorentz metric (this fact raises the inverse problem, i.e. isometric embedding of sub-Lorentz metrics in Lorentz manifolds). We think it is worthwhile to investigate the geometry of these natural 
and rich structures (see for instance \cite{Miernowski} for a research  of normal forms of these metrics in dimension 2). 

We restrict our investigation here to an adaptation of our lightlike results to this sub-Lorentz situation.

\subsubsection{Lorentz dynamics}  Recall the three fundamental  examples of Lorentz manifolds having an isometry group which acts non-properly. They are just the universal spaces of constant curvature:
\begin{enumerate}
\item The minkowski space: $Min_{1, n-1} =  O(1, n-1) \ltimes {\mathbb R}^n / O(1, n-1)$

\item The de Sitter space $dS_n = O(1, n)/O(1, n-1)$

\item The anti de Sitter space $AdS_n = O(2, n-1) / O(1, n-1)$

\end{enumerate}

In the case of Minkowski space, the isometry group is not semi-simple.

The Lorentz and lightlike dynamics are unified in the following statement:
\begin{theorem} \label{Sub-Lorentz.Theorem} Let $G$ be a semi-simple group with finite center , no compact factor and 
no local factor isomorphic to $SL(2, \re)$,  acting isometrically non-properly on a sub-Lorentz manifold $M$. 
Then, up to a finite cover, $G$ has a factor $G^\prime$ isomorphic to $O(1, n)$ or $O(2, n)$
and having some orbit homothetic to  $dS_n$, $ AdS_n$ 
or $Co^n$.

\end{theorem}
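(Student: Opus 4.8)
The plan is to split $M$ according to the pointwise type of $g$ and to show that the non-properness can only be carried by an orbit whose induced metric is \emph{not} Riemannian; the degenerate case is then settled by Theorem~\ref{simple.transitive} (giving a $Co^n$ factor), and the Lorentz case by the (by now classical) dynamics of non-proper semisimple isometric actions on Lorentz manifolds (giving a $dS_n$ or $AdS_n$ factor). First I would record that every orbit $\mathcal{O}=G\cdot x$ carries a $G$-invariant induced tensor $g|_{\mathcal O}$; since $G$ is transitive on $\mathcal O$ and preserves $g$, this induced tensor has constant type, so each orbit is purely Lorentz, purely Euclidean (i.e.\ positive definite), or purely lightlike. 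Thus the problem of producing an orbit homothetic to $dS_n$, $AdS_n$ or $Co^n$ becomes the problem of locating, among these pure-type orbits, one that carries the non-properness.

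The key preliminary observation is that the Euclidean orbits contribute nothing. Indeed, if the induced metric on an orbit $\mathcal O \cong G/G_x$ is positive definite and $G$-invariant, then $\mathrm{Ad}(G_x)$ preserves a positive definite form on $\lieg/\lieg_x$, which forces $G_x$ to be compact; hence $G$ acts \emph{properly} on such an orbit. Consequently any non-proper recurrence must accumulate off the union of the Euclidean orbits. Concretely, taking a sequence $(g_k)$ leaving every compact set of $G$ together with $x_k\to x$ and $g_k x_k\to y$ witnessing the non-properness, the derivatives $d(g_k)_{x_k}$ are linear $g$-isometries; were the ambient tensor Euclidean near both $x$ and $y$, these would be bounded orthogonal maps in fixed frames, and combined with $x_k\to x$, $g_k x_k\to y$ this would confine $(g_k)$ to a compact subset of $G$, a contradiction. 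Hence the recurrence must be carried by an orbit whose induced metric is of Lorentz or of lightlike type.

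If the non-properness concentrates on a \textbf{lightlike} orbit, that orbit is a homogeneous lightlike manifold on which $G$ acts isometrically and non-properly; since $G$ has finite center, no compact factor, and no factor locally isomorphic to $SL(2,\re)$, the hypotheses of Theorem~\ref{simple.transitive} (and of Corollary~\ref{simple.homogeneous} in the transitive case) are met, and we extract a factor $G'\cong O(1,n)$ together with an orbit homothetic to $Co^n$ — exactly one of the three desired conclusions. If instead the recurrence is carried by the \textbf{Lorentz} part, then $G$ acts isometrically and non-properly on the open Lorentz manifold $M_{\mathrm{Lor}}$, and I would invoke the classification of non-proper semisimple isometric actions on Lorentz manifolds, in the line of \cite{DMZ} and \cite{Zeghib.GAFA2} and of the $dS_n=O(1,n)/O(1,n-1)$, $AdS_n=O(2,n-1)/O(1,n-1)$ dynamics recalled above, to produce a factor isomorphic to $O(1,n)$ or $O(2,n)$ with an orbit homothetic to $dS_n$ or $AdS_n$.

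The hard part will be the dynamical reduction of the middle paragraph: upgrading ``non-properness of the global action'' to ``non-properness carried by a single orbit of pure Lorentz or lightlike \emph{induced} type,'' while controlling the behaviour near the degenerate locus where the Lorentz and Euclidean regions meet. In particular one must rule out the artefact that an orbit sitting inside the degenerate locus of $(M,g)$ could nonetheless inherit a positive definite (Euclidean) metric, and conversely pin down when the kernel direction of $g$ is tangent to the orbit. Making the frame/cocycle estimate precise — choosing a Cartan decomposition $g_k=c_k\exp(a_k)c_k'$ with $\|a_k\|\to\infty$ and tracking the contracting directions, which must then fall into the $g$-isotropic cone — is where the genuine work lies; once a pure-type, non-proper orbit is isolated, the conclusion follows from Theorem~\ref{simple.transitive} and the Lorentz theory as above.
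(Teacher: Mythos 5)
Your skeleton --- pure-type orbits, dispose of the Euclidean ones, then quote Theorem \ref{simple.transitive} in the lightlike case and the Lorentz theory of \cite{DMZ} (Theorem \ref{Lorentz.case}) in the Lorentz case --- is indeed the paper's skeleton, but the one step that carries the whole theorem is precisely the step you leave open in your last paragraph, and the paper's way of doing it is not a refinement of your ``confinement of recurrence'' idea but a replacement for it. The paper observes that Proposition \ref{Kowalsky} never uses the lightlike character of the tensor: its proof only needs $h_{q_k}(Y_{q_k},Y_{q_k})$ to stay bounded when $(q_k)$ lies in a compact set, so it holds verbatim for a sub-Lorentz metric. This produces a point $p$ and an element $X$ of a Cartan subalgebra such that $W^s_X$ is isotropic at $p$. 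Now at \emph{every} point of a sub-Lorentz manifold --- Lorentz, Euclidean or lightlike alike --- a totally isotropic subspace of the tangent space has dimension at most one, while $\dim W^s_X>1$ because $G$ has no local factor $SL(2,\re)$; hence $W^s_X\cap\lieg_p\neq 0$, i.e. the stabilizer algebra of $p$ contains a nonzero nilpotent element, and non-properness is automatically carried by the single orbit $G\cdot p$. No tracking of return sequences near the degenerate locus is needed at all. The paper then excludes the Riemannian possibility for this particular orbit (in the spirit of Lemma \ref{reduction.transitive}: isotropy plus positive definiteness forces $W^s_X\subset\lieg_p$, skew-symmetry plus nilpotency kills $ad(Y)$ on $\lieg/\lieg_p$, and Jacobson--Morozov yields the contradiction), and concludes by the two quoted theorems. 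This algebraic mechanism is the missing idea in your proposal.

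Moreover, the lemma you offer as a substitute is false. That $Ad(G_x)$ preserves a Euclidean form on $\lieg/\lieg_x$ only makes the \emph{image} of the isotropy representation precompact; its kernel can be a non-compact factor of $G$ acting trivially on the orbit but not on $M$. Concretely, let $G=O(1,n)\times O(1,m)$, $n,m\geq 3$, act on $M=Min_{1,n}\times {\bf H}^m$ with the product metric (a Lorentz, hence sub-Lorentz, manifold): the orbit of $(0,y)$ is $\{0\}\times {\bf H}^m$, which is Riemannian, yet its stabilizer $O(1,n)\times O(m)$ is non-compact, so $G$ acts non-properly on this Euclidean orbit. The same example shows that your frame/cocycle estimate cannot do the job you assign to it: it constrains the \emph{ambient} type of $h$ near the accumulation points of a return sequence, whereas the theorem requires an orbit of prescribed \emph{induced} type, and the two are unrelated (in the example the ambient type is Lorentz everywhere, while the orbit carrying the non-compact isotropy is Riemannian). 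So the assertion ``the non-properness concentrates on a Lorentz or lightlike orbit'' cannot be extracted from your middle paragraph; it is exactly what Proposition \ref{Kowalsky}, the nilpotent-stabilizer argument, and the subsequent exclusion of the Riemannian case supply in the paper's proof.
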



    


   



\section{Preliminaries} \label{preliminaries}


\subsection {Proof of Theorem \ref{isometry.cone}.}
 
The metric on $Co^n$ is just the metric $0 \oplus e^{2t}g_{{\bf
S}^{n-1}}$ on $\Bbb R \times {\bf S}^{n-1}$. An isometry $f$ of
$Co^n$ is of the form $(t,x)  \mapsto (\lambda(t,x),\phi(x))$. A
simple calculation proves that $f$ is isometric iff:
$$ \phi^*g_{{\bf S}^{n-1}}= e^{2(t- \lambda(t,x))}g_{{\bf S}^{n-1}}  $$

So, any local isometry of $Co^n$ is of the form $(t,x) \mapsto
(t-\mu(x),\phi(x))$, with $\phi$ a local conformal transformation of
the sphere satisfying $\phi^*g_{{\bf S}^{n-1}}= e^{2 \mu}g_{{\bf
S}^{n-1}}$. Thus, the different rigidity phenomena are  just
consequences of classical analogous rigidity results for conformal
transformations on the sphere.
$\Box$

\subsection{$SL(2, \re)$-homogeneous spaces} Understanding  these spaces is worthwhile in our context, since one can  take advantage of restricting the $G$-action to  small simpler groups, e.g. $SL(2, \RR)$ or a finite cover, which always exist   
    in  semi-simple Lie groups.


\subsubsection{Notations}
Let $SL(2,\mathbb{R})$ be the Lie group of $2 \times 2$-matrices with determinant 
  1. It is known that any one parameter subgroup of
$SL(2,\mathbb{R})$ is conjugate to one of the following:
\[
A^+=\left\{ \left(
\begin{array}{cc}
e^{t} & 0 \\
0 & e^{-t}%
\end{array}%
\right) ,\ t\in \mathbb{R}\right\} ,N=\left\{ \left(
\begin{array}{cc}
1 & t \\
0 & 1%
\end{array}%
\right) ,\ t\in \mathbb{R}\right\} \]  
\[ \textrm{ or \ }K^+=\left\{
\left(
\begin{array}{cc}
\sin t & -\cos t \\
\cos t & \sin t%
\end{array}%
\right) ,\ t\in \mathbb{R}\right\}.
\]%

The corresponding derivatives of $A^+$ and $ N$ at the identity are%
\[
X=\left(
\begin{array}{cc}
1 & 0 \\
0 & -1%
\end{array}%
\right) \; \mbox{and} \;\; Y=\left(
\begin{array}{cc}
0 & 1 \\
0 & 0%
\end{array}%
\right). 
\]%
Together with $Z=\left(
\begin{array}{cc}
0 & 0\\
1 & 0%
\end{array}%
\right) 
$, $X$ and $Y$ span the Lie algebra  $\mathfrak{sl}(2,\mathbb{R})$ and satisfy the
bracket
relations:%
\[
\left[ X,Y\right] =2Y,\ \ \left[ X,Z\right] =-2Z\textrm{ and \ }\left[ Y,Z%
\right] =X.
\]
As usual, we denote by $A$ (resp.  $K$), the subgroup generated by $A^+,-A^+$ (resp.  $K^+,-K^+$). 

Let $Aff(\re)$ be the subgroup 
of upper triangular 
 matrices:  
$$Aff(\re)= A.N= \left\{ \left(
\begin{array}{cc}a & b \\0 & a^{-1}%
\end{array}\right) \in SL(2,\re) \right\}$$  
and  $\mathfrak{aff}(\re)$ its Lie algebra.

Non-connected 1-dimensional subgroups of $Aff(\re)$   can be constructed as follows.  Let   $\Gamma_0$ be a cyclic subgroup of $A$  generated by
an element  $\gamma \in A$. The semi-direct product 
$\Gamma_0 \ltimes N $ is a closed subgroup. Conversely, any closed 
1-dimensional non-connected subgroup of $Aff(\re)$ is obtained like this.

Finally, recall $PSL(2,\re)=SL(2,\re)/_{\{\pm Id\}}$.

The ``classical''  classification of the $SL(2,\mathbb{R})$-homogenous
spaces, allows one to recognize   the lightlike ones. 
\begin{proposition}\label{PSL(2,R)surfaces} (Classification of $SL(2, \re)$-homogeneous spaces)
\begin{enumerate}
\item Any $SL(2,\mathbb{R})$-homogenous space  is
isomorphic  to one of the following:
\begin{enumerate}
\item The circle $
S^{1}$ $= SL(2,\mathbb{R})/_{Aff(\mathbb{R})}$, endowed with its natural  projective structure.

\item The hyperbolic plane $= SL(2,\mathbb{R})/_K$, with its Riemannian metric of constant negative curvature.
 
\item The affine punctured plane: $\mathbb{R}^{2} \setminus 
\{0 \} $ $= SL(2,\mathbb{R})/_N$, equipped with an affine flat connection, together with a lightlike metric.

\item A Hopf affine torus $ \mathbb{R}^{2} \setminus 
\{0 \} / _{\{x \sim a x\}}= SL(2,\mathbb{R})/_ {\Gamma_0.N}$, endowed with a flat projective structure.

\item  A  space $SL(2, \re) /_ \Gamma$, where $\Gamma$
is a discrete subgroup of $SL(2, \re)$. It is locally an Anti de Sitter space, i.e. a Lorentz manifold  with negative constant curvature.

\end{enumerate}
\item Up to homothety, the unique lightlike $SL(2,\mathbb{R})$-homogenous spaces  having a non-compact isotropy  are:
\begin{enumerate}
\item The lightcone $Co^1$, i.e the circle $S^1$ endowed with the null metric.

\item The lightcone $Co^2$, namely $\mathbb{R}^{2}\setminus 
\{0 \} $, endowed with the lightlike
metric $d\theta ^{2}$,  where $\mathbb{R}^{2}\setminus\{0 \}$ is parameterized by the  polar coordinates 
($r,\theta)$. 
\end{enumerate}
\end{enumerate}
\end{proposition}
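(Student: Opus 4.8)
The plan is to regard part (1) as the \emph{classical} classification of closed subgroups of $G=SL(2,\re)$ up to conjugacy, and to concentrate the real work on part (2). A connected $G$-homogeneous space is $G$-equivariantly a quotient $G/H$ with $H$ closed, taken up to conjugacy, and I would organize these by the Lie algebra $\mathfrak h\subseteq\mathfrak{sl}(2,\re)$ together with the component group, which must lie in the normalizer of $H^{0}$ in $G$. Up to conjugacy the subalgebras are $0$, the three lines $\mathbb RX$, $\mathbb RY$, $\mathbb R(Y-Z)$ (split, unipotent, compact), the Borel $\mathfrak{aff}(\re)$, and $\mathfrak{sl}(2,\re)$. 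Reading off the quotients recovers the list: $\mathfrak h=\mathfrak{aff}(\re)$, self-normalizing as a parabolic, gives the projective line $S^{1}$ (a); $\mathfrak h=\mathbb R(Y-Z)$ gives the self-normalizing maximal compact $K$ with $G/K=\mathbb H^{2}$ (b); $\mathfrak h=\mathbb RY$ has normalizer $Aff(\re)$ with $Aff(\re)/N\cong A$, so the closed subgroups with this identity component are $N$, giving $G/N=\re^{2}\setminus\{0\}$ (c), and the groups $\Gamma_{0}\ltimes N$ with $\Gamma_{0}$ cyclic in $A$, giving the Hopf tori (d); and $\mathfrak h=0$ gives $G/\Gamma$ with $\Gamma$ discrete, carrying the bi-invariant Killing metric, i.e. a locally anti-de Sitter structure (e). The remaining split case $\mathfrak h=\mathbb RX$ gives $G/A^{+}$, the surface of oriented geodesics of $\mathbb H^{2}$; as we see below its invariant metric is \emph{Lorentz} (de Sitter), which is why it does not intervene in part (2).

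For part (2) the governing principle is that a $G$-invariant lightlike metric on $G/H$ is exactly an $H$-invariant lightlike scalar product on $T=\mathfrak{g}/\mathfrak h$, with $H$ acting through the isotropy representation induced by $\mathrm{Ad}$. I would compute this representation case by case and ask when it preserves a positive semi-definite form with a one-dimensional kernel; the degenerate behaviour occurs precisely for the parabolic isotropies. For $H=N$ the flow $\exp(tY)$ acts on $T=\langle\bar X,\bar Z\rangle$ by the shear $\bar X\mapsto\bar X$, $\bar Z\mapsto\bar Z+t\bar X$, so an invariant quadratic form depends only on the $\bar Z$-coordinate; the invariant forms are the multiples of $(\bar Z^{*})^{2}$, positive semi-definite with kernel $\mathbb R\bar X$, giving a single homothety class of invariant lightlike metric on $\re^{2}\setminus\{0\}$, which one identifies with the lightcone $Co^{2}$ of \S\ref{lightcone.example}; the isotropy $N$ is non-compact. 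For $H=Aff(\re)$ the tangent line $T=\mathbb R\bar Z$ is scaled by $\mathrm{Ad}(\exp sX)$ through $e^{-2s}$, so the only invariant form is $0$; this is the degenerate $n=0$ instance of a lightlike scalar product and yields $S^{1}$ with the null metric, i.e. $Co^{1}$, again with non-compact isotropy.

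It remains to exclude the other spaces. For $H=K$ the isotropy acts by rotations, so every invariant form is definite and the metric is Riemannian; moreover $K$ is compact, so this case is ruled out by hypothesis. For $H=A^{+}$ one has $\bar Y\mapsto e^{2t}\bar Y$, $\bar Z\mapsto e^{-2t}\bar Z$, whose invariant forms are the multiples of $\bar Y^{*}\bar Z^{*}$, \emph{non-degenerate} of Lorentz type, so there is no invariant lightlike metric. The Hopf tori are excluded because the $N$-invariant form $(\bar Z^{*})^{2}$ is scaled by $\mathrm{Ad}(\exp sX)$ (through $e^{4s}$), hence is not invariant under a nontrivial $\Gamma_{0}\subset A$. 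Finally, for the three-dimensional quotients $G/\Gamma$ an invariant lightlike form on $T=\mathfrak{sl}(2,\re)$ would have a one-dimensional kernel $\ell$ that is $\mathrm{Ad}(\Gamma)$-invariant, and the induced action on the transverse plane $\mathfrak{sl}(2,\re)/\ell$ would have to preserve the induced positive form, i.e. lie in a copy of $O(2)$.

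The step I expect to be the main obstacle is precisely this last exclusion: one must show that an element of $\mathrm{Ad}(SL(2,\re))\subset SO(1,2)$ which fixes a line $\ell$ and acts orthogonally on $\mathfrak{sl}(2,\re)/\ell$ is automatically bounded on all of $\mathfrak{sl}(2,\re)$. For this I would distinguish whether $\ell$ is Killing-timelike, spacelike or null and check that the only possibly unbounded part, a scaling along $\ell$ (and dually on $\mathfrak{sl}(2,\re)/\ell^{\perp}$), is forced to be trivial. Consequently $\mathrm{Ad}(\Gamma)$ is relatively compact, hence finite since $\Gamma$ is discrete, so the isotropy is compact and $G/\Gamma$ is excluded. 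Once this is in place, the uniqueness up to homothety of each invariant form — the space of invariants being one-dimensional in the two surviving cases — and their identification with the explicit models $Co^{1}$ and $Co^{2}$ are routine, completing part (2).
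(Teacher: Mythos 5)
Your proposal is correct, and at its core it follows the same strategy as the paper: take the classical classification of closed subgroups of $SL(2,\re)$ as given, then decide part (2) by asking, for each isotropy $H$, whether the isotropy representation on $\lieg/\lieh$ preserves a lightlike scalar product (equivalently, is conjugate into the relevant group $O(0,n)$). The differences are in execution, and they mostly favor your version. First, your enumeration keeps the split line $\re X$, i.e.\ the isotropy $A^+$ whose quotient is the de Sitter surface of oriented geodesics of the hyperbolic plane; the paper's proof runs only through $K$, $N$, $\Gamma_0 N$ and discrete $\Gamma$ (and this quotient is in fact also missing from the list in part (1) as stated), so this \emph{non-compact} isotropy --- exactly the kind that part (2) must rule out --- is silently absent there, and your computation (the $A^+$-invariant forms on $\lieg/\re X$ are the multiples of $\bar Y^*\bar Z^*$, hence Lorentzian) closes that case. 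Second, where the paper pins down the metric on $\re^2\setminus\{0\}$ by arguing that the null leaves are radial and concluding ``by homogeneity'' that it equals $\alpha\, d\theta^2$, you instead prove that the space of $N$-invariant forms is one-dimensional and identify the quotient with $Co^2$ through the equivariant covering onto the cone. Your route is not only cleaner but sidesteps an actual slip: $d\theta^2$ is not invariant under the linear action (invariance under the diagonal flow forces the conformal factor $r^4$, and $\alpha\, r^4 d\theta^2$ is isometric to the cone metric via the radial reparameterization $r \mapsto r^2$), so the invariant statement ``homothetic to $Co^2$'' that you prove is the correct formulation. Third, your scaling argument for the Hopf tori ($\exp(sX)$ multiplies the unique $N$-invariant form by $e^{\pm 4s}$) makes explicit what the paper compresses into ``one observes''.

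The only place where you are no more complete than the paper is the exclusion of $G/\Gamma$, which you sketch and the paper merely asserts. Your plan works, but note that a scaling along $\ell$ is not the only potentially unbounded part: an element of $SO_0(1,2)$ fixing $\ell$ and acting orthogonally on the quotient could a priori also carry a shear. The clean finish is the elliptic/parabolic/hyperbolic trichotomy in $PSL(2,\re)\cong SO_0(1,2)$: a hyperbolic element preserving a positive semi-definite $q$ would force both of its null eigenlines into the one-dimensional kernel, which is absurd; a parabolic element preserves only its null fixed line, so that line would have to be $\ker q$, and then the induced action on $\mathfrak{sl}(2,\re)/\ker q$ is a nontrivial unipotent, which is never orthogonal. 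Hence every element of $\mathrm{Ad}(\Gamma)$ is elliptic, and your case analysis on the causal type of $\ell$ in fact shows that the full stabilizer of $q$ in $SO_0(1,2)$ is compact; since $\mathrm{Ad}$ has finite kernel, $\mathrm{Ad}(\Gamma)$ is discrete, hence finite, and so is $\Gamma$. With that paragraph written out, your proof is complete, and indeed slightly more complete than the paper's own.
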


\begin{proof} The proof of the first part is  standard; we just give  details  in the
lightlike case.

Let $\Sigma$ be be an $SL(2,\re)$-homogeneous space of dimension $\geq 2$ i.e $\Sigma \cong SL(2,\re)/_H$, where $H$ is the stabilizer of some $p\in \Sigma$  and conjugated, as showed above,  to one of the following subgroups: $ K, N, \Gamma_0N$ and $\Gamma$. Let $\lieh$ be the Lie algebra of $H$.
Considering    the isotropy representation $$\begin{array}{cccc}\rho_H:&H&\longrightarrow&T_p(\Sigma)= \lieg /\lieh \end{array} $$

 one observes that  when $H=K$ or $\Gamma_0N$, $\rho_H(H)$ is not conjugated to a subgroup of $O(0,1)$.  Now, if $H=\Gamma$,  then $\rho_H(\Gamma)$ is conjugated to a subgroup of $O(1,2)$. This is just because the Killing form on $\mathfrak{sl}(2,\Bbb R)$ has Lorentz signature. If moreover   $\rho_H(\Gamma)$ is conjugated to a subgroup of $O(0,2)$, then $\rho_H(\Gamma)$ has to be finite. Since the Kernel of the adjoint representation of $SL(2,\Bbb R)$ is finite, we get that $\Gamma$ is finite. 
Therefore the unique lightlike $SL(2,\mathbb{R})$-homogeneous space  of dimension $\geq 2$ with non-compact isotropy 
 is $\mathbb{R}^{2}\setminus 
\{0 \} $. 

 In order to check that the lightlike metric has the  form 
 $\alpha d\theta^2 $ (for some $\alpha \in \re_+^*$), one argues as follows. At  $p = (1, 0)$, the vector $X$
 is the unique non-trivial eigenspace of $\rho_N$, and  thus the orbit of $p$ by the  flow $\phi _{X}^{t}$   must coincide with the null leaf  ${\mathcal N}_{(1, 0)}$, which is therefore a radial half-line. The other null leaves 
 are also radial, since they are 
  images of   ${\mathcal N}_{(1,0)}$ by the 
$SL(2,\mathbb{R})$-action. 
By homogeneity, the metric must have the form
  $\alpha d\theta ^2 $.
\end{proof}

\begin{remark} Proposition \ref{PSL(2,R)surfaces} is a special case of Theorem \ref{simple.transitive} where
$G = O(1, 2)$.
\end{remark}

For a latter use, let us state the following fact, 
which follows directly from the previous  description of the lightlike surface $\re^2\setminus 
\{0 \}$.

\begin{fact}
\label{2-orbites}
If $Y$ is isotropic at some $p \in \re^{2}\setminus 
\{0 \}$, then $Y$ vanishes at $p$ and $X$ is isotropic at $p$.
\end{fact}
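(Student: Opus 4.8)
The plan is to argue entirely within the explicit model furnished by Proposition \ref{PSL(2,R)surfaces}, in which $\re^2 \setminus \{0\} \cong SL(2,\re)/N$ carries the standard linear $SL(2,\re)$-action, the null foliation consists of the radial half-lines, and the lightlike metric is $\alpha\,d\theta^2$. Since the action is linear, the vector field induced on $\re^2\setminus\{0\}$ by a Lie algebra element $\xi \in \mathfrak{sl}(2,\re)$ is, at a point $p$ regarded as a column vector, simply $\xi\cdot p$. In particular, writing $p=(p_1,p_2)$, the fields induced by the matrices $X$ and $Y$ are $X\cdot p=(p_1,-p_2)$ and $Y\cdot p=(p_2,0)$.

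Next I would record the isotropy criterion coming from the metric. Because $h=\alpha\,d\theta^2$, its kernel at each point is the radial direction, so a tangent vector $v=(v_1,v_2)$ at $p$ is isotropic exactly when it is collinear with the position vector $p$, i.e. when $p_1 v_2 - p_2 v_1 = 0$. Applying this to $v=Y\cdot p=(p_2,0)$ yields the condition $-p_2^{\,2}=0$, that is $p_2=0$. But precisely when $p_2=0$ the vector $Y\cdot p=(p_2,0)$ is the zero vector, which proves that $Y$ vanishes at $p$. Finally, at such a point $p=(p_1,0)$ one has $X\cdot p=(p_1,0)=p$, which is radial and hence isotropic; this gives the second assertion.

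There is essentially no obstacle here: once the model is in place the statement is a two-line verification, and the only point demanding care is bookkeeping of conventions, namely that the kernel of the degenerate form $d\theta^2$ is the radial (position-vector) direction, and the normalization under which $\xi\cdot p$ represents the fundamental vector field of $\xi$ (its overall sign being irrelevant both for isotropy and for vanishing). One can also phrase the argument conceptually: the zero locus of the field induced by $Y$ is exactly the fixed-point set of $N$ under the linear action, namely the first coordinate axis $\{p_2=0\}$, which is a union of null leaves; the computation above merely makes this coincidence explicit, together with the fact that $X\cdot p$ is radial there.
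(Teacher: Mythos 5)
Your proof is correct, and it is essentially the paper's own argument: the paper states that Fact \ref{2-orbites} ``follows directly from the previous description'' of the lightlike surface in Proposition \ref{PSL(2,R)surfaces} (radial null foliation, metric $\alpha\,d\theta^2$, linear $SL(2,\re)$-action), and your computation of $Y\cdot p=(p_2,0)$ and $X\cdot p=(p_1,-p_2)$ against the collinearity criterion $p_1v_2-p_2v_1=0$ is exactly that verification made explicit. No gap; the conventions you flag (sign of the fundamental vector field, kernel being the radial direction) are handled correctly and are indeed immaterial to the conclusion.
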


\subsection{Generalities on semi-simple groups; notations} [See 
for instance \cite{Knapp}] 
Let $G$ be a semi-simple group acting isometrically on $(M,h)$.
 This means that we have a smooth homomorphism $\rho : G \to
Diff^{\infty}(M)$, such that for every $g \in G$, $\rho(g)$ acts as
an isometry for $h$ (i.e $\rho(g)^*h=h$). Let $\lieg$ be the Lie
algebra of $G$. For any $X$ in $\lieg$ , we  will generally use the
notation $\phi_X^t$ instead of $\rho(\exp(tX))$. By a slight abuse of
language, we will also denote by $X$ the vector field of $M$
generated by the flow $\phi_X^t$. 

We get, for every $p \in M$, a homomorphism
$\lambda_p : \lieg \to T_pM$, defined by $\lambda_p(X)=X_p$. The flow $\phi_X^t$ stabilizes $p$ iff $X_p=0$, and we denote by
$\lieg_p$ the Lie algebra of the stabilizer of $p$.

We say that  $X\in \lieg$ is {\it lightlike at $p \in M$} (or {\it isotropic})
(resp. {\it spacelike}) if $h_p(X_p,X_p)=0$ (resp.
$h_p(X_p,X_p)>0$).

 We denote by $\mathfrak{s}_p$ the subspace of
all vectors of $\mathfrak{g}$ which are isotropic at $p \in M$.

  Let $O$ be a lightlike $G$-orbit of some $p  \in M$, that is $O\cong G/{G_p}$, where $G_p$ is the stabilizer of $p$. The tangent space $T_pO$ is identified by $\lambda_p$ to the quotient $\lieg/\lieg_p$. In fact the isotropy representation on $T_pO$ is equivalent to the adjoint representation $Ad$ of $G_p$ on $\lieg /{\lieg_p}$. In particular $G_p$ is mapped, up to conjugacy,  to a subgroup of $O(0,n)$. 

Similarly, the Euclidian space $T_pO/{\mathcal{N}_p}$ is identified to $\lieg/{\mathfrak{s}_p}$, where $Ad: G_p \longrightarrow GL(\lieg/{\mathfrak{s}_p})$ preserves  a positive inner product on $\lieg /\mathfrak{s}_p$, so that $G_p$ acts on $\lieg/{\mathfrak{s}_p}$ by orthogonal matrices. In particular, if we consider the tangent representation: $ad:\lieg_p \longrightarrow End(\lieg/{\mathfrak{s}_p})$, the Lie subalgebra $\lieg_p$ acts by skew symmetric matrices on $\lieg/{\mathfrak{s}_p}$.
We will use the same notation for  the elements of the quotients $\lieg/\lieg_p$ and $\lieg/{\mathfrak{s}_p}$ and their  representatives in the Lie algebra $\lieg$.

 We fix once for all a Cartan involution $\Theta$ on the Lie algebra $\lieg$.
This yields a {\it Cartan decomposition} $\lieg=\liek \oplus \liep
$, $\liek$ (resp. $\liep$) being the eigenspace of $\Theta$
associated with the eigenvalue $+1$ (resp. $-1$).

 We choose $\liea$,
a maximal abelian subalgebra of $\liep$, and $\mathfrak{m}$ the
centralizer of $\liea$ in $\mathfrak{k}$. This choice yields a {\it
rootspace decomposition} of $\lieg$, namely there is a finite family
$\Sigma^+=\{ \alpha_1,...,\alpha_s \}$ of nonzero elements of
$\liea^*$, such that $\lieg= \oplus_{\alpha \in
\Sigma^+}\lieg_{-\alpha} \oplus \lieg_0 \oplus \oplus_{\alpha \in
\Sigma^+}\lieg_{\alpha}$. For every $X \in \liea$,
$ad(X)(Y)=\alpha(X)Y$, as soon as $Y \in \lieg_{\alpha}$.
The Lie
subalgebra $\lieg_0$ is in the kernel of $ad(X)$, for every $X \in
\liea$ and splits as a sum: $\lieg_0=\liea \oplus \mathfrak{m}$. 

 The
positive Weyl chamber $\liea^+ \subset \liea$,
contains those $X \in \liea$, such that $\alpha(X) \geq 0$, for all $\alpha \in \Sigma^+$. Its image by the exponential map is denoted by $A^+$.
Let $\Sigma^- = \{ -\alpha_1,...,-\alpha_s   \}$.

The stable subalgebra ( for $\liea$) $W^s=\oplus_{\alpha \in
\Sigma^-}\lieg_\alpha$, and the unstable  one
$W^u=\oplus_{\alpha \in \Sigma^+}\lieg_\alpha$ are both nilpotent
subalgebras of $\lieg$, mapped diffeomorphically by the exponential
map of $\lieg$ onto two subgroups $N^+ \subset G$, and $N^- \subset
G$. 

Given $X \in \liea$, its {\bf stable} algebra is 
$W^s_X=\oplus_{\alpha(X) <0}  \lieg_{\alpha}$, and its {\bf unstable} algebra is
$W^u_X=\oplus_{\alpha (X) >0 } \lieg_{\alpha}$.


Let us prove now a lemma which will be useful in the sequel:

\begin{lemma}\label{nilpotantstabilizer}The subalgebra
$W^s_{X}$ has the following properties:

\begin{enumerate}

\item $\left[ \mathfrak{g},W^s_{X} \cap \mathfrak{g}_p \right] \subset \mathfrak{s}_p$

\item $\left[
\mathfrak{s}_p,W^s_{X}\cap \mathfrak{g}_p \right] \subset
\mathfrak{g}_p$

\end{enumerate}
\end{lemma}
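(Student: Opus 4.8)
The plan is to prove both inclusions purely structurally, inside $\lieg$, using the isotropy (adjoint) representation together with two elementary facts. The first is that, because $h_p$ is a \emph{positive} semidefinite lightlike form, a vector is isotropic at $p$ precisely when it lies on the null line $\ker h_p$; consequently $\mathfrak{s}_p$ is an honest linear subspace of $\lieg$ (not merely a cone), it contains $\lieg_p$, and both $\lieg_p$ and $\mathfrak{s}_p$ are stable under $\mathrm{ad}(\lieg_p)$ --- the former because it is a subalgebra, the latter because the stabilizer $G_p$ preserves $\ker h_p$. Hence, for any $Y \in \lieg_p$, the operator $\mathrm{ad}(Y)$ descends to the quotients $\lieg/\mathfrak{s}_p$ and $\mathfrak{s}_p/\lieg_p$. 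The second fact is that every $Y \in W^s_X$ is $\mathrm{ad}$-nilpotent: writing $Y$ in its root components, each summand lies in some $\lieg_\alpha$ with $\alpha(X)<0$, and since $[\lieg_\alpha,\lieg_\beta]\subset\lieg_{\alpha+\beta}$ the operator $\mathrm{ad}(Y)$ strictly lowers the $\mathrm{ad}(X)$-eigenvalue; as these eigenvalues range over a finite set, $\mathrm{ad}(Y)$ is nilpotent, and nilpotency then passes to the induced operators on both quotients.

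First I would establish (1). Take $Y \in W^s_X \cap \lieg_p$. The endomorphism induced by $\mathrm{ad}(Y)$ on the Euclidean space $\lieg/\mathfrak{s}_p$ is skew-symmetric, since $\lieg_p$ acts by skew-symmetric matrices there (as recalled above), and at the same time it is nilpotent by the previous paragraph. But a real skew-symmetric operator $S$ is semisimple with purely imaginary spectrum, so if it is nilpotent it must vanish --- concretely, $\mathrm{tr}(S^{\mathsf T}S) = -\mathrm{tr}(S^2) = 0$ forces $S = 0$. Therefore $\mathrm{ad}(Y)$ is zero on $\lieg/\mathfrak{s}_p$, i.e. $[\lieg, Y] \subset \mathfrak{s}_p$, which is exactly (1).

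Then (2) comes out even more cheaply. The quotient $\mathfrak{s}_p/\lieg_p$ is identified via $\lambda_p$ with $\lambda_p(\mathfrak{s}_p) \subset \ker h_p$, hence is at most one-dimensional. The operator induced by $\mathrm{ad}(Y)$ on it is nilpotent, and a nilpotent endomorphism of a space of dimension $\le 1$ is necessarily $0$; thus $[\mathfrak{s}_p, Y] \subset \lieg_p$, which is (2).

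The only real obstacle is the bookkeeping that makes this reduction legitimate: verifying that $\mathfrak{s}_p$ is a linear subspace and that it (together with $\lieg_p$) is $\mathrm{ad}(\lieg_p)$-invariant, so that the induced quotient representations exist, and checking that the nilpotency of $\mathrm{ad}(Y)$ descends to these subquotients. Once this is in place, both statements reduce to the two trivial linear-algebra facts ``skew-symmetric $+$ nilpotent $=0$'' and ``nilpotent on a line $=0$''. Notably I expect no dynamical or limiting argument and no use of non-properness here: the lemma is entirely algebraic, and the hypothesis $Y \in W^s_X$ enters only through ad-nilpotency.
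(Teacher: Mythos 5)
Your proof is correct and follows essentially the same route as the paper's: part (1) is the observation that $\mathrm{ad}(Y)$ acts on the Euclidean quotient $\mathfrak{g}/\mathfrak{s}_p$ as an endomorphism that is simultaneously skew-symmetric and nilpotent, hence zero, and part (2) is nilpotency of the induced action on the (at most) one-dimensional isotropic direction $\mathfrak{s}_p/\mathfrak{g}_p$. The only difference is that you make explicit the bookkeeping the paper leaves implicit --- linearity and $\mathrm{ad}(\mathfrak{g}_p)$-invariance of $\mathfrak{s}_p$, ad-nilpotency of elements of $W^s_X$, and the trace argument for ``skew-symmetric $+$ nilpotent $=0$'' --- which is a welcome clarification but not a different proof.
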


\begin{proof} Let $Y \in W^s_{X} \cap \mathfrak{g}_p$
\begin{enumerate}

\item Since $Y \in \mathfrak{g}_p$, $ad_Y$ acts on  $\mathfrak{g}/\mathfrak{s}_p$ by a skew symetric endomorphism, which is moreover nilpotent since $Y \in W^s_X$. Hence $ad_Y$ acts by the null endomorphism on $\mathfrak{g}/\mathfrak{s}_p$, which means that  $ad_Y$ maps $\mathfrak{g}$ to
$\mathfrak{s}_p$.

\item  $ad_Y$ acts as a  nilpotent endomorphism  of $\mathfrak{g}/\mathfrak{g}_p$ (identified with the tangent space),  and has 
$\mathfrak{s}_p/ \mathfrak{g}_p$ (identified to the isotropic direction)
as a 1-dimensional eigenspace. By nilpotency, the action on it is trivial, i.e $ad_Y$ maps  $\mathfrak{s}_p$
into   $\mathfrak{g}_p$. 

\end{enumerate}\end{proof}

Finally, recall that  a semi-simple Lie group of finite center 
admits a Cartan decomposition $G=KAK$, where $K$ is a maximal compact subgroup 
  of $G$.


\subsection{Non-proper actions} (See  for instance \cite{Kobayashi} for a recent survey about these notions).

\begin{definition} Let $G$ act on $M$. A sequence $(p_k)$
is {\bf non-escaping} if there is a  sequence of transformations $g_k \in G$, such that, both $(p_k)$ and $(q_k) = (g_k(p_k))$ lie in a compact subset of $M$, but $(g_k)$ tends to $\infty$ in $G$, i.e. leaves any compact of $G$.  
 
 -- The sequence $(g_k)$ is  called a {\em ``return sequence''} for $(p_k)$.
 
 -- In the sequel, we will sometimes assume that $(p_k)$ and 
 $(q_k) $ converge to $p$ and $q$ in $M$.
 
One says that the group $G$ acts non-properly  if it admits a non-escaping  sequence.
 \end{definition}

A nice criterion for actions of   semi-simple Lie groups of finite  center to be non-proper, is the next:

\begin{lemma} \label{action de l'algebre de cartan}
Let $G$ be a non-compact semi-simple group with finite center. Then $G$ acts non-properly iff  any Cartan subgroup  $A$ acts non-properly.
\end{lemma}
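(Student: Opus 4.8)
The plan is to prove Lemma \ref{action de l'algebre de cartan} using the $KAK$ (Cartan) decomposition of $G$, which is available since $G$ is semi-simple with finite center. One direction is immediate: if some Cartan subgroup $A$ acts non-properly, then $A \subset G$ acts non-properly, so $G$ acts non-properly as well (a non-escaping sequence for $A$ is a fortiori a non-escaping sequence for $G$, since a sequence leaving every compact of $A$ also leaves every compact of $G$ when $A$ is a closed subgroup). So the content is the converse: non-properness of $G$ forces non-properness of a (hence, by conjugacy, any) Cartan subgroup.

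For the converse, suppose $(p_k)$ is a non-escaping sequence for $G$ with return sequence $(g_k) \to \infty$, and with $p_k \to p$, $q_k = g_k(p_k) \to q$ after passing to a subsequence. First I would write each $g_k = k_k a_k k_k'$ with $k_k, k_k' \in K$ and $a_k \in A^+$ (the closure of the positive Weyl chamber), using the $KAK$ decomposition. Since $K$ is compact, after extracting a subsequence I may assume $k_k \to k_\infty$ and $k_k' \to k_\infty'$ in $K$. The key point is that $g_k \to \infty$ in $G$ forces $a_k \to \infty$ in $A$: indeed, if $(a_k)$ stayed in a compact subset of $A$, then $g_k = k_k a_k k_k'$ would remain in the compact set $K \cdot (\text{cpt}) \cdot K$, contradicting $g_k \to \infty$.

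Now I would transfer the non-escaping data from $G$ to $A$. Set $\tilde p_k = k_k'(p_k)$ and $\tilde q_k = k_k^{-1}(q_k)$. Because $k_k' \to k_\infty'$ and $k_k^{-1} \to k_\infty^{-1}$ in the compact group $K$, and because $p_k, q_k$ converge, the sequences $\tilde p_k \to k_\infty'(p)$ and $\tilde q_k \to k_\infty^{-1}(q)$ both converge, hence lie in a compact subset of $M$. By construction $a_k(\tilde p_k) = a_k k_k'(p_k) = k_k^{-1} g_k (p_k) = k_k^{-1}(q_k) = \tilde q_k$. Thus $(\tilde p_k)$ and $(a_k(\tilde p_k))$ both lie in a compact subset of $M$ while $a_k \to \infty$ in $A$, which is exactly the statement that $A$ acts non-properly, with $(a_k)$ as a return sequence for $(\tilde p_k)$.

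The only remaining subtlety, which I regard as the main point to get right rather than a genuine obstacle, is the passage from ``some Cartan subgroup acts non-properly'' to ``any Cartan subgroup acts non-properly,'' as asserted in the statement. This is settled by the fact that all maximal $\RR$-split tori $A$ are conjugate in $G$: if $A' = gAg^{-1}$, then $A$ acts non-properly if and only if $A'$ does, since conjugating a non-escaping sequence and its return sequence by $g$ produces a non-escaping sequence for $A'$ (the map $a \mapsto gag^{-1}$ is a homeomorphism sending compacts to compacts and $\infty$ to $\infty$). Combining this conjugacy invariance with the equivalence established above — $G$ non-proper iff the fixed $A$ from the $KAK$ decomposition is non-proper — yields the lemma in full.
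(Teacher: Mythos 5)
Your proof is correct and follows essentially the same route as the paper: both use the $KAK$ decomposition to write $g_k = l_k a_k r_k$, then transfer the non-escaping sequence to $A$ by moving the points with the compact factors, so that $(a_k)$ becomes a return sequence. Your write-up merely fills in details the paper leaves implicit (the trivial converse direction, why $a_k \to \infty$, and the conjugacy of Cartan subgroups handling ``any'' versus ``some'').
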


\begin{proof}
 $G$ admits a Cartan decomposition $KAK$, where $K$ is compact. 
 Let $(p_k)$ be a non-escaping sequence of the $G$-action, and 
 $(g_k)$ its return sequence.  Write $g_k = l_k a_k r_k \in KAK$. Then, $p^\prime_k = r_k(p_k)$ is a non-escaping sequence 
 for the $A$-action, with associated return sequence $(a_k)$. Obviously $(a_k)$ goes to infinity in $A$ since $(g_k)$ goes to infinity in $G$.
  \end{proof}

\section{A Key fact on the stable space}

Here we state   a crucial ingredient  for the proofs of all our theorems. In all what follows, $G$ is a non-compact semi-simple Lie group with finite center
acting  locally faithfully, 
non-properly and  isometrically on a lightlike 
manifold $(M, h)$. The main result of this section is:

\begin{proposition} \label{Key}
If no factor of $G$ is locally isomorphic to $SL(2,\re)$, there exists a Cartan subalgebra $\mathfrak{a}_0$ such that for some $X_0 \in
\mathfrak{a}_0$ and $p_0 \in M$, both  $X_0$ and its stable algebra 
$W^s_{X_0}$ are isotropic at $p_0$.
\end{proposition}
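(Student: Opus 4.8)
The plan is to produce the element $X_0$ from the dynamics of a return sequence and then to upgrade the information it yields. First I would invoke Lemma \ref{action de l'algebre de cartan} to reduce the non-properness of the $G$-action to that of a Cartan subgroup $A$: choose a non-escaping sequence $p_k \to p$ whose return sequence lies in $A$, and (using the remark after the definition of non-properness) assume $a_k(p_k) \to q$. Write $a_k = \exp(t_k X_k)$ with $X_k \in \liea$ a unit vector and $t_k \to +\infty$; by compactness of the unit sphere of $\liea$ I may assume $X_k \to X_0$, a unit vector, and after conjugating by a Weyl-group representative in $K$ I may put $X_0$ in the closed positive chamber. Since the roots span $\liea^*$ and $X_0 \neq 0$, some root is negative on $X_0$, so the stable algebra $W^s_{X_0}$ is non-trivial.

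The engine is a contraction estimate that delivers the stable algebra immediately. Using that fundamental vector fields transform by $d\rho(g)(Y_x) = (Ad(g)Y)_{\rho(g)x}$ and that each $\rho(a_k)$ is an isometry, for $Y \in \lieg_{\alpha}$ with $\alpha(X_0) < 0$ I obtain
$$ h_{p_k}(Y_{p_k},Y_{p_k}) = h_{a_k(p_k)}\left( (Ad(a_k)Y)_{a_k(p_k)}, (Ad(a_k)Y)_{a_k(p_k)} \right). $$
Here $Ad(a_k)Y = e^{t_k\alpha(X_k)}Y \to 0$ because $\alpha(X_k) \to \alpha(X_0) < 0$ while $t_k \to +\infty$; as $(x,Z)\mapsto h_x(Z_x,Z_x)$ is continuous and $a_k(p_k)\to q$, the right-hand side tends to $0$, whence $h_p(Y_p,Y_p)=0$. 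Since $\mathfrak{s}_{p}=\lambda_p^{-1}(N_p)$ is a linear subspace ($N_p$ being the one-dimensional kernel) and $W^s_{X_0}$ is spanned by such root vectors, this gives $W^s_{X_0}\subseteq \mathfrak{s}_{p}$. Setting $p_0=p$, it then remains only to show that $X_0$ itself is isotropic at $p$.

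For the last point I would exploit the bound $\dim(\mathfrak{s}_{p}/\lieg_p)=\dim\lambda_p(\mathfrak{s}_{p})\leq\dim N_p=1$: at most one line of $W^s_{X_0}$ escapes the stabilizer $\lieg_p$. For a root $\alpha$ with $\alpha(X_0)<0$ whose space lies inside $\lieg_p$, take an orthonormal basis $(Y_i)$ of $\lieg_{\alpha}$ and complete each $Y_i$ by $\Theta Y_i \in \lieg_{-\alpha}$; Lemma \ref{nilpotantstabilizer}(1) gives $[\Theta Y_i,Y_i]\in\mathfrak{s}_{p}$, and summing over the basis cancels the $\liem$-components (coming from the skew action of $\liem$ on $\lieg_{\alpha}$), leaving a positive multiple of the coroot $H_{\alpha}\in\liea$. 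Thus $H_{\alpha}\in\mathfrak{s}_{p}$ for all such roots, and since the coroots of the roots non-vanishing on $X_0$ span a subspace of $\liea$ containing $X_0$, one concludes $X_0\in\mathfrak{s}_{p}$. Then $\mathfrak{a}_0=\liea$, this $X_0$, and $p_0=p$ answer the proposition.

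The main obstacle is precisely the single escaping direction allowed by the codimension-one bound, and this is where the hypothesis that no factor is locally $SL(2,\re)$ is indispensable. Indeed, the $SL(2,\re)$-action on the lightcone $Co^2 = \re^2\setminus\{0\}$ realizes exactly the forbidden scenario: by Fact \ref{2-orbites} and Proposition \ref{PSL(2,R)surfaces} the nilpotent root vector is isotropic while its partner in the stable algebra is spacelike, so $W^s_{X_0}$ is not isotropic and the conclusion fails. To rule this out I would argue that if the escaping line sat in a root space $\lieg_{\beta}$ with $\dim\lieg_{\beta}=1$ belonging to a rank-one factor, the rank-one subgroup spanned by $\{H,Y,\Theta Y\}$ would act on the orbit of $p$ exactly as $SL(2,\re)$ on $Co^2$, forcing an $SL(2,\re)$-factor --- excluded by hypothesis. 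When no such factor is present, each simple factor is ``larger'' than $\mathfrak{sl}(2,\re)$ (higher rank, or a restricted root space of dimension $\geq 2$), so there is enough room --- another root, or another vector of $\lieg_{\beta}\cap\lieg_p$ --- to still recover the coroots needed to capture $X_0$ after the one-dimensional escape. Making this dichotomy precise, handling the $\liem$-component carefully, and checking that the surviving coroots genuinely span $X_0$ in the singular (on-a-wall) case is the delicate part I expect to require the most care.
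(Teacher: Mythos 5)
Your first two steps are sound: the reduction to the Cartan subgroup and the contraction estimate are exactly Proposition \ref{Kowalsky} of the paper, and the coroot computation is essentially correct (for $Y\in\lieg_\alpha\cap\lieg_p$, Lemma \ref{nilpotantstabilizer} together with invariance of the Killing form does give a positive multiple of the coroot inside $\mathfrak{s}_p$). The proof breaks down, however, at the point you yourself flag as delicate, and the gap is not a technicality. Your harvest of coroots needs, for each relevant root $\alpha$, a \emph{nonzero vector of $\lieg_\alpha$ lying in $\lieg_p$}, because Lemma \ref{nilpotantstabilizer} applies only to vectors that vanish at $p$, not to merely isotropic ones. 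The codimension-one bound tells you that $W^s_{X_0}\cap\lieg_p$ is the kernel of a linear functional $f$ on $W^s_{X_0}$, but nothing forces $f$ to vanish on, or even be degenerate on, any single root space: $f$ can be nonzero on every root space simultaneously. When the root spaces in $W^s_{X_0}$ are $1$-dimensional (split groups, e.g. all $\lieg_\alpha$ in $\mathfrak{sl}(n,\re)$), you then capture no coroot at all, and the argument collapses; your closing paragraph implicitly assumes the escaping line sits inside one root space $\lieg_\beta$, which is precisely what is unjustified. Moreover, the repair you propose rests on a false implication: an $\mathfrak{sl}(2,\re)$-\emph{subalgebra} whose orbit looks like $Co^2$ does not force an $SL(2,\re)$-\emph{factor} of $G$ --- Jacobson--Morozov produces such subalgebras in every semisimple algebra, and the paper's own proof of Proposition \ref{Key} constructs exactly this configuration inside groups with no $SL(2,\re)$ factor. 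Finally, the root-theoretic spanning claim in the singular case (that the coroots of roots nonvanishing on $X_0$ span a subspace containing $X_0$) is a genuine lemma that you leave unproven.

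It is worth seeing how the paper sidesteps all of this, because the difference is structural: the paper does not try to prove your stronger claim that the \emph{original} $X_0$ is isotropic at the \emph{original} Kowalsky point $p$. Instead, the hypothesis on $SL(2,\re)$ enters through Lemma \ref{centerfact} ($\liea\cap\lieg_p=0$) and through the bound $\dim W^s_X>1$, which guarantees some nonzero $Y_0\in W^s_X\cap\lieg_p$; crucially, $Y_0$ is an arbitrary nilpotent vector and need not lie in any root space, so the alignment problem that sinks your argument never arises. The paper then \emph{changes both the element and the point}: it embeds $Y_0$ in an $sl_2$-triple $(X_0,Y_0,Z_0)$, restricts to the orbit $\Sigma$ of the corresponding subgroup (a copy of $Co^2$ through $p$), uses non-properness of $\exp(\re X_0)$ on $\Sigma$ to produce a new point $p_0$, reapplies the first part of Proposition \ref{Kowalsky} there, and concludes that $X_0$ itself is isotropic at $p_0$ via the two-dimensional geometry of Fact \ref{2-orbites}. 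This change of point, made possible by Jacobson--Morozov, is the idea your proposal is missing; without it, or without genuine proofs of your alignment and spanning claims, the argument is incomplete.
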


\subsection{Starting fact}

The non-properness of the  action of $G$ leads to a fundamental fact, 
already observed in  \cite{kowalsky}  for Lorentzian metrics, which is the
existence of $p \in M$ and $X \in \liea$
such that    $W^s_{X}$ is isotropic at 
$p$. Let us recall its proof.

\begin{proposition} \label {Kowalsky} \cite{kowalsky}
Let $\liea$ be a Cartan subalgebra of $\lieg$. 
\begin{enumerate}
\item If the flow of $X \in \liea$ acts non-properly, and $p_k \to p$ 
is a non-escaping sequence for the action of $\phi_X^t$, 
 then the stable space $W^s_X$ is isotropic at $p$.

\item More generally, if  $p_k \to p$ is 
a non-escaping sequence for the $A$-action, then  there exists $X \in \mathfrak{a}$ such that 
$W^s_{X}$ is isotropic at  $p \in M$.
\end{enumerate}
\end{proposition}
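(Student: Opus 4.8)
The plan is to reduce everything to one formula relating the differential of the flow $\phi_X^t$ to the adjoint action. For any $X,Y\in\lieg$ and any $m\in M$ one has
$$ d(\phi_X^t)_m(Y_m)=\bigl(\mathrm{Ad}(\exp tX)\,Y\bigr)_{\phi_X^t(m)}, $$
which follows by differentiating the conjugation identity $\phi_X^t\circ\phi_Y^s\circ\phi_X^{-t}=\phi_{\mathrm{Ad}(\exp tX)Y}^{\,s}$ at $s=0$ (here I use that $\rho$ is a homomorphism and that $X,Y$ generate the corresponding flows). The whole point is that if $Y$ lies in a restricted root space $\lieg_\alpha$, then $\mathrm{Ad}(\exp tX)Y=e^{\alpha(X)t}Y$, so the stable root spaces are contracted exponentially as $t\to+\infty$; combined with the isometric invariance of the positive semi-definite form $h$, this will force isotropy in the limit.

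For part (1), let $\phi_X^{t_k}$ be a return sequence for $p_k\to p$. Replacing $X$ by $-X$ if necessary (which exchanges $W^s_X$ and $W^u_X$) I may assume $t_k\to+\infty$, and passing to a subsequence I may assume $q_k:=\phi_X^{t_k}(p_k)\to q$. Fix $Y\in W^s_X$ and write $Y=\sum_{\alpha(X)<0}Y_\alpha$ with $Y_\alpha\in\lieg_\alpha$. Evaluating the formula at $m=p_k$, $t=t_k$ gives
$$ d(\phi_X^{t_k})_{p_k}(Y_{p_k})=\sum_{\alpha(X)<0}e^{\alpha(X)t_k}\,(Y_\alpha)_{q_k}. $$
Because $\phi_X^{t_k}$ is an isometry, $h_{p_k}(Y_{p_k},Y_{p_k})$ equals the squared $h_{q_k}$-seminorm of the right-hand side. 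Each coefficient $e^{\alpha(X)t_k}\to0$, each vector $(Y_\alpha)_{q_k}\to(Y_\alpha)_q$ stays bounded, and $h_{q_k}\to h_q$; by the triangle inequality for the seminorm the right-hand side tends to $0$. Hence $h_{p_k}(Y_{p_k},Y_{p_k})\to0$, while continuity gives $h_{p_k}(Y_{p_k},Y_{p_k})\to h_p(Y_p,Y_p)$, so $h_p(Y_p,Y_p)=0$. Since the isotropic vectors of a positive semi-definite form constitute a linear subspace (its kernel), this proves that $W^s_X$ is isotropic at $p$.

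For part (2), the return sequence is now $g_k=\exp(X_k)$ with $X_k\in\liea$ and $\|X_k\|\to\infty$. After a subsequence, $X_k/\|X_k\|\to X$ for some unit vector $X\in\liea$, and $q_k=g_k(p_k)\to q$. As the restricted roots span $\liea^*$ and occur in pairs $\pm\alpha$, some root is strictly negative on $X$, so $W^s_X\neq0$. For every such root, $\alpha(X_k)=\|X_k\|\,\alpha\!\left(X_k/\|X_k\|\right)\to-\infty$, and since there are only finitely many roots this decay is uniform. The computation of part (1) now applies verbatim with $e^{\alpha(X)t_k}$ replaced by $e^{\alpha(X_k)}$, giving $h_p(Y_p,Y_p)=0$ for every $Y\in W^s_X$; thus $W^s_X$ is isotropic at $p$.

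The only genuinely delicate point is the passage to the limit of the seminorms: because $h$ is degenerate there is no honest norm, but working in a fixed local trivialization around $q$ makes each quantity $h_{q_k}\!\left((Y_\alpha)_{q_k},(Y_\beta)_{q_k}\right)$ a continuous function of $k$, and it is precisely the positive semi-definiteness of $h$ that supplies the triangle inequality and the linearity of the isotropy condition. Everything else is routine bookkeeping with the root-space decomposition.
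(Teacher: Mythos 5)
Your proof is correct and takes essentially the same route as the paper's: the equivariance identity $d(\phi_X^t)_m(Y_m)=(\mathrm{Ad}(\exp tX)Y)_{\phi_X^t(m)}$ giving exponential contraction of the stable root spaces, boundedness of $h$ along the convergent sequence $(q_k)$, continuity to pass to the limit, and in part (2) normalization $X_k/\Vert X_k\Vert \to X$ followed by the same computation with $e^{\alpha(X_k)}$. Your extra points --- the triangle-inequality treatment of sums of root vectors, the sign normalization $t_k\to+\infty$ (replacing $X$ by $-X$), and the remark that $W^s_X\neq 0$ --- are refinements of details the paper leaves implicit, not a different argument.
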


\begin{proof}
\begin{enumerate}
\item  Denote $\phi_X^t = exptX$ the flow of $X$, and let $(t_k)$ be
a return time sequence  for $(p_k)$, i.e. $\phi_X^{t_k}$ is a return  sequence of $p_k$, what means that  $q_k = \phi_X^{t_k}(p_k)$ 
stay in a compact subset of $M$. 

Let  $Y \in \lieg_\alpha$, then $[X, Y] = \alpha(X) Y$, hence for any $x \in M$, $D_x\phi_X^t Y_x =
e^{t \alpha (X)} Y_{\phi_X^t(x)}$. Assume that $\alpha(X)<0$, then:
$$ h_{p_k}(Y_{p_k},Y_{p_k})=h_{q_k}(D_{p_k}\phi_X^{t_k} (Y_{p_k}),D_{p_k}\phi_X^{t_k}(Y_{p_k}))
 =e^{2t_k\alpha (X)}h_{q_k}(Y_{q_k},Y_{q_k})$$
 On the left hand side, passing to the limit yields   $h_{p}(Y_p,Y_p)$.

On the right hand side, since $(q_k)$ lie in a compact set, 
$h_{q_k}(Y_{q_k}, Y_{q_k})$ is bounded. Therefore, since
$\alpha (X) <0$, this right hand term tends to 0, yielding $h_{p}(Y_p,Y_p)=0$. This proves that $W^s_X$ is isotropic at $p$.




\item


Let $(X_k)$ be a sequence
in $\mathfrak{a} $ such that $exp(X_k)$ is a return  sequence for $(p_k)$. 
Let $\Vert  . \Vert$ be a
Euclidian norm on $\liea$  and, considering if necessary a subsequence, assume  
$(\frac{X_k}{\Vert X_k\Vert})$ converges  to some $X\in \liea$. As
above, one proves that   that $W^s_{X}$ is isotropic at $p$.
\end{enumerate}
\end{proof}

\begin{remark}\label{semisimplesabilizer} This result is nothing but  a generalization of the linear (punctual) easy  fact:  If  a matrix $A$    preserves a lightlike scalar  product, then its corresponding stable and unstable spaces are isotropic. In our particular case, if $X \in \liea \cap \lieg_p$ (i.e. $X$ stabilizes $p$), then both of $W^s_{X}$ and $W^u_{X}$ are isotropic at $p$.
\end{remark}

\subsection{Proof of Proposition \ref{Key}}
The proof   follows from several observations.   The simplest one
 is that  for  lightlike metrics (in contrast with the Lorentz case),  the isotropic direction is unique on each
tangent space $T_pM$.  
Furthermore, it  coincides  with the non-trivial  eigenspace (if any)  of  any
 infinitesimal  isometry fixing $p$. The hypothesis made in Proposition \ref{Key}, that $G$ has no factor locally isomorphic to $SL(2,\re)$ will be only used in Lemma \ref{centerfact}.

\begin{lemma} For any $p \in M$, the subspace of isotropic vectors  $\mathfrak{s}_p$ is a Lie subalgebra of $\mathfrak{g}$. \end{lemma}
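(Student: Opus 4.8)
The plan is to describe $\mathfrak{s}_p$ explicitly and then reduce bracket-closure to an equivariance property of the isotropy action. Since $h_p$ is positive semi-definite with exactly one-dimensional kernel $N_p$, a tangent vector $v \in T_pM$ satisfies $h_p(v,v)=0$ if and only if $v \in N_p$. Hence $X$ is isotropic at $p$ precisely when $X_p = \lambda_p(X) \in N_p$, i.e. $\mathfrak{s}_p = \lambda_p^{-1}(N_p)$. In particular $\lieg_p = \ker \lambda_p \subseteq \mathfrak{s}_p$, and $\lambda_p$ induces an isomorphism $\mathfrak{s}_p/\lieg_p \cong N_p \cap \lambda_p(\lieg)$, a subspace of the line $N_p$; so $\mathfrak{s}_p/\lieg_p$ has dimension $0$ or $1$.

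The key step will be to prove $[\lieg_p, \mathfrak{s}_p] \subseteq \mathfrak{s}_p$. For this I would use that $\lambda_p$ is equivariant for the adjoint action of $G_p$ on $\lieg$ and the isotropy action on $T_pM$: for $g \in G_p$ one has $\lambda_p(Ad_g X) = D_pg\,(X_p)$, because the flow of $Ad_g X$ is $g\,\phi_X^t\,g^{-1}$ and $g$ fixes $p$. Since each $g \in G_p$ is an isometry fixing $p$, the differential $D_pg$ preserves $h_p$, hence its kernel $N_p$. Therefore $X_p \in N_p$ forces $\lambda_p(Ad_g X) \in N_p$, so $\mathfrak{s}_p$ is $Ad(G_p)$-invariant; differentiating along $g=\exp(tZ)$ with $Z \in \lieg_p$ and using $\frac{d}{dt}\big|_{0} Ad_{\exp tZ}X = [Z,X]$ gives $[\lieg_p,\mathfrak{s}_p]\subseteq \mathfrak{s}_p$.

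Finally I would conclude by a short computation. If $\mathfrak{s}_p = \lieg_p$ we are done, since stabilizer algebras are subalgebras. Otherwise $\mathfrak{s}_p = \lieg_p \oplus \RR X_0$ for any $X_0 \in \mathfrak{s}_p \setminus \lieg_p$. Writing two elements of $\mathfrak{s}_p$ as $a + \alpha X_0$ and $b + \beta X_0$ with $a,b \in \lieg_p$ and $\alpha,\beta \in \RR$, their bracket equals $[a,b] + \alpha[X_0,b] + \beta[a,X_0]$, because $[X_0,X_0]=0$; here $[a,b] \in \lieg_p$ while the other two terms lie in $\mathfrak{s}_p$ by the previous step, so $\mathfrak{s}_p$ is closed under the bracket.

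The point where the lightlike hypothesis is genuinely used is the one-dimensionality of $N_p$: it is what forces $\mathfrak{s}_p$ to exceed the subalgebra $\lieg_p$ by at most a single direction, so that the only bracket not already controlled by the equivariance step, namely $[X_0,X_0]$, vanishes for free. I do not expect a real obstacle; the only technical care needed is verifying the equivariance formula $\lambda_p \circ Ad_g = D_pg \circ \lambda_p$ on $G_p$ and the elementary fact that positive semi-definiteness makes ``isotropic'' equivalent to ``lying in the kernel'', neither of which is difficult.
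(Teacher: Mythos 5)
Your proof is correct, and it takes a genuinely different route from the paper's. The paper argues dynamically on the manifold: for $X,Y\in\mathfrak{s}_p$ it writes $[X,Y]_p$ as the limit of $\tfrac{1}{t}\bigl[d\phi_X^{-t}(Y_{\phi_X^{t}(p)})-Y_p\bigr]$, notes that $\phi_X^{t}(p)$ stays in the null leaf ${\mathcal N}_p$, that $Y$ is isotropic at those points (near $p$ the same one-dimensional leaf is swept out by the integral curve of $Y$), and that the isometry $d\phi_X^{-t}$ preserves isotropy, so each difference quotient, and hence the limit, lies in the kernel line $N_p$. Your proof is instead purely pointwise and algebraic: the only inputs are that $\mathfrak{s}_p=\lambda_p^{-1}(N_p)$ is a linear subspace containing $\lieg_p$ with codimension at most one, and that the isotropy representation of $G_p$ preserves $N_p$; the equivariance $\lambda_p\circ Ad_g = D_pg\circ\lambda_p$ then gives $Ad(G_p)$-invariance of $\mathfrak{s}_p$, differentiation gives $[\lieg_p,\mathfrak{s}_p]\subseteq\mathfrak{s}_p$, and the expansion of a general bracket closes the argument because the only uncontrolled term $[X_0,X_0]$ vanishes. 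What your version buys: it sidesteps the one delicate point of the paper's computation, namely justifying why $Y_{\phi_X^{t}(p)}$ is isotropic, which tacitly uses that the integral curves of $X$ and $Y$ through $p$ fill out the same one-dimensional leaf and needs a separate word when $Y_p=0$; in your setup that case is absorbed into the clean dichotomy $\mathfrak{s}_p=\lieg_p$ or $\mathfrak{s}_p=\lieg_p\oplus\RR X_0$. What the paper's version buys: it is a two-line computation that never invokes the group structure (no $G_p$, no adjoint action, no identification of $\lieg_p$ with $\ker\lambda_p$), so it applies verbatim to any Lie algebra of Killing fields, and it exhibits the geometric mechanism (isotropy propagates along null leaves under isometries) that the paper reuses elsewhere.
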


\begin{proof}
Let $X,Y \in \mathfrak{s}_p$, and let $\phi_X^t$ be the isometric
flow generated by $X$ on $M$, then:
\begin{displaymath}[X,Y]_p=\lim_{t \rightarrow
0}\frac{1}{t}[d\phi_X^{-t}(Y_{\phi_X^{t}(p)})-Y_p]\end{displaymath}
since $X,Y$ are isotropic at $p$,  their integral curves at $p$
are supported by the  null leaf  ${\mathcal N}_p$,   and thus $Y_{\phi_X^{t}(p)}$ is isotropic. Since $\phi_X^{-t}$ is an
isometry, $d\phi_X^{-t}(Y_{{\phi_X^{t}}(p)})$ is also isotropic.
\end{proof}

\begin{lemma}\label{dimension.orbite.lightlike}
$G$  stabilizes no  $p \in M$.
\end{lemma}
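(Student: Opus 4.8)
Were the action proper, every point-stabilizer would be compact and a global fixed point would force $G$ itself to be compact; it is precisely the assumed non-properness that makes the statement nontrivial, so the argument must proceed through the linear isotropy representation. Suppose then, for contradiction, that $G$ fixes some $p\in M$. Linearizing the $G$-action at $p$ gives a representation $\rho_p\colon G\to GL(T_pM)$, and since each $\rho(g)$ is an isometry of the lightlike scalar product $h_p$ we have $\rho_p(G)\subset O(0,n)$. Recall from \S\ref{transversalement.riemannien} the structure $O(0,n)\cong(\re^\ast\times O(n))\ltimes\re^n$, whose reductive factor $O(n)$ is compact and whose radical $\re^\ast\ltimes\re^n$ is solvable. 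Since $G$ is semi-simple, $\rho_p(G)$ is semi-simple, hence conjugate into the compact factor $O(n)$; in particular $\rho_p(G)$ is relatively compact.

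The clean way to finish would be to know that the isotropy representation of the stabilizer is faithful modulo center: then $\rho_p$ would embed $G$ (up to its finite center) into $O(0,n)$, and relative compactness of the image would force $G$ compact, contradicting our hypothesis. So the lemma is equivalent to this faithfulness, and its failure can only come from the kernel: the differential $d\rho_p\colon\lieg\to\lieo(0,n)$ annihilates every non-compact simple ideal of $\lieg$, because a non-compact simple Lie algebra has no nontrivial finite-dimensional representation with relatively compact image (such a restriction would have discrete, hence central, kernel and would therefore exhibit a non-compact simple group inside the compact $O(n)$). As $G$ is non-compact, at least one such ideal exists; let $S\subset G$ be the corresponding connected simple factor. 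Then $S\subset\ker\rho_p$, so $S$ fixes $p$ with $d_pg=\mathrm{id}$ on $T_pM$ for every $g\in S$.

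Ruling out such an $S$ is the step I expect to be the main obstacle. By local faithfulness a whole factor cannot act trivially, so it suffices to prove that an isometric action of $S$ fixing $p$ with trivial linearization is trivial near $p$; but Riemannian-type rigidity is unavailable here, since lightlike isometries are not determined by their $1$-jet (witness the basic metrics of \S\ref{transversalement.riemannien}). The route I would take uses that $S$ is perfect: restrict the action to the $S$-invariant one-dimensional null leaf $\mathcal{N}_p$, on which $S$ fixes $p$ tangentially to the identity, so that a Thurston-stability type argument forces the $S$-action on $\mathcal{N}_p$ to be trivial near $p$; one then propagates this triviality across the transverse Euclidean directions, on which $\rho_p|_S$ already vanishes, to a full neighborhood of $p$, and finally to all of $M$ by connectedness. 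Converting the first-order triviality into genuine local triviality, in the absence of any canonical connection attached to $h$, is the delicate point on which the whole argument hinges.
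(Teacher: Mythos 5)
Your first two steps are sound and essentially coincide with the paper's own sublemma that any homomorphism $\lieg\to\lieo(0,n)$ is trivial on non-compact simple ideals: the paper argues by composing with the projection $\lieo(0,n)\to\lieo(n)$ (whose kernel is solvable), while you conjugate the semisimple image into the Levi factor $O(n)$; both versions are correct. The genuine gap is the final step, and you have diagnosed it yourself without closing it: nothing in your sketch converts ``trivial $1$-jet of $S$ at $p$'' into ``trivial action near $p$''. This conversion cannot be done using the lightlike metric alone, because lightlike isometries are not determined by any finite jet at a point: for the basic metrics of \S\ref{transversalement.riemannien}, maps $(t,l)\mapsto(f(t,l),l)$ with $f$ tangent to the identity at a point give nontrivial isometries fixing that point with trivial derivative. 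Moreover, Thurston stability requires a finitely generated group, which a connected Lie group is not, and even if one pushed a Lie-algebra version through all jets, a smooth action could still be ``flat'' and nontrivial, so the propagation step genuinely fails as stated.

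The idea you are missing is the paper's pivot away from rigidity of $h$: instead of trying to rigidify the whole factor $S$, restrict to a \emph{connected compact} subgroup $K\subset S$ (nontrivial, since a non-compact semisimple group with finite center has a maximal compact subgroup of positive dimension --- note $Z(S)\subset Z(G)$ is finite). Averaging any Riemannian metric over the Haar measure of $K$ produces a $K$-invariant \emph{Riemannian} metric on $M$, and for that metric the classical rigidity you dismissed is available: a Riemannian isometry of a connected manifold fixing $p$ with trivial derivative at $p$ is the identity, by linearization through the exponential map. Since $K\subset\ker\rho_p$, every element of $K$ fixes $p$ with trivial derivative, hence $K$ acts trivially on $M$, contradicting local faithfulness. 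So the proof does not need any rigidity of the degenerate metric at this stage; it trades $h$ for an auxiliary invariant Riemannian metric attached to the compact part of the group, which is exactly how the paper concludes.
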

\begin{proof}

Suppose by contradiction  that $G$
stabilizes  $p \in M$, then G acts on $T_{p}M$ by: 
$\rho:g \mapsto d_{p }g \in GL(T_{p}M).$ Since $G$ preserves the lightlike scalar  product $h_p$, it is mapped by
$\rho$ into a
subgroup of $O(0,n)$. Thus, at the level of Lie algebras, we get a homomorphism $d\rho : \lieg \to {\mathfrak o}(0,n)$. Now, we prove:
\begin{sublemma}
Any homomorphism from $\lieg$ to ${\mathfrak o}(0,n)$ is trivial.
\end{sublemma}
\begin{proof}
Without loss of generality,  we can suppose that $\lieg$ is simple. Let
$\lambda$ be a homomorphism from $\lieg$  to ${\lieo}(0,n)$, $\pi$ the
projection from ${\lieo}(0,n)$ to ${\lieo}(n)$. Consider the homomorphism
$\lambda \circ \pi: \lieg \longrightarrow {\lieo}(n)$. Since $\lieg$ is simple and
noncompact, it has no non-trivial homomorphism into the Lie algebra of a compact group;
this implies that $\lambda \circ \pi$ is trivial. So, $\lieg$ is mapped by
$\lambda$ into the kernel ${\lieg}_{0}$ of $\pi $, that is the algebra of matrices of
the form $\left(
\begin{array}{cc}
\mu  &
\begin{array}{ccc}
x_{1} & ... & x_{n}%
\end{array}
\\
\begin{array}{c}
0 \\
. \\
. \\
0%
\end{array}
& 0%
\end{array}%
\right). $ Since  ${\lieg}_{0}$ is  solvable and
$\lieg$ is simple, we conclude that $\lambda $ is trivial.
\end{proof}

As a corollary, the $\rho$-image of any connected compact subgroup $K \subset G$ is trivial. However such $K$ preserves a Riemannian metric. 
But on a connected manifold $M$, a 
Riemannian isometry which fixes a point and has a trivial
derivative at  this point, is the identity on $M$. This is easily seen since in the neighbourhood of any fixed point, a Riemannian isometry is linearized thanks to the exponential map.
Hence $K$ acts trivially on $M$, and therefore $G$ does not act faithfully, which contradicts  our hypothesis and completes the proof of our lemma.
\end{proof}

\begin{lemma}\label{centerfact}
If $G$ has no factor locally isomorphic to $SL(2,\re)$, then no Cartan subalgebra $\liea$  meets  the stabilizer  
subalgebra: $\liea \cap \lieg_p = \{ 0
\}$ for any $p \in M$.
\end{lemma}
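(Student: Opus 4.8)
The plan is to argue by contradiction: suppose $\liea\cap\lieg_p\neq\{0\}$ and fix $0\neq X\in\liea\cap\lieg_p$. The starting point is a purely linear observation refining Remark \ref{semisimplesabilizer}. Since $X\in\lieg_p$, the endomorphism $ad(X)$ acts on the Euclidean quotient $\lieg/\mathfrak{s}_p$ by a skew-symmetric matrix, while $X\in\liea$ forces $ad(X)$ to be $\re$-diagonalizable; a skew-symmetric operator with real spectrum vanishes, so $ad(X)$ is zero on $\lieg/\mathfrak{s}_p$. Equivalently $W^s_X\oplus W^u_X\subset\mathfrak{s}_p$, and every root space $\lieg_\alpha$ with $\alpha(X)\neq 0$ is isotropic at $p$. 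I would then pass to the induced (still semisimple) operator $ad(X)$ on the orbit tangent space $\lieg/\lieg_p$, filtered by the null line $\mathfrak{s}_p/\lieg_p$ with Euclidean quotient $\lieg/\mathfrak{s}_p$: its eigenvalues are $0$ on the second piece and a single scalar $\mu(X)$ on the first, so $ad(X)$ has rank at most one on $\lieg/\lieg_p$.

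From this rank bound I extract the root-theoretic picture. Grading $\oplus_{\alpha(X)\neq 0}\lieg_\alpha$ by the eigenvalue $\alpha(X)$, all eigenspaces but at most one lie in $\lieg_p$, and the exceptional eigenvalue $c_0=\mu(X)$ meets $\lieg_p$ in codimension one. Next I would show every coroot $H_\alpha$ with $\alpha(X)\neq 0$ lies in $\lieg_p$: for a pair $\pm\alpha$ both non-exceptional this is immediate from $\lieg_{\pm\alpha}\subset\lieg_p$ and $H_\alpha\in[\lieg_\alpha,\lieg_{-\alpha}]$, and for the exceptional direction it follows from Lemma \ref{nilpotantstabilizer}(2), since the opposite root space then sits in $W^s_X\cap\lieg_p$ while $\lieg_\alpha\subset\mathfrak{s}_p$. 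Choosing $X$ generic in $\lieb:=\liea\cap\lieg_p$ (so that $\alpha(X)=0$ exactly when $\alpha|_\lieb\equiv 0$), this shows $\lieb$ is spanned by the coroots $\{H_\alpha:\alpha|_\lieb\neq 0\}$; since coroots respect the decomposition of $\lieg$ into simple factors, $\lieb$ splits accordingly and I may fix a simple factor $\lieg_\star\not\cong\mathfrak{sl}(2,\re)$ with $\lieb\cap\liea_\star\neq\{0\}$.

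Two cases then remain. If the orbit is \emph{spacelike} at $p$ (that is, $\mathfrak{s}_p=\lieg_p$ and $c_0$ is absent), the isotropy representation $\rho$ lands in a compact orthogonal group and $d\rho(X)=ad(X)|_{\lieg/\lieg_p}=0$; by Riemannian rigidity $\exp(\re X)$ then fixes the Riemannian orbit pointwise, so the noncompact ideal generated by $X$ lies in the kernel of the action on the orbit, putting a whole simple factor inside $\lieg_p$. This contradicts local faithfulness exactly as in the Sublemma of the proof of Lemma \ref{dimension.orbite.lightlike} (note that this case does not use the hypothesis on $\mathfrak{sl}(2,\re)$). In the remaining \emph{lightlike} case $c_0\neq 0$, the goal is to prove that $\lieg_\star\cap\lieg_p$ has codimension one in $\lieg_\star$: the escaping data is a single $c_0$-eigendirection inside one root space, and using Lemma \ref{nilpotantstabilizer}(1) together with the simplicity of $\lieg_\star$ one forces $\liea_\star$ and the compact centralizer $\mathfrak{m}_\star$ into $\lieg_p$, leaving codimension one. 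Since the only simple Lie algebra admitting a codimension-one subalgebra is $\mathfrak{sl}(2,\re)$, this is the desired contradiction.

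The hard part will be this last step in the higher-rank situation. The eigenvalue argument is silent about root spaces $\lieg_\alpha$ with $\alpha|_\lieb\equiv 0$ (the \emph{wall} roots), whose membership in $\lieg_p$ is not automatic; in real rank one there are no such roots and the computation closes cleanly, but in general one must independently confine all wall root spaces to $\lieg_p$ — again through the bracket relations of Lemma \ref{nilpotantstabilizer} and simplicity — so that the single null direction $c_0$ is the \emph{only} escape and the codimension-one conclusion, hence the $\mathfrak{sl}(2,\re)$-characterization, applies inside $\lieg_\star$.
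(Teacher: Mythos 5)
Your opening moves are sound: the observation that $ad(X)$ is simultaneously $\re$-diagonalizable (as $X\in\liea$) and skew-symmetric (as $X\in\lieg_p$) on $\lieg/\mathfrak{s}_p$, hence zero there, correctly recovers Remark \ref{semisimplesabilizer}; the rank-one bound for $ad(X)$ on $\lieg/\lieg_p$ is valid; and your spacelike case does close (a simple factor on which $X$ projects nontrivially would act trivially on the Riemannian orbit, hence fix $p$, contradicting Lemma \ref{dimension.orbite.lightlike}). But the lightlike case --- which is the actual content of the lemma --- is not proved, and you say so yourself. Your eigenvalue argument is silent about the root spaces $\lieg_\alpha$ with $\alpha|_{\lieb}\equiv 0$, about $\liem_\star$, and about the part of $\liea_\star$ not spanned by the coroots you control; the assertion that Lemma \ref{nilpotantstabilizer}(1) ``together with simplicity'' forces $\liea_\star\oplus\liem_\star$ into $\lieg_p$ is a claim, not an argument (an inclusion like $\liem\subset[\lieg_\alpha,\lieg_{-\alpha}]$, which the paper does invoke later in its rank-one analysis, is exactly the kind of fact that is unavailable in higher rank). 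Since the codimension-one conclusion, and with it the $\mathfrak{sl}(2,\re)$ characterization, hinges entirely on confining all of these pieces to $\lieg_p$, the decisive step of your proof is missing, not merely left as routine.

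The paper avoids this bookkeeping altogether with one structural idea absent from your proposal: by Remark \ref{semisimplesabilizer}, both $W^s_X$ and $W^u_X$ are isotropic at $p$, and the subalgebra $\lien$ they generate is an \emph{ideal} of $\lieg$ (a general fact, see \cite{kowalsky}), hence a semisimple factor. Because $\mathfrak{s}_p$ is a Lie subalgebra, $\lien\subset\mathfrak{s}_p$, so $\lien$ acts on the one-dimensional null leaf ${\mathcal N}_p$; this action is faithful, since its kernel would be a semisimple group fixing $p$, contradicting Lemma \ref{dimension.orbite.lightlike}; and the only semisimple algebra acting faithfully on a $1$-manifold is $\mathfrak{sl}(2,\re)$, which would be a factor of $\lieg$ --- the desired contradiction, with no case analysis and no rank restriction. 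If you want to salvage your route, it is precisely this ideal property that lets one promote pointwise information about $W^s_X$, $W^u_X$ to a statement about a whole factor of $\lieg$; without it, the wall roots remain out of reach.
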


\begin{proof}
Assume by contradiction that,
$\liea \cap \lieg_p \not = \{  0  \}$, and let us take $X \not = 0$
in this intersection.  Apply  Remark \ref{semisimplesabilizer} to
$X$ to get that the subspaces $W^s_{X}$ and $W^u_{X}$ are both isotropic at
$p$.  It is a general fact that $\lien$, the Lie subalgebra generated by  $W^s_{X}$ and $W^u_{X}$, is an ideal of $\lieg$  (see for instance \cite{kowalsky}), and it is in particular 
a factor of $\lieg$. It acts on the 
   1-manifold    ${\mathcal N}_p$. This action is faithful, otherwise its kernel $\lies$ would be the Lie algebra of a semi-simple group $S \subset G$, which would have fixed points on $M$, in contradiction with Lemma \ref{dimension.orbite.lightlike}. Now, 
 the 
only the semi-simple algebra acting  faithfully
on a 1-manifold is $\mathfrak{sl}(2,\re)$. This contradicts our
hypothesis that $\lieg$ has no such factor.
 \end{proof}

\begin{lemma}
Let $H$ be a Lie group having $\mathfrak{sl}(2,\re)$ as a Lie algebra.
\begin{enumerate}
\item If $H$ is linear, then it is isomorphic to either $SL(2,\re)$ or $PSL(2,\re)$.
\item If $H$ is a subgroup of a Lie group $G$ with finite center, then it is a finite covering of $PSL(2,\re)$.

\end{enumerate}
\end{lemma}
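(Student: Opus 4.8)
The plan is to reduce both assertions to a single representation-theoretic fact about the universal cover $\widetilde{SL}(2,\re)$, whose center is infinite cyclic. First I would recall the classification of connected Lie groups with Lie algebra $\mathfrak{sl}(2,\re)$: writing $z$ for a generator of $Z(\widetilde{SL}(2,\re)) \cong \ZZ$, every such group is a quotient $\widetilde{SL}(2,\re)/\langle z^n\rangle$ with $n \geq 0$, where $n=0$ gives $\widetilde{SL}(2,\re)$ (infinite center), $n=1$ gives $PSL(2,\re)$, and $n=2$ gives $SL(2,\re)$, since $\ker(\widetilde{SL}(2,\re) \to SL(2,\re)) = \langle z^2\rangle$. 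In this language, ``$H$ is a finite covering of $PSL(2,\re)$'' is exactly the range $n \geq 1$, equivalently ``$H$ has finite center''. I will assume $H$ connected, as the conclusions concern connected groups.

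The key fact I would isolate and prove is: \emph{every finite-dimensional representation of $\widetilde{SL}(2,\re)$ factors through $SL(2,\re)$; equivalently, $z^2$ acts trivially in any such representation.} This follows by complexification: the finite-dimensional representations of $\mathfrak{sl}(2,\re)$ are restrictions of those of $\mathfrak{sl}(2,\CC)$, and since $SL(2,\CC)$ is simply connected, each integrates to $SL(2,\CC)$ and hence to the subgroup $SL(2,\re)$; thus at the group level the representation already factors through $SL(2,\re) = \widetilde{SL}(2,\re)/\langle z^2\rangle$.

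For (1), given a faithful finite-dimensional representation $\rho$ of $H = \widetilde{SL}(2,\re)/\langle z^n\rangle$, I would pull it back along $\widetilde{SL}(2,\re) \to H$ to a representation of $\widetilde{SL}(2,\re)$; by the key fact its kernel contains $z^2$, while faithfulness of $\rho$ forces this kernel to be exactly $\langle z^n\rangle$. Hence $z^2 \in \langle z^n\rangle$, i.e. $n$ divides $2$, giving $H \cong PSL(2,\re)$ ($n=1$) or $H \cong SL(2,\re)$ ($n=2$). For (2), with $H \subseteq G$ and $G$ connected of finite center, I would apply the key fact to the finite-dimensional representation $\mathrm{Ad}_G|_H$: its pullback to $\widetilde{SL}(2,\re)$ kills $z^2$, so the image of $z^2$ in $G$ lies in $H \cap \ker(\mathrm{Ad}_G) = H \cap Z(G)$. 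Were $H$ of infinite center (the case $n=0$, $H = \widetilde{SL}(2,\re)$), this element $z^2$ would have infinite order in $G$, contradicting the finiteness of $Z(G)$; hence $n \geq 1$ and $H$ is a finite cover of $PSL(2,\re)$.

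The only genuinely non-formal input is the key representation-theoretic fact; once it is in hand, both parts are short diagram-chases with the central generator $z$. I expect the bookkeeping around the center to be the only place requiring care — namely pinning down that $\ker(\widetilde{SL}(2,\re)\to SL(2,\re)) = \langle z^2\rangle$ while $\langle z\rangle$ cuts out $PSL(2,\re)$ — together with the (harmless, in the connected setting) identification $\ker \mathrm{Ad}_G = Z(G)$.
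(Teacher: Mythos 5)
Your proposal is correct. It rests on the same key fact as the paper's proof --- namely that every finite-dimensional representation of $\mathfrak{sl}(2,\re)$ integrates to the group $SL(2,\re)$ itself, not merely to its universal cover --- and part (2) is handled in both cases by feeding the adjoint representation of $G$ (with its finite kernel) into that fact. The difference lies in how the key fact is established: the paper invokes the explicit classification of irreducible $\mathfrak{sl}(2,\re)$-representations as symmetric powers of the standard representation (spaces of homogeneous polynomials in two variables, on which $SL(2,\re)$ visibly acts, and $PSL(2,\re)$ acts exactly when the degree is even), whereas you obtain it by complexifying to $\mathfrak{sl}(2,\CC)$ and using that $SL(2,\CC)$ is simply connected, so that the integrated representation restricts to $SL(2,\re)$. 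Your route is more conceptual, avoids the classification entirely, and generalizes to any real form whose complexification is simply connected (e.g.\ $\mathfrak{sl}(n,\re)$); the paper's route is more concrete and gives the extra parity information about which representations descend to $PSL(2,\re)$. You are also more explicit than the paper about the covering-theoretic bookkeeping --- identifying $\ker\bigl(\widetilde{SL}(2,\re)\to SL(2,\re)\bigr)=\langle z^2\rangle$ and phrasing both conclusions as divisibility conditions on $n$ --- where the paper compresses this into two sentences; your caveats (connectedness of $H$, connectedness of $G$ in identifying $\ker \mathrm{Ad}_G=Z(G)$) are harmless in the paper's setting, where $H$ is generated by an $\mathfrak{sl}_2$-triple and $G$ is a connected semi-simple group.
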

\begin{proof}
The point is that all the representations 
of the Lie algebra $\mathfrak{sl}(2, \re)$ integrate to  actions 
of the group 
$SL(2, \re)$  itself (and not merely its universal cover).  Indeed, the classical classification asserts that all the irreducible representations are isomorphic to symmetric powers of the standard representation, or equivalently to representations on spaces of  homogeneous polynomials 
of a given degree,  in two variables $x$ and $y$ (see for instance \cite{Knapp}).   Clearly, $SL(2, \re)$
acts on these polynomials,  and  $PSL(2, \re)$ acts iff the degree 
is even. 
 For the last point, observe that the adjoint representation of $G$ 
has finite Kernel. 
\end{proof}

\subsubsection*{End of the proof of Proposition \ref{Key}}

From Proposition \ref{Kowalsky}, there exist $X \in \liea$
and    $p \in M$, such that $W^s_X$ is isotropic at $p$. Since
$\mathfrak{g}$ has no local factor isomorphic to
$\mathfrak{sl}(2,\re)$,  we have $\dim  W^s_X>1$ (otherwise 
the subalgebra $ \liea \oplus \Sigma_{\alpha(X) \geq 0} \lieg_\alpha$ 
is supplementary to $W^s_X$ and would have codimension 1, and therefore $\lieg$ acts on a 1-manifold).
 For a lightlike metric,  an isotropic space 
has dimension at most 1, so that the evaluation of $W^s_X$ at $p$
has at most dimension 1, and thus $W^s_X$ contains at least a non-zero vector 
$Y_0$ vanishing at $p$. 
  
By the Jacobson-Morozov Theorem (see \cite{Knapp}), the nilpotent element $Y_0$ belongs to some
subalgebra $\lieh$ isomorphic to ${\mathfrak sl}(2, \re)$, i.e. generated by an $sl_2$-triple $\{X_0, Y_0, Z_0\}$,  such that $[X_0,Y_0]=2Y_0$,$[X_0,Z_0]=-2Z_0$, and $[Y_0,Z_0]=X_0$.

 Let $H \subset G$ be  the group  associated
  to $\mathfrak{h}$.  From the lemma above and the fact that $G$ has finite center, $H$ is a finite covering of $PSL(2,\re)$.
Let us call $\Sigma$ the $H$-orbit of $p$. Since $Y_0$ vanishes at $p$, but not 
$X_0$ (by Lemma \ref{centerfact}), $\Sigma$ is a lightlike surface,  homothetic up to finite cover to $(\re^2 \setminus \{ 0\},d\theta^2)$. Also, $ Y_0 \in \mathfrak{g}_p$ implies that $H$ acts non-properly on $\Sigma$. The group  $\exp(\re X_0)$ is a Cartan subgroup of $H$, and by Lemma \ref{action de l'algebre de cartan}, $\exp(\re X_0)$ also acts non-properly. Thus, we can find $(q_k)$ a sequence of $\Sigma$ converging to   $p_0 \in \Sigma$, and a sequence of return times $(t_k)$, such that $h_k.q_k$ converges in $\Sigma$, where $h_k=\exp(t_kX_0)$.  Because in any finite-dimensional representation of ${\mathfrak sl}(2, \re)$, any $\re$-split element is mapped on some $\re$-split element, the Cartan subalgebra $\re X_0$ is contained in a Cartan subalgebra of the
 ambient algebra $\lieg$, say
 $\mathfrak{a}_0$. Now, we apply the first part of Proposition \ref{Kowalsky} to $X_0$ and $\liea_0$, and deduce 
  that  $W^s_{X_0}$ is isotropic at $p_0$ (where $W^s_{X_0}$ is  defined relatively  to $\mathfrak{a}_0$). In particular, $Y_0$ is isotropic at $p_0$, and Fact \ref{2-orbites} then ensures that $X_0$ is also isotropic at $p_0$.
$\Box$

\section{Proof of Theorem \ref{simple.transitive}}


\subsection{Reduction lemma} The following  fact reduces the proof of Theorem \ref{simple.transitive} to the case of non-proper {\it transitive}  actions of semi-simple  groups.




\begin{lemma} {\em (Reduction to  the transitive case)}  \label{reduction.transitive}
Let $G$ be a semi-simple Lie group with no factor locally isomorphic to $SL(2, \re)$, acting faithfully non-properly isometrically on a lightlike manifold. Then 
there exists a $G$-orbit which is    lightlike  and the $G$-action on 
it is non-proper, i.e. has  non-precompact stabilizers. In fact, the stabilizer subalgebras contain nilpotent elements.
 
 More precisely,    any $p \in M$, for which there exists  $X$ such that  $W^s_X$ is isotropic at $p$, has a lightlike orbit on which the action is non-proper.

\end{lemma}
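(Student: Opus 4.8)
The plan is to prove the ``more precise'' assertion, from which the main statement follows once one knows such a point exists. So let $p \in M$ and let $X$ lie in some Cartan subalgebra with $W^s_X$ isotropic at $p$; under our standing hypotheses (no factor locally isomorphic to $\mathfrak{sl}(2,\re)$) I may assume $W^s_X \neq 0$, and then $\dim W^s_X \geq 2$ automatically, by the argument recalled in the proof of Proposition \ref{Key}: if $\dim W^s_X = 1$ then $\liea \oplus \bigoplus_{\alpha(X)\geq 0}\lieg_\alpha$ would be a codimension-one subalgebra and $\lieg$ would act on a $1$-manifold. Such a pair $(p,X)$ exists because non-properness of $G$ together with Lemma \ref{action de l'algebre de cartan} yields a non-escaping sequence for a Cartan subgroup, and Proposition \ref{Kowalsky}(2) then produces it (Proposition \ref{Key} even gives a point where both $X_0$ and $W^s_{X_0}$ are isotropic). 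Throughout I use the two structural facts: the null line $N_p = \ker h_p$ is one-dimensional, and $\lambda_p(\mathfrak{s}_p) \subseteq N_p$, so $\lambda_p$ has rank at most $1$ on $\mathfrak{s}_p$ with $\ker(\lambda_p|_{\mathfrak{s}_p}) = \lieg_p$.

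\textbf{Non-properness.} Since $W^s_X \subseteq \mathfrak{s}_p$ and $\lambda_p(W^s_X) \subseteq N_p$ is at most one-dimensional, the intersection $W^s_X \cap \lieg_p$ has dimension at least $\dim W^s_X - 1 \geq 1$. I pick a nonzero $Y_0 \in W^s_X \cap \lieg_p$; as an element of the nilpotent subalgebra $W^s_X$, the endomorphism $ad_{Y_0}$ is nilpotent (and nonzero, $\lieg$ being semisimple). If $G_p$ were compact, $Ad(G_p)$ would preserve an inner product on $\lieg$, so every $ad_Y$ with $Y \in \lieg_p$ would be skew-symmetric; but a nonzero nilpotent endomorphism is not skew-symmetric. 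Hence $G_p$ is non-compact: choosing $g_k \in G_p$ leaving every compact set gives a return sequence for the constant sequence $p_k \equiv p$, so the $G$-action on the orbit $O = G\cdot p$ is non-proper, and $\lieg_p$ contains the nilpotent $Y_0$, as claimed.

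\textbf{Lightlikeness of the orbit.} It remains to exhibit a nonzero null vector in $T_pO = \lambda_p(\lieg)$, i.e. to show $N_p \subseteq T_pO$. By the Jacobson--Morozov theorem (\cite{Knapp}) the nilpotent $Y_0$ completes to an $\mathfrak{sl}(2,\re)$-triple $\{X_0,Y_0,Z_0\}$ with $[X_0,Y_0]=2Y_0$, $[X_0,Z_0]=-2Z_0$, $[Y_0,Z_0]=X_0$, the neutral element $X_0$ being $\re$-split. Applying Lemma \ref{nilpotantstabilizer}(1) to $Y_0 \in W^s_X \cap \lieg_p$ gives $X_0 = -[Z_0,Y_0] \in [\lieg, W^s_X \cap \lieg_p] \subseteq \mathfrak{s}_p$, so $X_0$ is isotropic at $p$. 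On the other hand, since $X_0$ is $\re$-split the line $\re X_0$ sits in some Cartan subalgebra $\liea_0$, and Lemma \ref{centerfact} gives $\liea_0 \cap \lieg_p = \{0\}$; as $X_0 \neq 0$ this forces $X_0 \notin \lieg_p$, hence $(X_0)_p \neq 0$. Thus $(X_0)_p$ is a nonzero isotropic vector tangent to $O$, so $N_p = \re\,(X_0)_p \subseteq T_pO$ and $h_p$ restricts to a degenerate form: $O$ is lightlike at $p$. Since $G$ acts transitively and isometrically on $O$, the orbit is lightlike at every point, and the non-properness above finishes the proof.

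\textbf{The main obstacle.} The dimension count already hands over a nilpotent element of the stabilizer, so non-properness is essentially automatic; the genuine difficulty is \emph{lightlikeness}, namely producing a null vector \emph{tangent to the orbit}. The device that resolves it is to feed the nilpotent $Y_0 \in W^s_X \cap \lieg_p$ into Jacobson--Morozov and use Lemma \ref{nilpotantstabilizer}(1) to transfer isotropy from $Y_0$ (which vanishes at $p$) to its neutral partner $X_0$, whose non-vanishing at $p$ is then secured by Lemma \ref{centerfact}. One could instead identify the $\mathfrak{sl}(2,\re)$-orbit of $p$ with the lightlike surface $(\re^2\setminus\{0\},\,d\theta^2)$ and invoke Fact \ref{2-orbites}, but the route through Lemma \ref{nilpotantstabilizer} is more direct and avoids Lemma \ref{dimension.orbite.lightlike}-type case analysis.
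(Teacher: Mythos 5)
Your proof is correct, and it draws on the same toolkit as the paper's own argument --- the dimension count $\dim W^s_X\geq 2$ producing a nonzero nilpotent $Y_0\in W^s_X\cap\lieg_p$ (hence a non-compact stabilizer), then Jacobson--Morozov and Lemma \ref{centerfact} --- but your route to lightlikeness is organized genuinely differently. The paper proceeds by elimination: the orbit $O$ is not a point (Lemma \ref{dimension.orbite.lightlike}), not $1$-dimensional (a factor of $G$ would then act faithfully on a $1$-manifold, forcing an $SL(2,\re)$ factor), and not Riemannian --- this last by contradiction, since a Riemannian orbit forces $W^s_X\subset\lieg_p$, and the skew-symmetric-plus-nilpotent mechanism applied to $ad(Y_0)$ on $\lieg/\lieg_p$ then puts $X_0=[Y_0,Z_0]$ inside $\lieg_p$, violating Lemma \ref{centerfact}. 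You run the same computation \emph{positively} and with no case analysis: Lemma \ref{nilpotantstabilizer}(1) places $X_0$ in $\mathfrak{s}_p$, Lemma \ref{centerfact} keeps it out of $\lieg_p$, so $(X_0)_p$ is a nonzero null vector tangent to the orbit. This is cleaner and proves slightly more (the null line $N_p$ is actually tangent to $O$). Two points you should make explicit, however. First, Lemma \ref{nilpotantstabilizer} is stated in the paper under the standing assumption that the orbit of $p$ is lightlike, whereas you invoke it before knowing the orbit type; this is legitimate, because its proof only uses that $Ad(G_p)$ preserves a positive definite form on $\lieg/\mathfrak{s}_p$, and this holds for an orbit of any type since $h_p$ descends to a $G_p$-invariant positive definite form on $T_pO/(T_pO\cap N_p)\cong\lieg/\mathfrak{s}_p$ (the paper sidesteps the issue by redoing the computation inline under the Riemannian hypothesis), so add that one-line justification rather than a bare citation. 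Second, your argument does not exclude that $O$ is $1$-dimensional and null; this is still ``lightlike'' under the paper's conventions (as for $Co^1$), so the statement as written is proved, but the exclusion is immediate (a $1$-dimensional orbit would again force an $SL(2,\re)$ factor) and is needed when the lemma is subsequently used to run the transitive-case analysis on $O$, so it is worth recording.
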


\begin{proof}  We have already seen  at the   end of the proof of Proposition \ref{Key}, that $W^s_X$  has dimension $>1$.
If it is isotropic at $p$, then it contains  elements  $Y \in W^s_X \cap \lieg_p$ vanishing at $p$. But $Y$ is a nilpotent element in $\lieg$, and in particular $Ad(\exp(tY))$ is non-compact, proving that the stabilizer of $p$ is non-compact.  Therefore the action of $G$ on the $G$-orbit $O$ of $p$   is non-proper.   

Let us show that $O$ is lightlike. Lemma \ref{dimension.orbite.lightlike} shows  that\ $O$ can not be reduced to $p$. Also, $O$ can not be 1-dimensional. Indeed, a factor of $G$ should then act faithfully on $O$, and we already saw that such a factor would be locally isomorphic to $SL(2,\re)$.

Suppose now by contradiction that $O$ is 
Riemannian. Then any vector which is isotropic at $p$ must vanish there, in particular $W^s_X \subset  \mathfrak{g}_{p}$.

Consider  $Y \in W^s_X$,  and its   infinitesimal action on the tangent space 
of the orbit at $p$. This action is just  $
ad(Y): \mathfrak{g}\mathbf{/}\mathfrak{g}_{p} 
\longrightarrow 
\mathfrak{g}\mathbf{/}\mathfrak{g}_{p}$. 
If $O$ is supposed to be Riemannian, it is at the same time skew symmetric and nilpotent,  hence trivial  
   (on $\mathfrak{g}
\mathbf{/}\mathfrak{g}_{p}$), which means:  $ad(Y)\left(
\mathfrak{g}\right) \subset \mathfrak{g}_{p}$, for all $Y \in W^s_X$. Now, let us pick $Y_0 \in W^s_X$, and use the Jacobson-Morozov theorem to get an ${\mathfrak sl}(2,\re)$-triple $(Z_0,X_0,Y_0)$. Then $ad(Y_0)(Z_0)=X_0$, so that $X_0 \in \lieg_p$. But we saw that $X_0$ is in a Cartan subalgebra of $\lieg$, yielding a contradiction with  Lemma \ref{centerfact}.

  \end{proof}

\subsection{Proof in the simple case}
 We now give  the proof of Theorem \ref{simple.transitive}, assuming that the group $G$ is  simple with finite center, and the action is transitive and non-proper. The general case of semi-simple groups will be handled in the next section. The proof will be achieved in several steps. 


\medskip
\textbf{Step 1}: There exist $p \in M$ and $X$ in some Cartan
subalgebra $\liea$, such that  $W^s_X \subset \lieg_p$.

\begin{proof}
Proposition \ref{Kowalsky} says  that for some  $p\in M
$, both  $X$ and $W^s_X$ are isotropic at $p$.
For any
 $Y $ in $ W^s_{X}$,  the Lie algebra generated by $X$ and $Y$ is isomorphic to the Lie algebra $\mathfrak{aff}(\re)$ and acts on the null leaf  ${\mathcal N}_p$. 
 Up to isomorphism,  there are exactly two actions 
 of $aff(\re)$ on a connected 1-manifold:
 
 \begin{enumerate}

\item The usual affine action of $aff(\re)$ on the line. Here, a conjugate of $X$ vanishes somewhere.  

\item The non-faithful action, for which $Y$ acts trivially.  

\end{enumerate}

We can not be in the first case without contradicting  Lemma \ref{centerfact}, so that only possibility (2) occurs, and thus  $W^s_X \subset \lieg_p$.
\end{proof}
\textbf{Step 2}: The $\re$-rank of $\mathfrak{g}$  equals  $1$.

\begin{proof}
Suppose  the $\re$-rank of $\mathfrak{g}$  $ >1$.   Let $\alpha$ be a root such that $\alpha (X) > 0$ and  $\beta$  an adjacent root in the Dynkin diagram, according to the choice of a basis $\Phi$ of positive simple roots where,  $\gamma \in \Phi \Longrightarrow \gamma(X) \ge 0$.  (See \cite{Knapp}).

By definition,  $\alpha + \beta$ is also a root and  $(\alpha +\beta) (X) > 0$, that is,  $\lieg_{-\alpha}$ and $\lieg_{- (\alpha+\beta)}$ are different 
and contained in $W^s_X$.


 Let $T_{\alpha}$ and $T_{\alpha + \beta}$ be the vectors 
 of $\liea$ dual to $\alpha$ and $\alpha +\beta $ respectively. They are linearly independent. 
 
  Moreover,  $T_\alpha \in [\lieg_{\alpha},  \lieg_{-\alpha}] \subset ad_{\mathfrak{g}} (W^s_{X})$, and similarly 
 for $T_{\alpha + \beta}$,
 

 By the first step and Lemma \ref{nilpotantstabilizer}, $T_\alpha$ and $T_{\alpha+ \beta}$
 are isotropic at $p$. Hence, there is a non-trivial linear combination of them which vanishes at $p$. 
   This  contradicts Proposition \ref{centerfact} claiming that 
   $\liea \cap \lieg_p = 0$. 
   Therefore, $\lieg$  has rank  1.
\end{proof}

\begin{remark}  It is exactly here that we need $G$ to be simple!

\end{remark}

\textbf{Step 3}: The Lie algebra $\mathfrak{g}$ is isomorphic to
$\mathfrak{o}(1,n)$

\begin{proof}
 Suppose that $\mathfrak{g}$ is not isomorphic to $\mathfrak{o}(1,n)$,
 then we have two roots $\alpha, 2\alpha$, such that $\alpha(X)>0$, 
 
 \begin{claim}
  The bracket $[ \lieg_{+2\alpha}, \lieg_{-\alpha}] \neq 0$.
\end{claim}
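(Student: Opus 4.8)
The plan is to exploit the fact that we are now in a rank-one situation where $\lieg$ is simple but not isomorphic to $\lieo(1,n)$. As the step's setup records, this means there are restricted roots $\alpha$ and $2\alpha$ with $\alpha(X)>0$; since $\lieg$ has rank one, $\pm\alpha$ and $\pm 2\alpha$ are then \emph{all} the restricted roots, and in particular $\lieg_{2\alpha}\neq \{0\}$, with $\lieg=\lieg_{-2\alpha}\oplus\lieg_{-\alpha}\oplus\lieg_0\oplus\lieg_\alpha\oplus\lieg_{2\alpha}$.

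The first ingredient I would recall is the standard structural fact that, whenever $2\alpha$ is a restricted root, the top root space is the bracket of the lower one, i.e. $[\lieg_\alpha,\lieg_\alpha]=\lieg_{2\alpha}$ (the nilpotent algebra $\lieg_\alpha\oplus\lieg_{2\alpha}$ is a generalized Heisenberg algebra whose commutator equals its center $\lieg_{2\alpha}$; see \cite{Knapp}). Applying this to $-\alpha$ in place of $\alpha$ (legitimate, since $-2\alpha=2(-\alpha)$ is again a root) gives the form I will actually use: $[\lieg_{-\alpha},\lieg_{-\alpha}]=\lieg_{-2\alpha}$. In other words, the brackets $[W,W']$ with $W,W'\in\lieg_{-\alpha}$ span all of $\lieg_{-2\alpha}$.

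Then I argue by contradiction. Suppose $[\lieg_{2\alpha},\lieg_{-\alpha}]=\{0\}$. For arbitrary $Z\in\lieg_{2\alpha}$ and $W,W'\in\lieg_{-\alpha}$, the $\ad$-invariance of the Killing form $B$ gives
$$ B([Z,W],W')=B(Z,[W,W']). $$
The left-hand side vanishes because $[Z,W]\in[\lieg_{2\alpha},\lieg_{-\alpha}]=\{0\}$, hence $B(Z,[W,W'])=0$. Since the vectors $[W,W']$ span $\lieg_{-2\alpha}$ by the previous paragraph, this shows that $B(Z,\cdot)$ vanishes identically on $\lieg_{-2\alpha}$, for every $Z\in\lieg_{2\alpha}$. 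But $B$ vanishes between $\lieg_\lambda$ and $\lieg_\mu$ unless $\lambda+\mu=0$ (a consequence of invariance, testing against $\ad H$ for $H\in\liea$), so the only pairing partner of $\lieg_{2\alpha}$ is $\lieg_{-2\alpha}$; nondegeneracy of $B$ on $\lieg$ then forces its restriction $B\colon\lieg_{2\alpha}\times\lieg_{-2\alpha}\to\RR$ to be a nondegenerate pairing. As $\lieg_{2\alpha}\neq\{0\}$, this contradicts $B(\lieg_{2\alpha},\lieg_{-2\alpha})=\{0\}$, and the claim follows.

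The only non-elementary input, and the step I would expect to be the main (minor) obstacle, is the structural lemma $[\lieg_\alpha,\lieg_\alpha]=\lieg_{2\alpha}$; everything else is the invariance and nondegeneracy of $B$ together with the orthogonality of distinct restricted root spaces. One could alternatively quote the general principle that $[\lieg_\gamma,\lieg_\delta]=\lieg_{\gamma+\delta}$ for restricted roots $\gamma,\delta$ with $\gamma+\delta$ a root, applied to $\gamma=2\alpha$, $\delta=-\alpha$, which yields $[\lieg_{2\alpha},\lieg_{-\alpha}]=\lieg_\alpha\neq\{0\}$ at once; but I prefer routing through $[\lieg_\alpha,\lieg_\alpha]=\lieg_{2\alpha}$ and the Killing form, since that equality for restricted root spaces with multiplicities is the cleanest well-documented case.
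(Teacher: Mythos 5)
Your proof is correct, but it takes a genuinely different route from the paper's. The paper also argues by contradiction, but geometrically: if $[\lieg_{+2\alpha},\lieg_{-\alpha}]=0$, then $\liel=\lieg_0+\lieg_{-\alpha}+\lieg_{-2\alpha}+\lieg_{+2\alpha}$ is closed under brackets, hence a subalgebra, and the corresponding group $L$ is a non-compact connected isometry group of a negatively curved symmetric space; Eberlein's dichotomy \cite{Ebe} (non-trivial solvable radical forces $L$ into a parabolic with compact Levi part, otherwise $L$ is semi-simple) shows $L$ is semi-simple, and then its root decomposition would be symmetric under $\gamma\mapsto-\gamma$, contradicting that $\liel$ contains $\lieg_{-\alpha}$ but not $\lieg_{\alpha}$. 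Your route is purely algebraic --- the identity $[\lieg_{-\alpha},\lieg_{-\alpha}]=\lieg_{-2\alpha}$ plus $\mathrm{ad}$-invariance of the Killing form and the non-degenerate pairing of $\lieg_{2\alpha}$ with $\lieg_{-2\alpha}$ --- and the Killing-form computation itself is impeccable. One caveat: your quoted structural lemma is true, but it is not in \cite{Knapp} (which gives only $[\lieg_\gamma,\lieg_\delta]\subseteq\lieg_{\gamma+\delta}$ and the orthogonality/pairing facts), and by the very duality you exploit it is essentially \emph{equivalent} to the claim (your "alternative route" $[\lieg_{2\alpha},\lieg_{-\alpha}]=\lieg_\alpha$ literally \emph{is} the claim). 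So you need a source proving it independently --- the classification of rank-one algebras $\mathfrak{su}(1,n)$, $\mathfrak{sp}(1,n)$, $\mathfrak{f}_4^{-20}$, or the H-type literature (Cowling--Dooley--Kor\'anyi--Ricci) --- which is exactly the "direct computation" the paper acknowledges and then deliberately avoids with its synthetic proof. Alternatively, you can make your argument self-contained: for $Z\neq 0$ in $\lieg_{2\alpha}$, the triple $(Z,\Theta Z,[Z,\Theta Z])$ spans a copy of $\mathfrak{sl}(2,\re)$, and $\lieg_{-\alpha}\oplus\lieg_{\alpha}$ is a module over it whose weights are only $\pm 1$ (since $\lieg_{\pm 3\alpha}=0$), hence a sum of two-dimensional irreducibles, so $\mathrm{ad}(Z)$ maps $\lieg_{-\alpha}$ isomorphically onto $\lieg_{\alpha}$; this yields the claim with no classification and no appeal to symmetric-space geometry.
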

 Let us continue the proof assuming the claim.  Consider a non-zero   $Y \in [ \lieg_{+2\alpha}, \lieg_{-\alpha}] \subset \mathfrak{g}_{+ \alpha} $. By Lemma  \ref{nilpotantstabilizer}, $Y$ is isotropic at $p$.
   Let $\Theta$ be the Cartan involution, then $\Theta Y \in W^s_X$, and hence belongs to $\lieg_p$, by Step 1.  Lemma \ref{nilpotantstabilizer} then implies that
    $[Y,\Theta Y] \in \mathfrak{g}_p$, in particular $\liea \cap \lieg_p \neq 0$, which  contradicts Lemma \ref{centerfact}. 
  \end{proof}

{\it Proof of the claim.} 
First, the rank 1  simple Lie groups  of non-compact type are known 
to be the isometry groups of symmetric spaces of negative curvature. They are the real, complex and quaternionic hyperbolic spaces, together with the hyperbolic Cayley plane.  A direct computation can be performed and yields the claim. Let us give another synthetic proof. By contradiction, the sum $ \liel= \lieg_0 + \lieg_{-\alpha} + \lieg_{-2\alpha}
+ \lieg_{+2 \alpha}$  would be  a subalgebra of $\lieg$. For the sake of simplicity, let us work with groups instead of algebras. Let $L$ 
be the group associated to our last subalgebra$\liel$.  Clearly, $L$ is 
not compact.  The point is that 
there is a dichotomy for non-compact  connected  isometry subgroups 
of  negatively curved symmetric spaces. If they 
have a  non-trivial  solvable radical, then they 
fix a point at infinity, and thus  are contained in  a parabolic group and in particular
have a compact simple (Levi)-part   (see \cite{Ebe}).  If not the group is semi-simple.
It is clear that our $L$ contains  a non-compact semi-simple, and therefore by the dichotomy, it  is semi-simple. But, once semi-simple, $L$
will have a ``symmetric '' root decomposition, i.e. the negative of a
root is a root, too.  Thus, there must exist 
a non-trivial root space corresponding to $\alpha$, which contradicts 
the true   definition of $\liel$.
$\Box$

\textbf{Step 4}: The full isotropic subalgebra is 
$\mathfrak{s}_p=\mathfrak{a} \oplus \mathfrak{m} \oplus
\mathfrak{g}_{-\alpha}$

\begin{proof} Recall that $\mathfrak{m}$ is the Lie algebra of the centralizer of $\liea$ in the maximal compact $K$.
Since $\mathfrak{m} \subset
[\mathfrak{g}_{\alpha}, \mathfrak{g}_{-\alpha}]$,   Lemma
\ref{nilpotantstabilizer} implies  that it is isotropic at $p$. 

On the other hand, if $Y \in \lieg_+\alpha$ is isotropic at $p$, Lemma \ref{nilpotantstabilizer} implies that  the semi-simple element $[Y,\Theta Y] \in [\lieg_{-\alpha},\lieg_\alpha]\subset [\mathfrak{s}_p,W^s_p \cap \lieg_p]$ is in the the stabilizer subalgebra  of $p$, which contradicts Lemma \ref{centerfact}.  Therefore, 
the isotropic subalgebra is exactly
$\mathfrak{s}_p=\mathfrak{a} \oplus \mathfrak{m} \oplus \mathfrak{g}_{\alpha}$.
\end{proof}

\textbf{Step 5}: The full stabilizer  subalgebra is 
$\lieg_p=\liem \oplus \mathfrak{g}_{\alpha}$.

\begin{proof}
Let $Z \in \mathfrak{m}$, then it is isotropic at $p$. Suppose by contradiction that   $Z \notin \lieg_p$. Then,  there exists an
element $Z + \lambda X \in \lieg_p$,  
$ \lambda \in \re^*$.  We let it act on the normal space
of the  null leaf.

The action of $X$ on $\mathfrak{g}/ \mathfrak{s}_p$  is identified 
to its action on $\lieg_{+\alpha}$, by  the previous step. 
In particular, the $X$-action  has non-zero  real eigenvalues.

The action of $\liem$ (and in particular $Z$) on $\mathfrak{g}/ \mathfrak{s}_p$ has purely imaginary eigenvalues, since $\liem$
is contained the Lie algebra of a maximal compact group. 

On one hand, since $X$  and $Z $ commute (by definition of $\liem$), the action of $Z+\lambda X$ on 
$\mathfrak{g}/ \mathfrak{s}_p$ must have eigenvalues with non-trivial 
real part. 

On the other hand,  $Z + \lambda X \in \lieg_p$  and acts as a skew
symmetric endomorphism  on $\mathfrak{g}/ \mathfrak{s}_p$, and thus has only  purely imaginary eigenvalues: contradiction. This shows that 
$\liem \subset \lieg_p$, but since $\liea \cap \lieg_p = 0$, and 
$\lieg_p \subset \lies_p$, which was calculated in the previous space, we get the equality $\lieg_p = \liem \oplus \lieg_-\alpha$.
\end{proof}

{\bf End:} Since $\lieg$ is isomorphic to $\mathfrak{o}(1,n)$, and the Lie algebra of the stabilizer $\lieg_p$ is isomorphic to the Lie algebra of the group of Euclidian motions $Euc_{n}$, we conclude that $M$ is a covering of the Lightcone in Minkowski space, which completes  the proof of Theorem \ref{simple.transitive} when $G$ is simple.

\subsection{End of the proof}
\label{the-end}
Thanks to Lemma \ref{reduction.transitive}, the complete proof of Theorem{simple.transitive} reduces to the  study of non-proper transitive actions of semi-simple groups with no factor locally isomorphic to $SL(2,\re)$. The work made above will be useful thanks to the following reduction lemma:
 
\begin{lemma} {\em (Reduction to the simple case)} \label{reduction.simple}
 Let $X$ be in a Cartan subalgebra of $\lieg$, such that $W^s_X$ is isotropic at $p$.  Consider the decomposition of $\lieg$ in simple factors. Let $\lieh$ be such a simple factor,  and $H \subset G$ the corresponding  group. 
Suppose $X$ has a non-trivial projection  on $\lieh$. Then the $H$-orbit is non-proper and lightlike.
\end{lemma}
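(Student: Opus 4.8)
The plan is to reduce the assertion to the single-simple-group case that has just been treated, by applying the ``more precisely'' conclusion of Lemma \ref{reduction.transitive} to the factor $H$ itself. First I would choose the Cartan involution $\Theta$ compatibly with the factorization $\lieg = \lieh_1 \oplus \cdots \oplus \lieh_r$, so that $\liek = \oplus_i (\liek \cap \lieh_i)$, $\liep = \oplus_i (\liep \cap \lieh_i)$, and correspondingly $\liea = \oplus_i \liea_i$ with $\liea_i := \liea \cap \lieh_i$ a maximal abelian subalgebra of $\liep \cap \lieh_i$, i.e. a Cartan subalgebra of the simple factor $\lieh_i$. Writing $\lieh = \lieh_{i_0}$ for the distinguished factor and $X = \sum_i X_i$ with $X_i \in \liea_i$, the hypothesis of the lemma is exactly that $X_{i_0} \neq 0$.

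The key structural observation is that, since distinct factors commute, every root of $\lieg$ relative to $\liea$ is supported on a single $\liea_i$; hence the roots of $\lieh_i$ relative to $\liea_i$ are precisely those $\lieg$-roots whose root space lies in $\lieh_i$, and for such a root $\alpha$ one has $\alpha(X) = \alpha(X_i)$. Consequently the stable algebra splits as $W^s_X = \oplus_i W^s_{X_i}$, where $W^s_{X_i}$ is the stable algebra of $X_i$ computed intrinsically inside $\lieh_i$; in particular $W^s_{X_{i_0}} \subset W^s_X$. Since isotropy at $p$ is a property of the $M$-geometry (the value $Y_p = \lambda_p(Y) \in T_pM$ is the same whether $Y$ is regarded as an element of $\lieh$ or of $\lieg$), the hypothesis that $W^s_X$ is isotropic at $p$ immediately gives that $W^s_{X_{i_0}}$ is isotropic at $p$.

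Finally I would verify that $H$ satisfies the hypotheses under which the simple-case machinery runs, and then invoke it. The factor $\lieh$ is simple and not locally isomorphic to $\mathfrak{sl}(2,\re)$ (as $\lieg$ has no such factor); it is non-compact, because $X_{i_0} \neq 0$ forces $\liea_{i_0} \neq 0$, hence $\liep \cap \lieh \neq 0$; the $H$-action is still locally faithful, its kernel being contained in the discrete kernel of the $G$-action; and $H$ has finite center, since $\ker(\,Ad_G|_H\,) = H \cap Z(G)$ is finite and $Z(H)$ injects into it. In particular Lemma \ref{centerfact} and Lemma \ref{dimension.orbite.lightlike} apply to $H$, and $\dim W^s_{X_{i_0}} > 1$ for the same reason as at the end of Proposition \ref{Key} (a one-dimensional stable algebra would make $\lieh$ act on a $1$-manifold, forcing $\lieh = \mathfrak{sl}(2,\re)$). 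Since the proof of the refined statement of Lemma \ref{reduction.transitive} uses only the isotropy of the stable algebra together with these structural facts --- and not any global non-properness of the acting group --- it applies verbatim with $H$ in the role of the acting group, yielding that the $H$-orbit of $p$ is lightlike and that $H$ acts non-properly on it. The one point demanding genuine care --- the main, if modest, obstacle --- is the compatibility of the root-space decomposition with the factorization, namely that $W^s_{X_{i_0}}$ computed inside $\lieh$ is honestly a sub-sum of $W^s_X$; the remaining work is the routine verification that non-compactness, finite center, local faithfulness and the ``no $\mathfrak{sl}(2,\re)$ factor'' condition all descend to $H$.
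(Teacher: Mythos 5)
Your proposal is correct and follows essentially the same route as the paper: both decompose $W^s_X$ as the direct sum of the stable algebras $W^s_{X_i}$ computed inside the simple factors, deduce that $W^s_{X'}$ (for the nontrivial projection $X'$ on $\lieh$) is nonzero and isotropic at $p$, and then invoke the refined conclusion of Lemma \ref{reduction.transitive} with $H$ as the acting group. Your extra verifications (that the Cartan data splits along the factors, and that local faithfulness, finite center, non-compactness and the absence of an $\mathfrak{sl}(2,\re)$ factor all descend to $H$, so that the cited lemma really applies without any global non-properness hypothesis) are details the paper leaves implicit, not a different argument.
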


\begin{proof} Write $\lieg=\lieh_1 \oplus... \oplus \lieh_s$, where the $\lieh_i$'s are the simple factors of $\lieg$, and call $X_i$ the projection of $X$ on $\lieh_i$. If $W^s_{X_i,\lieh_i}$ denotes the stable space of  $X_i$ relatively to $\lieh_i$, it is straightforward to check that $X^s_{X}=W^s_{X_1,\lieh_1} \oplus ... \oplus W^s_{X_s,\lieh_s} $.  In particular, if $W^s_X$ is isotropic at $p$ and $\lieh$ is a simple factor on which $X$ has a non-trivial projection $X^{\prime}$, then $W^s_{X^{\prime},\lieh}$ is non-trivial and isotropic at $p$.   We infer from Corollary \ref{reduction.transitive} that
the $H$-orbit of $p$ is lightlike and  the action of $H$ on it is non-proper. 
\end{proof}


By this lemma, there is a simple factor $H$ of $G$ having a lightlike 
non-proper orbit $H.p$.  It follows from the previous section that $H$ is locally isomorphic to  $O(1, n)$, $n \geq 3$, and  $H.p$ is homothetic to  $Co^n$. There is a semi-simple group $H^\prime$ such that $G$ is a finite quotient of $H \times H^{\prime}$. This product still acts locally faithfully on $M$, so that we will assume $G= H \times H^{\prime}$ in the following. 
Consider $O = G.p$  the $G$-orbit containing this $H.p$. The remaining   part of Theorem \ref{simple.transitive} is the geometric description of $O$: it is  
a direct metric product $Hp \times H^\prime p$ (up to a finite cover). This is the containt of the following lemma, which will be also useful when dealing with groups having factors locally isomorphic to $SL(2,\Bbb R)$.

\begin{proposition}
\label{produit}
Let $G$ be a semi-simple Lie group acting locally faithfully transitively and non properly on a lightlike manifold $(M,h)$. We assume that $G=H \times H^{\prime}$, where $H$ is isomorphic to $O(1,d)$, $d \geq 2$, and $H^{\prime}$ is semi-simple. We assume that for some $p \in M$, the orbit $H.p$ is homothetic to $Co^d$ (resp. $Co^2$ or $Co^1$) if $d \geq 3$ (resp. $d=2$). Then $M$ is a metric product $M=Co^m \times N$, where $N$ is an $H^{\prime}$-homogeneous Riemannian manifold.

\end{proposition}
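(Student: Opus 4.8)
The plan is to promote a metric splitting from the single tangent space $T_pM$ to all of $M$, using throughout the commutation $[\lieh,\lieh^\prime]=0$ of the two factors. For the factor $\lieh\cong\lieo(1,d)$ I keep the root-space notation $\liea,\liem,\lieg_{\pm\alpha}$ of the simple case, so that by Steps 1--5 the stabilizer is $\lieh_p:=\lieh\cap\lieg_p=\liem\oplus\lieg_\alpha$ (the Lie algebra of $Euc_{d-1}$), the isotropic part of $\lieh$ is $\lies_p\cap\lieh=\liea\oplus\liem\oplus\lieg_\alpha$, the Euclidean quotient $\lieh/(\lies_p\cap\lieh)$ is carried by $\lieg_{-\alpha}\cong\RR^{d-1}$, and the null direction at $p$ is $\lambda_p(\liea)$, generated by the $\RR$-split element $X\in\liea$. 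Writing $L_H=H\cap G_p$, $L_{H^\prime}=H^\prime\cap G_p$, I reduce the statement to three assertions: (I) the orbit $H^\prime p$ is Riemannian, i.e.\ $\lies_p\cap\lieh^\prime=\lieh^\prime_p$; (II) the Euclidean parts of $Hp$ and $H^\prime p$ are $h_p$-orthogonal; (III) these infinitesimal facts force $M=Co^d\times N$ as a metric product. Assertions (I) and (II) are the content; (III) follows by homogeneity.

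For (I), suppose some $Y\in\lieh^\prime$ is isotropic at $p$ with $Y_p\neq0$. Since the isotropic vectors of a lightlike metric form exactly the null line, $Y_p$ spans $\lambda_p(\RR X)$, so after rescaling $Z:=Y-X\in\lieg_p$. The key is to compute $\mathrm{ad}(Z)$ on the Euclidean quotient $\lieg/\lies_p$, on which $\lieg_p$---hence $Z$---acts by a skew-symmetric, so purely imaginary, endomorphism. Because $[\lieh,\lieh^\prime]=0$, on the root space $\lieg_{-\alpha}\subset\lieh$ one has $\mathrm{ad}(Y)=0$ and $\mathrm{ad}(X)=-\alpha(X)\,\mathrm{Id}$, whence $\mathrm{ad}(Z)=\alpha(X)\,\mathrm{Id}$ with $\alpha(X)\neq0$. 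As $\lieg_{-\alpha}\cap\lies_p=0$, this space injects into $\lieg/\lies_p$ as an $\mathrm{ad}(Z)$-invariant subspace carrying the real eigenvalue $\alpha(X)$, contradicting skew-symmetry. Hence no such $Y$ exists and $H^\prime p$ is spacelike. This eigenvalue argument, exploiting that the null generator $X$ is $\RR$-split yet centralizes $\lieh^\prime$, is the crux of the whole proof.

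For (II), with $d\geq3$, I read off the action of $\lieh_p\cong\mathrm{Lie}(Euc_{d-1})$ on $E:=\lieg/\lies_p$. The image of $\lieh$ is $V\cong\lieg_{-\alpha}\cong\RR^{d-1}$, on which $\liem\cong\lieo(d-1)$ acts by its standard representation and $\lieg_\alpha$ acts trivially modulo $\lies_p$; the image of $\lieh^\prime$ is $W=\lieh^\prime/\lieh^\prime_p=T_pH^\prime p$ by (I), on which $\lieh_p$ acts trivially since $[\lieh_p,\lieh^\prime]=0$. Thus $V$ lies in the nontrivial $\liem$-isotypic part of $E$ and $W$ in the trivial part; because $\lieg_p$ preserves the positive-definite form on $E$, these isotypic components are orthogonal, so $E=V\oplus W$ with $V\perp W$. (For $d=2$ one has $\liem=0$ and this Schur-type argument collapses; the orbits $Co^1,Co^2$ must then be handled directly, which is precisely the source of the values $m=1,2$ there.) Combined with (I), this says $h_p$ is block-diagonal: the cone metric on $T_pHp$, a positive-definite form on $T_pH^\prime p$, and the two orbits meeting transversally with orthogonal Euclidean parts.

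Finally, for (III), transversality $T_pHp\cap T_pH^\prime p=0$ forces the stabilizer to split: writing $Z\in\lieg_p$ as $Z_H+Z_{H^\prime}$, the vector $\lambda_p(Z_H)=-\lambda_p(Z_{H^\prime})$ lies in $T_pHp\cap T_pH^\prime p=0$, so $Z_H\in\lieh_p$ and $Z_{H^\prime}\in\lieh^\prime_p$, giving $\lieg_p=\lieh_p\oplus\lieh^\prime_p$. Hence $M=(H\times H^\prime)/(L_H\times L_{H^\prime})=Co^d\times N$ with $N=H^\prime p$ Riemannian by (I). Since both orbit foliations are $G$-invariant and $G$ acts transitively by isometries, the orthogonal block-diagonal form of $h_p$ is transported to every point, so $h=h_{Co^d}\oplus g_N$ globally. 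The only delicate technical point in (III), handled as usual up to a finite cover, is to pass from this Lie-algebra splitting to an honest splitting of the possibly disconnected stabilizer group.
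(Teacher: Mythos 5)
Your proof of the case $d\geq 3$ is correct, and its key step is genuinely different from the paper's. For assertion (I) you rule out a nonzero isotropic $Y_p$, $Y\in\lieh^\prime$, by the purely infinitesimal computation $\mathrm{ad}(Y-X)=\alpha(X)\,\mathrm{Id}$ on $\lieg_{-\alpha}$ modulo $\lies_p$, contradicting skew-symmetry of $\lieg_p$ on $\lieg/\lies_p$; the paper instead argues globally: if $H^\prime.p$ were degenerate, some $h^\prime\in H^\prime$ would move $p$ along ${\mathcal N}_p$ inside $H.p$, producing a non-trivial isometry of the cone $H.p$ commuting with the $H$-action, which is excluded by Lemma \ref{commute} (fixed points of parabolic elements). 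Your argument is more algebraic and self-contained; the paper's is insensitive to the root-space picture of the orbit, and that is exactly what makes it uniform in $d$. Your assertion (II) is in the same spirit as the paper's step 2 (a Schur-type orthogonality of isotypic components under the isotropy of $p$ in $H$), and your (III) is, if anything, more explicit than the paper, whose proof stops at orthogonality plus homogeneity.

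The genuine gap is the case $d=2$, which belongs to the statement and which you explicitly leave unproved (``the orbits $Co^1,Co^2$ must then be handled directly''). This is not a removable corner case: the paper invokes Proposition \ref{produit} in the proof of Corollary \ref{simple.homogeneous} precisely when a factor is a finite cover of $PSL(2,\re)$ and the orbit is $Co^1$ or $Co^2$. The two subcases fail at complementary places in your scheme. For a $Co^1$ orbit, step (I) cannot even be started: there the stabilizer algebra is $\lieh_p=\liea\oplus\lieg_\alpha$, so $X_p=0$, the null direction is $\lambda_p(\lieg_{-\alpha})$ rather than $\lambda_p(\liea)$, and in fact all of $\lieh$ lies in $\lies_p$; hence $\lieg_{-\alpha}$ has zero image in $\lieg/\lies_p$ and no real-eigenvalue contradiction is available (orthogonality, by contrast, is then automatic since $T_p(H.p)$ is the kernel of $h_p$). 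For a $Co^2$ orbit, your (I) does go through, but (II) collapses exactly as you note: $\liem=0$, and $\mathrm{ad}(\lieg_\alpha)$ sends $\lieg_{-\alpha}$ into $\liea\subset\lies_p$, so the stabilizer acts trivially on $\lieg/\lies_p$ and Schur gives nothing. To complete the proof you would need a substitute for these cases — e.g.\ the paper's global commutation Lemma \ref{commute} to settle (I) for $Co^1$, plus an actual orthogonality argument for $Co^2$ — rather than the promissory note currently standing in for them.
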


\begin{proof}
$M$ is naturally foliated by lightcones   ${\mathcal H}_x = H.x$.
This foliation is $G$-invariant: if $g \in G$, then, $g {\mathcal H}_x=  g H.x= Hg.x = {\mathcal H}_{gx}$, since $H$ is normal in $G$.


1) We first prove that $H^\prime.p$ is Riemannian. If it is not the case, 
it contains the null leaf ${\mathcal N}_p$. If $p^{\prime} \not = p$ is a point of ${\mathcal N}_p$, then   there exists
$h^\prime \in H^\prime$ such that $h^{\prime}.p=p^{\prime}$. Also, $h^{\prime}$ maps the null line of $H.p$ passing through $p$ onto the null line of $H.p^{\prime}$ passing through $p^{\prime}$, which means that $h^{\prime}$ preserves ${\mathcal N}_p$ and acts 
non-trivially on it. But since $p^{\prime}$ is a point of ${\mathcal N}_p \subset H.p$, $h^{\prime}$ maps $H.p$ on itself. Thus, $h^{\prime}$ is an isometry of the cone $H.p$, commuting with the action of $H$.

\begin{lemma}
\label{commute}
Let $h^{\prime}$ be an isometry of the cone $Co^d$, $d \geq 2$ (resp. of $Co^1$), commuting with the action of $O(1,d)$ (resp. $O(1,2)$). Then $h^{\prime}$ is the identity map of $Co^d$ (resp. $Co^1$).
\end{lemma}

\begin{proof}
We begin with the case of $Co^1$. An isometry of $Co^1$ is just a diffeomorphism of ${\bf S}^1$. If such a diffeomorphism commutes with the projective action of $O(1,2)$ on ${\bf S}^1$, it must fix, in particular, all the fixed points of parabolic elements in $O(1,2)$. But the set of these fixed points is precisely ${\bf S}^1$, so that we are done.

Now, in higher dimension, we saw that writing $Co^d$ as ${\Bbb R} \times {\bf S}^{d-1}$ with the metric $0 \oplus g_{{\bf S}^{d-1}}$, the isometry $h^{\prime}$ is of the form: $(t,x) \mapsto (t-\mu(x),\phi(x))$. Here $\phi$ is a conformal transformation of ${\bf S}^{d-1}$ satisfying $\phi^*g_{{\bf S}^{d-1}}=e^{2 \mu}g_{{\bf S}^{d-1}}$. Now, if $h^{\prime}$ commutes with the action of $O(1,d)$, it must leave invariant any line of fixed points of parabolic elements in $O(1,d)$. This yields $\phi(x)=x$, and finally $\mu(x)=0$. 

\end{proof}

The lemma implies that $h^{\prime}$ acts identically on $H.p$, a contradiction with the fact that it acts non-trivially on ${\mathcal N}_p$.



 2)  Let $S$ be the isotropy group of $p$ in 
$H$. Since $H$ and $H^{\prime}$ commute, $S$ acts trivially on $H^{\prime}. p$:  $s(h^{\prime}. p)= h^{\prime} s .p = h^{\prime}.p$. In particular, $S$ will act trivially on $T_p(H^{\prime}.p) \cap T_p(H.p)$.
 But $T_p({\mathcal N}_p)$ is the only subspace of $T_p(H_p)$ on which the action is trivial, and since we have already seen that $T_p(H^{\prime}.p)$ must be transverse to $T_p({\mathcal N}_p)$, we get $T_p(H^{\prime}.p) \cap T_p(H.p) = \{0 \}$.
 Now, there is a $S$-invariant splitting 
$T_p M=T_p(H^{\prime}.p) \oplus T_p({\mathcal N}_p) \oplus E$,
 where $E$ is a Riemannian subspace of $T_p(H.p)$, on which $S$ acts irreducibly by the standard action of $O(n-1)$ on $\re^{n-1}$.
 Let us call $F$ the orthogonal of $T_p(H.p)$ in $T_pM$. This space is transverse to $E$, so that $F$ is the graph of a linear map $A:T_p(H^{\prime}.p) \oplus T_p({\mathcal N}_p) \to E$.
 This map $A$ intertwines the trivial action of $S$ on $T_p(H^{\prime}.p) \oplus T_p({\mathcal N}_p)$ with the irreducible one on $E$, so that $A=0$, and $F=T_p(H^{\prime}.p) \oplus T_p({\mathcal N}_p)$. 
As a consequence, the sum $T_p(H^{\prime}.p) \oplus T_p(H.p)$ is orthogonal for the metric $h_p$, and by homogeneity of $M$, this remains true at every point of $M$.    
\end{proof}
\subsection{Proof of corollary \ref{simple.homogeneous}}

Here, we assume that $G$ is semi-simple, noncompact, with finite center.  The group $G$ acts transitively and non-properly on a lightlike manifold $(M,h)$.  Looking at a finite cover of $G$ if necessary, we assume that $G=H_1 \times ... \times H_s$, where each $H_i$ is a simple group with finite center. For $p \in M$, and every $i=1,...,s$, we call $G_p^i$ the projection of the isotropy group $G_p$ on $H_i$, and $H_p^i$ the intersection $G_p \cap H_i$.  Each $H_p^i$ is a normal subgroup of $G_p$. Since $G_p$ is non-compact, some $G_p^i$ has non-compact closure; for example $i=1$. 
Performing a Cartan decomposition of a sequence $(g_k)$ in $G_p$ tending to infinity, and using Proposition \ref{Kowalsky}, we get $X_{1}$ in a Cartan subalgebra of $\lieh_{1}$, and some $p^{\prime} \in M$ such that $W_{X_{1}}$ is isotropic at $p^{\prime}$. If $H_{1}$ is not locally isomorphic to $SL(2,\re)$, we get that $W_{X_{1}}$ has dimension $>1$, and thus $H_{1}.p^{\prime}$ is lightlike and carries a non-proper action of $H_{1}$.  By the previous study, $H_{1}$ is isomorphic to $O(1,n)$, $n \geq 3$, and $H_{1}.p^{\prime}$ is homothetic to $Co^n$.  We can then apply Proposition \ref{produit} to conclude.

We are left with the case where $H_1$ is a finite cover of  $PSL(2,\re)$, and $G_p^1$ does not have compact closure. We claim that  the orbit $H_1.p$ can not have dimension $3$. Indeed, let $< \ >_p$ the pullback of $h_p$ in the Lie algebra $\lieh_1$.  Let $g \in G_p$, and $g_j$ the projection of $g$ on $H_j$. Since $D_pg$ leaves $h_p$ invariant, and the $H_j$'s commute, we get that $< \ >_p$ is $Ad(g_1)$-invariant. But $< \ >_p$ is either Riemannian, or lightlike. In both cases, we saw ( see the proof of Proposition \ref{PSL(2,R)surfaces}) that the subgroup $S \subset H_1$ such that $Ad(S)$ preserves $< \ >_p$  is compact, contradicting the fact that $G_p^1$ does not have compact closure.  

Now, if $H_1.p$ is of dimension $2$ and Riemannian,  $H_p^1$ is a maximal compact subgroup $K \subset H_1$. Now, since $H_p^1$ is normal in $G_p$, we get that $K$ is normal in $G_p^1$, what yields $G_p^1=K$, and a new contradiction. 

We conclude that $H_1.p$ is either $1$-dimensional and lightlike, or $2$-dimensional and lightlike. It follows from Proposition \ref{PSL(2,R)surfaces} that $H_1.p$ is homothetic to a cone $Co^1$ or $Co^2$. We then get the conclusion thanks to Proposition \ref{produit}.   
$\Box$

\section{Proof of theorem \ref{compact}}

Here, we assume that $G$ is simple with finite center, and acts locally faithfully by isometries of a compact lightlike manifold $(M,h)$. 

We first assume, by contradiction, that $G$ is not locally isomorphic to $PSL(2,\re)$.  By compactness , every  sequence of $M$ is non-escaping. It follows from Proposition \ref{Kowalsky} that for every $X$ in a Cartan subalgebra of $\lieg$, $W_X^s$ is isotropic at every $p \in M$. Thus, using the last point of Lemma \ref{reduction.transitive}, and the conclusions of Corollary \ref{simple.homogeneous}, we get that $G$ is locally isomorphic to $O(1,n)$, and any $G$-orbit is homothetic to $Co^n$, $n \geq 3$. Let us call $K$ a maximal compact subgroup of $G$, and let $K_0$ the stabilizer in $K$ of a given point $p_0 \in M$. As we saw it in the proof of Lemma \ref{dimension.orbite.lightlike}, the compact group $K_0$ preserves a Riemannian metric on $M$. Since any Riemannian isometry can be linearized around any fixed point thanks to the exponential map, it is not difficult to prove that the set of fixed points of $K_0$ is a closed  submanifold of  $M$, that we call $M_0$. We 
know explicitely the action of $K$ on $Co^n$, and  observe that every orbit of $K$ is of Riemannian type. Let $S(\liek/\liek_0)$ denote the set of euclidean scalar products on $\liek/\liek_0$. There is a continuous map $\mu: M_0 \to S(\liek/\liek_0)$ defined in the following way: if $ X$ and $Y$ are two vectors of $\liek$, and $\overline X$ and $\overline Y$ are their projections on $\liek/\liek_0$, then $\mu(p)(\overline X,\overline Y)=h_p(X(p),Y(p))$.  Now, on $G.p_0$, there is a  1-parameter flow of homotheties $h^t$, which transforms $h_{|G.p_0}$ into $e^{2t}h_{|G.p_0}$, and commutes with the action of $K$ (in particular, it leaves $M_0 \cap G.p_0$ invariant). From this, it follows that $\mu(h^t.p_0)=e^{2t}\mu(p_0)$. Now, by compactness of $M_0$, there is a sequence $(t_k)$ tending to $+ \infty$ such that $h^{t_k}.p_0$ tends to $p_{\infty} \in M_0$. We should get by continuity of $\mu$: $\lim_{k \to + \infty} e^{2t_k}\mu(p_0)=\mu(p_{\infty})$, which yields the desired 
 contradiction.

It remains to understand what happens if $G$ has finite center, and is locally isomorphic to $PSL(2,\re)$.  Let us fix $X,Y,Z$ a standard basis of $\lieg$: $[Y,Z]=X$,$[X,Y]=2Y$, and $[X,Z]=-2Z$. It follows from Proposition \ref{Kowalsky} that $Y$ and $Z$ are isotropic at every $p \in M$. As a consequence, at any $p \in M$, a non-trivial linear combination of $Y$ and $Z$ has to vanish, so that all the orbits of $G$ are lightlike and have dimension at most 2.  If there is a 2-dimensional orbit $G.p_0$, Proposition \ref{PSL(2,R)surfaces} ensures that it is homothetic to $\re^2 \setminus \{  0\}$ endowed with the metric $d \theta^2$ (namely $Co^2$). We get a contradiction exactly as above, using the action of a maximal compact group and the homothetic flow on $Co^2$  (here $\liek_0=0$).

We conclude that every $G$ orbit is 1-dimensional and lightlike. Since $G$ has finite center, these orbits are finite coverings of the circle, hence closed.  $\Box$

\section{Proof of Theorem \ref{Sub-Lorentz.Theorem}}  
Let us first summarize results on Lorentz dynamics in the following statement, fully proved in \cite{DMZ}, but   early partially proved for instance in \cite{Adams1, Adams2, ADZ, kowalsky}. 

\begin{theorem}   \label{Lorentz.case}

Let $G$ be a semi-simple group with finite center, no compact factor and 
no local factor isomorphic to $SL(2, \re)$,  acting isometrically non-properly on a Lorentz manifold $M$. 
Then, up to a finite cover, $G$ has a factor $G^\prime$ isomorphic to $O(1, n)$ or $O(2, n)$
and having some  orbit  homothetic to  $dS_n$ or  $ AdS_n$.
\end{theorem}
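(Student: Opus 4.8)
The plan is to run the Lorentz analogue of the development carried out for lightlike metrics in Sections \ref{preliminaries}--\ref{the-end}, the point being that the engine driving everything---Kowalsky's non-properness mechanism (Proposition \ref{Kowalsky})---is insensitive to the signature. The computation proving Proposition \ref{Kowalsky} uses only that $\phi_X^t$ is an isometry of $h$ and that the return points $q_k$ remain in a compact set; none of this sees whether $h$ is lightlike or Lorentzian. Thus non-properness again produces $X$ in a Cartan subalgebra $\liea$ and $p \in M$ such that every $Y \in W^s_X$ is null at $p$, and the same polarization $h_{p_k}(Y_{p_k}, Y'_{p_k}) = e^{t_k(\alpha(X)+\alpha'(X))} h_{q_k}(Y_{q_k}, Y'_{q_k}) \to 0$ shows that the image of $W^s_X$ in $T_pM$ is a totally isotropic subspace. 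The only structural input I would change is the bound on its dimension: a Lorentz scalar product (signature $-+\cdots+$) still has totally isotropic subspaces of dimension at most $1$, so exactly as before the evaluation $W^s_X \to T_pM$ has rank $\le 1$.

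With that bound in hand the reduction machinery transfers almost verbatim. First I would decompose $\lieg$ into simple factors and project $X$, as in Lemma \ref{reduction.simple}, to isolate a simple factor $H$ on which the relevant stable space is nontrivial and totally isotropic at $p$; the $SL(2,\re)$-free hypothesis together with the ``$\lieg$ cannot act faithfully on a $1$-manifold'' argument (as at the end of the proof of Proposition \ref{Key}) forces $\dim W^s > 1$, so there is a nonzero nilpotent $Y_0 \in W^s \cap \lieg_p$ vanishing at $p$. Jacobson--Morozov embeds $Y_0$ in an $\mathfrak{sl}(2,\re)$-triple $\{X_0, Y_0, Z_0\}$ with $X_0$ lying in a Cartan subalgebra of $\lieg$, and the analogue of Lemma \ref{centerfact} keeps $X_0$ off the stabilizer. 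The genuine novelty enters here: the orbit $\Sigma = H.p$ is now a surface whose induced metric may be either degenerate or Lorentzian, so in place of the purely lightlike classification of Proposition \ref{PSL(2,R)surfaces} I would invoke the classification of $SL(2,\re)$-homogeneous surfaces carrying an invariant sub-Lorentz metric, recording which invariant metric each homogeneous model supports.

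Having localized a simple factor $G'$ with a distinguished element $X \in \liea$ satisfying $W^s_X \subset \lieg_p$ (the Step 1 dichotomy between the two actions of $\mathfrak{aff}(\re)$ on a $1$-manifold is unchanged), I would rerun the root-theoretic Steps 2--5. The single place where signature entered was the dimension bound on the totally isotropic image and, through it, the behaviour of $\lieg_p$ on the normal quotient $\lieg/\mathfrak{s}_p$. In the lightlike case $\lieg_p$ acts skew-symmetrically on the \emph{Euclidean} quotient, hence with purely imaginary eigenvalues, and the two dual vectors $T_\alpha, T_{\alpha+\beta}$ forced into the $1$-dimensional isotropic cone produced a relation inside $\liea \cap \lieg_p$, contradicting Lemma \ref{centerfact} and capping the rank at $1$. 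In the Lorentz case the screen is \emph{Lorentzian}, so skew-symmetric elements may carry a pair of real boost eigenvalues: one extra independent direction can be accommodated, and the same bookkeeping caps the real rank at $2$ rather than $1$. The rank-$1$ branch yields $\lieg' \cong \mathfrak{o}(1,n)$ and the rank-$2$ branch $\lieg' \cong \mathfrak{o}(2,n)$.

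Finally I would identify the model orbit by computing $\mathfrak{s}_p$ and $\lieg_p$ as in Steps 4--5, now taking skew-symmetry relative to the Lorentzian $h_p$, i.e. inside $\mathfrak{o}(1,n-1)$. This pins $G'_p$ down as a conjugate of $O(1,n-1)$, whence $G'/G'_p$ is, up to homothety and finite cover, the symmetric Lorentz space of constant curvature: $dS_n = O(1,n)/O(1,n-1)$ in the rank-$1$ case and $AdS_n = O(2,n-1)/O(1,n-1)$ in the rank-$2$ case, and the splitting off of the remaining factors is the Lorentz analogue of Proposition \ref{produit}. I expect the main obstacle to be precisely the loss of the pointwise rigidity that underpinned the lightlike steps: for a lightlike metric the isotropic direction is unique and coincides with the nontrivial eigenspace of any infinitesimal isometry fixing $p$, so the isotropy acts with imaginary eigenvalues and skew-symmetry alone kills them, whereas for a Lorentz metric the orbit carries an extra timelike direction, the isotropy sits in the non-compact $O(1,n-1)$, and the clean eigenvalue arguments must be redone by hand. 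Hand in hand with this is the need to separate, at each stage, the branch where the orbit is degenerate---which by the lightcone theory (Theorem \ref{simple.transitive} and \cite{DMZ}) produces a $Co^n$ orbit lying outside the target list---from the non-degenerate branch producing $dS_n$ and $AdS_n$.
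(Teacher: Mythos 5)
You should first be aware that the paper does not prove Theorem \ref{Lorentz.case} at all: it is imported wholesale from \cite{DMZ} (``fully proved in [DMZ]''), with earlier partial proofs credited to \cite{Adams1, Adams2, ADZ, kowalsky}. The only observation the paper itself makes in this direction is the one your first paragraph correctly reproduces, namely that Proposition \ref{Kowalsky} is insensitive to the signature. Everything beyond that in your proposal is an attempt to rebuild Kowalsky's classification and the \cite{DMZ} orbit identification by transposing the lightlike machinery, and this transposition has genuine gaps at precisely the steps where the paper's arguments use lightlike-specific structure.

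Concretely: (i) in Lorentz signature the vectors isotropic at $p$ form a cone, not a line, so $\mathfrak{s}_p$ is no longer a linear subspace (let alone a subalgebra), there is no characteristic foliation and no null leaf $\mathcal{N}_p$; hence the Step 1 dichotomy, which rests on $\mathfrak{aff}(\re)$ acting on the $1$-manifold $\mathcal{N}_p$, cannot even be formulated, and the conclusion $W^s_X \subset \lieg_p$ is lost. (ii) Lemma \ref{nilpotantstabilizer} is driven by the fact that an endomorphism which is both nilpotent and skew-symmetric for a \emph{Euclidean} form vanishes; this is false for a Lorentz form, since $\mathfrak{o}(1,k)$ contains nonzero nilpotents (null rotations). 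So $[\lieg, W^s_X \cap \lieg_p]$ need not be isotropic, the vectors $T_\alpha$, $T_{\alpha+\beta}$ need not land in the null cone, and the Step 2--4 ``bookkeeping'' you invoke to cap the rank at $2$ does not exist as stated; your sentence asserting this cap is a statement of the answer, not an argument. (iii) Step 5's eigenvalue argument collapses because the isotropy now sits in the non-compact $O(1,n-1)$, whose elements can have real (boost) eigenvalues. You do flag these points as needing to be ``redone by hand,'' but redoing them is exactly the content of Kowalsky's theorem and of the homogeneous-space and warped-product analysis of \cite{ADZ, DMZ}, which follow different and substantially longer lines. In short, your proposal is a reasonable roadmap with the hard part left open, whereas the paper's ``proof'' is a citation; neither your sketch nor the paper contains a self-contained argument, but the paper is explicit about that.
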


Most  developments along the article, in particular Proposition \ref{Kowalsky}, do not explicitly involve  the lightlike
nature  of the ambient metric, and  apply equally  to the Lorentz case, and by the way to the general sub-Lorentz case. This allows  one  to find a non-proper $G$-orbit $O$, i.e. with a stabilizer  algebra containing 
nilpotent elements (see the end of proof of Proposition \ref{Key}). One checks easily that $O$ can not be Riemannian. If $O$ is Lorentz, then, apply Theorem \ref{Lorentz.case} (in the homogeneous case), and if it is lightlike, then apply 
Theorem \ref{simple.transitive}. 
$\Box$

\subsubsection{Some remaining questions} The results of  \cite{DMZ} are 
 stronger than the statement of Theorem \ref{Lorentz.case}, since they contain a detailed geometric description of the Lorentz manifold $M$
 (a warped product structure...). 
 This is the missing part of Theorem \ref{simple.transitive} in the lightlike 
 non-homogeneous case and Theorem \ref{Sub-Lorentz.Theorem} in the 
 sub-Lorentz case. In particular, in this last sub-Lorentz situation, it remains to see whether the manifold is or not pure, i.e. everywhere lightlike, or everywhere Lorentz?

\end{document}